	\newcommand{\E}{E}
	\newcommand{\Au}{Au}
	\newcommand{\pp}{{\mathbb P}}
	\newcommand{\qq}{{\mathbb Q}}
	\newcommand{\witi}{\widetilde}
	\newcommand{\nn}{{\mathbb N}}
	\newcommand{\ee}{{\mathbb E}}
	\newcommand{\hh}{{\mathbb H}}
	\newcommand{\rr}{{\mathbb R}}
	\newcommand{\cc}{{\mathbb C}}
	\newcommand{\cale}{{\mathcal E}}
	\newcommand{\calr}{{\mathcal R}}
	\newcommand{\veps}{\varepsilon}
	\newcommand{\beq}{\begin{eqnarray*}}
		\newcommand{\feq}{\end{eqnarray*}}
	\newcommand{\beqn}{\begin{eqnarray}}
	\newcommand{\feqn}{\end{eqnarray}}
	\newtheorem{theorem}{Theorem}
	\newtheorem*{conj*}{Conjecture}
	\makeatletter \@addtoreset{theorem}{section}\makeatother
	\newtheorem{definition}[theorem]{Definition}
	\newtheorem{lemma}[theorem]{Lemma}
	\newtheorem*{theorema*}{Theorem~A}
	\newtheorem*{theoremb*}{Theorem~B}
	\newtheorem*{theoremc*}{Theorem~C}
	\newtheorem*{theoremd*}{Theorem~D}
	\newtheorem*{theoreme*}{Theorem~E}
	\newtheorem*{theoremf*}{Theorem~F}
	\newtheorem*{cld*}{Condition $\mbox{LD}_d$}
	\newtheorem*{theorem*}{Theorem}
	\newtheorem{proposition}[theorem]{Proposition}
	\newtheorem{corollary}[theorem]{Corollary}
	\newtheorem{remark}[theorem]{Remark}
	\newtheorem{example}[theorem]{Example}
	\def\BState{\State\hskip-\ALG@thistlm}
	\newlength\myindent
	\title{Finite automata, probabilistic method, and occurrence enumeration of a pattern in words and permutations}
	\author{Toufik~Mansour\thanks{ Department of Mathematics, University of Haifa, 199 Abba Khoushy Ave, 3498838 Haifa, Israel;
	\newline e-mail: tmansour@univ.haifa.ac.il} 
	\and 
	Reza~Rastegar\thanks{Occidental Petroleum Corporation, Houston, TX 77046 and Departments of Mathematics
	and Petroleum Engineering, University of Tulsa, OK 74104, USA - Adjunct Professor; e-mail:  reza\_rastegar2@oxy.com} 
	\and 
	Alexander~Roitershtein \thanks{Department of Statistics, Texas A\&M University, College Station, TX 77843, USA; 
	\newline e-mail: alexander@stat.tamu.edu}
	}
\begin{document}
	\maketitle
	\begin{abstract} 
	The main theme of this paper is the enumeration of the occurrence of a pattern in words and permutations. We mainly focus on asymptotic properties of the sequence $f_r^v(k,n),$ the number of $n$-array $k$-ary words that contain a given pattern $v$ exactly $r$ times. In addition, we study the asymptotic behavior of the random variable $X_n,$ the number of pattern occurrences in a random $n$-array word. The two topics are closely related through the identity $P(X_n=r) = $ $\frac{1}{k^n}f_r^v(k,n).$ In particular, we show that for any $r\geq 0,$ the Stanley-Wilf sequence $\bigl(f_r^v(k,n)\bigr)^{1/n}$ converges to a limit independent of $r,$ and determine the value of the limit. We then obtain several limit theorems for the distribution of $X_n,$ including a CLT, large deviation estimates, and the exact growth rate of the entropy of $X_n.$ Furthermore, we introduce a concept of weak avoidance and link it to a certain family of non-product measures on words that penalize pattern occurrences but do not forbid them entirely. We analyze this family of probability measures in a small parameter regime, where the distributions can be understood as a perturbation of a uniform measure. Finally, we extend some of our results for words, including the one regarding the equivalence of the limits of the Stanley-Wilf sequences, to pattern occurrences in permutations.
	\end{abstract}
	{\em MSC2010: } Primary~05A05, 05A15; Secondary~05A16, 68Q45, 60C05.\\
	\noindent{\em Keywords}: pattern occurrences, weak avoidance, finite automata, random words, Stanley-Wilf type limits, limit theorems.
	\section{Introduction and main results}
	Pattern occurrence enumeration is a central topic in modern combinatorics, see for instance the monographs \cite{bona, analcombin, HM, Kbook}. In this paper, we are primarily concerned with pattern occurrence problem for words, however, we provide the extension of certain results in the context of permutations. We define \emph{words} as finite arrays of letters from an alphabet $[k]:=\{1,\ldots,k\},$ for some given $k\in\nn.$ A pattern is any distinguished word, and occurrence of a pattern $v$ in a word $w$ is a subsequence of letters in $w$ (not necessarily consecutive) that are in the same relative order as the letters in $v.$ For instance, the word $w=37451554$ has four occurrences of the pattern $v=1332,$ namely $3**5*5*4,$ $3**5**54,$ $3****554,$ and $****1554.$ See Subsection~\ref{ns} for a more formal introduction of the concept. Occurrences of patterns in permutations are defined similarly, see the beginning of Section~\ref{astrid} for details.
	\par 
	Suppose that the alphabet $[k]$ and a pattern $v\in [k]^\ell$ are given, and that exactly $d\leq \ell$ distinct letters are used to form the pattern $v$. 
	For instance, if $k=7$ and $v=35731,$ then $\ell=5$ and $d=4.$  Our main object of interest is the frequency sequence $f_r^v(k,n),$ namely 
	the number of words in $[k]^n$ that contain the pattern $v$ exactly $r$ times. We also study the asymptotic behavior of the partial sums $g_r^v(k,n)=\sum_{j\leq r} f_r^v(k,n)$ and $X_n,$ the number of occurrences of $v$ in a random word distributed uniformly over $[k]^n$. Remark that the distribution of the random variable $X_n$ is related to the sequences $f_r^v(k,n)$ and $g_r^v(k,n)$ through the identities
	\beqn \label{fgP}
	P(X_n=r) =\frac{1}{k^n}f_r^v(k,n) \qquad \mbox{\rm and}  \qquad P(X_n\leq r) =\frac{1}{k^n}g_r^v(k,n).
	\feqn
	\par
	The starting point of our study is the celebrated Stanley-Wilf conjecture which states that the number of permutations 
	of size $n$ avoiding a pattern grows exponentially. The conjecture was settled by Marcus and Tardos \cite{MT} in 2004, 
	see \cite{wf1, fox, Kbook, wf-survey} for a review of the history and recent developments in the field. The analogue of this 
	result for the words is the convergence of the series $(f_0^v(k,n))^{1/n}$. This was proved by Br\"and\'{e}n and Mansour in \cite{BM} via a combinatorial analysis of certain finite automata that generate words avoiding a given pattern. In fact, it was shown in 
	\cite{BM} that $\lim_{n\to\infty}(f_0^v(k,n))^{1/n}=d-1,$ where $d$ is the number of distinct letters in the pattern $v.$  In Section~\ref{faust}, we generalize this result to all $r\geq 0.$ Specifically, we show the following (as stated in Theorems~\ref{thm:fnrlim_word} and~\ref{word}):
	\begin{theorema*}
	\item [(a)] For any integer $r\geq 0,$
	\beq
	\lim_{n\to \infty}\bigl(f_r^v(k, n)\bigr)^{\frac{1}{n}} = \lim_{n\to \infty}\bigl(g_r^v(k, n)\bigr)^{\frac{1}{n}}=d-1,
	\feq
	where $d$ is the number of distinct letters in the pattern $v.$
	\item [(b)] Assume that $d>1.$ Then  for any $r\geq 0,$ there exist a positive integer $M_r\in\nn$ and real constants $C_r\in (0,\infty)$ and $K_r\geq 0$ such that
	\beq
	\lim_{n\to\infty} \frac{g_r^v(k, n)}{n^{M_r}(d-1)^n}=C_r\qquad \mbox{\rm  and}\qquad \lim_{n\to\infty} \frac{f_r^v(k, n)}{n^{M_r}(d-1)^n}=K_r.
	\feq
	\end{theorema*}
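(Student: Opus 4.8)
The plan is to reduce every fixed $r$ to the avoidance case $r=0$, which is the result of \cite{BM}, by building a finite automaton that tracks the running occurrence count and then reading the asymptotics off the rational generating function it produces. The two regimes $d=1$ and $d>1$ are handled separately: when $d=1$ the pattern is a constant word $1^{\ell}$, the pigeonhole principle forces $g_r^v(k,n)=f_r^v(k,n)=0$ for all large $n$, and both limits in part~(a) equal $0=d-1$; so I focus on $d>1$.

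First I would construct, for each fixed $r$, a deterministic automaton $\mathcal{A}_r$ over $[k]$ reading a word left to right whose state records (i) the collection of currently \emph{active partial matches} of $v$, each encoded by the prefix of $v$ it realizes together with the actual letters of $[k]$ occupying the matched positions, and (ii) the number of \emph{completed} occurrences so far. Everything is capped: once the completed count exceeds $r$ the automaton enters a single absorbing overflow state, and the number of active partial matches of each (of the finitely many) type is stored only up to the threshold $r+1$. The key lemma — and the main obstacle — is to show that this truncation is faithful, i.e.\ that $\mathcal{A}_r$ has finitely many reachable non-overflow states while still deciding exactly whether a word has at most $r$ occurrences of $v$. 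This rests on a monotonicity observation: the number of completed occurrences is non-decreasing in the number of active partial matches of each type, so as soon as some type has more than $r+1$ representatives, any later letter that would complete them already triggers an overflow; hence the exact surplus need never be remembered. Making this precise — in particular verifying that simultaneous extensions of several partial matches, the creation of new ones, and the increment of the completed count all interact correctly under the cap — is the delicate point.

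Granting $\mathcal{A}_r$, let $T_r$ be its nonnegative integer transfer matrix on the non-overflow states. Then $g_r^v(k,n)$ is a fixed linear functional of $T_r^{\,n}$, so $G_r(x)=\sum_n g_r^v(k,n)x^n$ is rational, and since $f_r^v(k,n)=g_r^v(k,n)-g_{r-1}^v(k,n)$ (with $f_0=g_0$) the series $F_r$ is rational as well. To pin the exponential rate at $d-1$, note first that any word over a $(d-1)$-letter sub-alphabet automatically avoids $v$, so $g_r^v(k,n)\ge g_0^v(k,n)=f_0^v(k,n)\ge (d-1)^n$, giving the lower rate $d-1$. For the upper rate I would delete a hitting set of at most $r$ positions from a word with at most $r$ occurrences, destroying every occurrence (deleting letters cannot create new ones) and landing in a $v$-avoiding word of length $n-O(r)$; a word is recovered from its image by re-inserting at most $r$ letters, in $O(n^{r}k^{r})$ ways, so this map is at most $\mathrm{poly}(n)$-to-one and $g_r^v(k,n)\le \mathrm{poly}(n)\cdot f_0^v(k,n)$. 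Combined with the bound $\bigl(f_0^v(k,n)\bigr)^{1/n}\to d-1$ of \cite{BM}, this yields $\limsup_n\bigl(g_r^v(k,n)\bigr)^{1/n}\le d-1$, completing part~(a) for $g_r$. For $f_r$ the upper bound follows from $f_r\le g_r$, and the matching lower bound from a gadget: to an auto-avoiding backbone over $\{1,\dots,d-1\}$ one attaches a fixed block, built from letters confined to an extreme of $[k]$ so that no occurrence of $v$ straddles the two parts, contributing exactly $r$ occurrences; this gives $f_r^v(k,n)\ge c\,(d-1)^{n-O(1)}$ and hence rate $d-1$.

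Finally, for part~(b), the spectral radius of $T_r$ is a Perron eigenvalue by nonnegativity, and the estimates above force it to equal $d-1$, so the pole of $G_r$ at $x=1/(d-1)$ is dominant. A standard partial-fraction (transfer) analysis of the rational function $G_r$ then gives $g_r^v(k,n)\sim C_r\,n^{M_r}(d-1)^n$, where $M_r+1$ is the order of that pole (equivalently the size of the largest Jordan block at the dominant eigenvalue) and $C_r>0$ because the leading contribution is nonnegative and $g_r\ge (d-1)^n>0$. The same pole governs $f_r=g_r-g_{r-1}$, so $f_r^v(k,n)\sim K_r\,n^{M_r}(d-1)^n$ with $K_r\ge 0$; here $K_r=0$ is genuinely possible, corresponding to exact cancellation of the leading terms of $g_r$ and $g_{r-1}$ (so that $g_r\sim g_{r-1}$). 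That $f_r$ and $g_r$ carry the \emph{same} exponent $M_r$ needs only the monotonicity $g_r\ge g_{r-1}$, which forces $M_{r-1}\le M_r$ (with $C_{r-1}\le C_r$ when they are equal), so that $g_{r-1}$ never dominates $g_r$ polynomially.
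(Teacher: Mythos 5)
Your overall architecture (a finite automaton with capped occurrence bookkeeping, rationality of the generating function, a deletion argument for the upper rate, and a sub-alphabet bound for the lower rate) is viable and in places more elementary than the paper's, which instead builds the minimal automaton from the equivalence $w\sim_{v;r}w'$ (finiteness via Lemma~\ref{blem1}) and gets the upper rate structurally from Lemma~\ref{lemloops} (at most $d-1$ loops at each state). Your hitting-set reduction of $\limsup_n (g_r^v(k,n))^{1/n}\le d-1$ to the $r=0$ result of \cite{BM} is correct and a genuinely different, nice route. But your part (b) has a real gap: to conclude $g_r^v(k,n)\sim C_r n^{M_r}(d-1)^n$ --- an actual limit, not mere $\Theta$-growth --- you must know that $x=1/(d-1)$ is the \emph{unique} pole of $G_r$ of minimal modulus. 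Perron--Frobenius nonnegativity only places the spectral radius in the spectrum; a nonnegative transfer matrix can have further eigenvalues of modulus $d-1$ (periodic classes, e.g.\ $\bigl(\begin{smallmatrix}0&1\\1&0\end{smallmatrix}\bigr)$ has eigenvalues $\pm1$), in which case the coefficient ratio oscillates and no limit exists, so the ``standard partial-fraction analysis'' step does not go through as stated. The paper closes exactly this point by observing that its automaton's graph has no cycles other than self-loops, so $T$ is triangular and every pole of $G_r$ is a positive real $1/\lambda_i$ with $\lambda_i$ a loop count (Lemma~\ref{gf}). Your automaton actually enjoys the same property --- your capped counts are componentwise non-decreasing along any run, since partial matches never disappear, so any cycle in the state graph fixes the state vector and must be a self-loop --- but you never invoke it, and without it the existence of $C_r$ and $K_r$ is unproven.

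The second gap is your gadget for the lower bound $f_r^v(k,n)\ge c(d-1)^n$. As described (backbone over $\{1,\dots,d-1\}$, fixed block confined to ``an extreme of $[k]$''), value separation between backbone and block needs roughly $k\ge 2d-1$: for $k$ close to $d$ the block, which must use $d$ distinct letters to contain $v$ at all, necessarily reuses backbone values and straddling occurrences reappear. Even for large $k$, which concatenation order is straddle-free depends on whether $v$ admits an increasing or a decreasing ``cut'' (a split with all prefix values below, resp.\ above, all suffix values); one of the two orders always works, since no pattern has both kinds of cut, but this needs to be stated and proved. The paper sidesteps all of this for every $k\ge d$ with a sharper device: append to any word $w_0$ with exactly $r$ occurrences arbitrary letters from $\{1,\dots,v_\ell-1\}\cup\{k-d+v_\ell+1,\dots,k\}$, a $(d-1)$-letter set chosen so that no appended letter can ever serve as the \emph{final} letter of an occurrence; this yields $f_r^v(k,n)\ge c_r(d-1)^n$ with no restriction on $k$. (Both you and the paper tacitly assume existence of a word with exactly $r$ occurrences, so I do not count that against you.) Your handling of $d=1$, the rationality of $F_r$ via $f_r=g_r-g_{r-1}$, and the monotonicity deduction $M_{r-1}\le M_r$ with $K_r\ge 0$ (and possibly $K_r=0$, matching the paper's conjecture) are all correct once the dominant-pole uniqueness above is supplied.
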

	We remark that in various examples with $d>1,$ we are able to verify $K_r>0.$
	Nevertheless, we believe that it may be zero in some cases, see the discussion in Section~\ref{wftl}. \par
	We also give the following extension of this result for permutations. Let $\xi$ be a given permutation 
	pattern of size $k$ and $f_r^\xi(n)$ denote the number of permutations of size $n$ that contain $\xi$ exactly $r$ times, $r \geq 0.$ 
	We have (Theorem~\ref{thm:fnrlim} below): 
	\begin{theoremb*}
	For any $r\in\nn,$ $\lim_{n\to\infty}(f_r^\xi(n))^{\frac{1}{n}}$ exists and is equal to $\lim_{n\to\infty} (f_0^\xi(n))^{\frac{1}{n}}.$
	\end{theoremb*}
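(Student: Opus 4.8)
The plan is to sandwich $(f_r^\xi(n))^{1/n}$ between matching lower and upper bounds, both equal to the classical Stanley--Wilf limit $L:=\lim_{n\to\infty}(f_0^\xi(n))^{1/n}$, whose existence and finiteness for a pattern $\xi$ of size $k\geq 2$ I take as known (Marcus--Tardos together with Fekete's lemma). Since $f_0^\xi(n)\geq 1$ one has $L\geq 1>0$, and for every fixed $j$ one has $f_0^\xi(n-j)^{1/n}=\bigl(f_0^\xi(n-j)^{1/(n-j)}\bigr)^{(n-j)/n}\to L$. Once I establish $\liminf_n (f_r^\xi(n))^{1/n}\geq L$ and $\limsup_n (f_r^\xi(n))^{1/n}\leq L$, the squeeze forces the limit to exist and to equal $L$, which is exactly the asserted statement.

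For the upper bound I use a point-removal (hitting set) argument. Given $\pi\in S_n$ with exactly $r$ occurrences of $\xi$, I pick one position from each occurrence to form a set $H$ of at most $r$ positions meeting every occurrence; deleting the points of $\pi$ in these positions and standardizing yields a permutation $\pi'$ of size $n-|H|$ that avoids $\xi$, since any surviving occurrence would be an occurrence of $\pi$ disjoint from $H$, a contradiction. Recording $H$ together with the deleted values lets me reconstruct $\pi$ from $\pi'$, so the map $\pi\mapsto(\pi',\text{deletion data})$ is injective; as there are at most $n^{2j}$ choices of deletion data removing $j$ points, this gives $f_r^\xi(n)\leq\sum_{j=0}^r n^{2j}f_0^\xi(n-j)\leq (r+1)\,n^{2r}\max_{0\leq j\leq r}f_0^\xi(n-j)$. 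Taking $n$-th roots and letting $n\to\infty$, so that $((r+1)n^{2r})^{1/n}\to 1$, yields $\limsup_n (f_r^\xi(n))^{1/n}\leq L$.

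For the lower bound I build many permutations with exactly $r$ occurrences out of $\xi$-avoiders, and here the main subtlety is guaranteeing that no extra ``cross'' occurrences are created. I will use the dichotomy that every permutation of size $\geq 2$ is either sum-indecomposable or skew-indecomposable, since a short counting argument on the splitting indices shows it cannot be both decomposable. In the sum-indecomposable case, every occurrence of $\xi$ inside a direct sum $A\oplus B$ lies entirely in $A$ or entirely in $B$; I take the gadget $G:=\xi^{\oplus r}$, the direct sum of $r$ copies of $\xi$, which then contains $\xi$ exactly $r$ times, and I map each $\xi$-avoider $\alpha$ of size $n-kr$ to $G\oplus\alpha$. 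This permutation has exactly $r+0=r$ occurrences, and the map is injective since $G$ occupies a fixed corner, giving $f_r^\xi(n)\geq f_0^\xi(n-kr)$. In the skew-indecomposable case the identical argument runs with $\oplus$ replaced by the skew sum $\ominus$ and $G:=\xi^{\ominus r}$. In either case $\liminf_n (f_r^\xi(n))^{1/n}\geq\lim_n f_0^\xi(n-kr)^{1/n}=L$.

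Combining the two bounds yields $\lim_n (f_r^\xi(n))^{1/n}=L$ for every $r$, which is the claim. The upper bound is essentially routine bookkeeping; the step demanding the most care is the lower-bound construction, namely verifying the indecomposability dichotomy and checking that $\xi^{\oplus r}$ (respectively $\xi^{\ominus r}$) really contains $\xi$ exactly $r$ times and that forming a direct or skew sum with a $\xi$-avoider produces no cross occurrences. This is precisely where the hypothesis that $\xi$ is sum- or skew-indecomposable is used in an essential way.
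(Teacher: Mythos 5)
Your proof is correct, but it takes a genuinely different route from the paper's. The paper argues by induction on $r$: for the upper bound it deletes the leftmost letter of the leftmost occurrence, which gives $f_m^\xi(n)\leq n\sum_{j=0}^{m-1}f_j^\xi(n-1)$ and hence $\limsup_n (f_m^\xi(n))^{1/n}\leq c$ via the induction hypothesis; for the lower bound it appends a single shifted copy of $\xi$ (i.e., forms $\pi\oplus\xi$), assuming WLOG --- by passing to a symmetry of $\xi$, an idea the paper credits to Arratia --- that the letter $k$ precedes $1$ in $\xi$, which rules out cross occurrences and yields $f_{m-1}^\xi(n)\leq f_m^\xi(n+k)$. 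You avoid induction entirely and get a direct two-sided sandwich $f_0^\xi(n-kr)\leq f_r^\xi(n)\leq (r+1)\,n^{2r}\max_{0\leq j\leq r}f_0^\xi(n-j)$: your hitting-set deletion kills all $r$ occurrences at once rather than one at a time (at the cost of the polynomial bookkeeping factor $n^{2j}$, which is harmless after taking $n$-th roots), and your lower bound plants the gadget $\xi^{\oplus r}$ or $\xi^{\ominus r}$ in one shot, using the sum/skew-indecomposability dichotomy (your counting argument on splitting indices is the standard one and is sound) in place of the paper's symmetry trick. Note the two devices are close cousins: ``$k$ precedes $1$ in $\xi$'' implies $\xi$ is sum-indecomposable, and both conditions serve exactly to forbid cross occurrences across the sum boundary; the paper normalizes $\xi$ by a symmetry to reach such a position, while you case-split on which kind of indecomposability holds. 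What each approach buys: the paper's inductive step is lighter per inequality and needs no structural lemma, while yours is self-contained at fixed $r$, relates $f_r^\xi$ to $f_0^\xi$ by explicit non-asymptotic bounds, and makes the mechanism preventing spurious occurrences (indecomposability) conceptually transparent. All the delicate points --- injectivity of your deletion encoding, that deletion cannot create new occurrences, that each block of $\xi^{\oplus r}$ contributes exactly one occurrence and no cross occurrences arise with the avoider $\alpha$, and the continuity step $f_0^\xi(n-j)^{1/n}\to L$ using $L\geq 1$ --- are correctly handled, so there is no gap.
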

	In contrast to the obtained results in the context of words, we cannot describe the exact structure of Wilf-Stanley type limits as a function of the parameters $(k,\xi)$ in a general form.
	\par
	The next result turns out to be a direct implication of Theorem~A. It is stated below as Theorem~\ref{entropy}.
	\begin{theoremc*}
	If $d>1,$ then, $\lim_{n\to\infty} \frac{H_{k,v}(n)}{n} =\log\frac{k}{d-1},$ where $H_{k,v}(n)$ is the entropy of $X_n.$ 
	\end{theoremc*}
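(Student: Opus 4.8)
The plan is to deduce the statement from Theorem~A by way of the elementary identity $P(X_n=r)=k^{-n}f_r^v(k,n)$ recorded in~\eqref{fgP}. Unwinding the definition of the entropy $H_{k,v}(n)$ and inserting this identity rewrites $H_{k,v}(n)$ in terms of the counts $f_r^v(k,n)$, or equivalently, through $P(X_n\le r)=k^{-n}g_r^v(k,n)$, in terms of the partial sums $g_r^v(k,n)$. The target value should be read as $\log\frac{k}{d-1}=\log k-\log(d-1)$: the gap between the full per-letter growth rate $\log k$ of $|[k]^n|=k^n$ and the per-letter growth rate $\log(d-1)$ of the number of words carrying only a bounded number of occurrences of $v$.

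The engine of the argument is the pointwise rate computation. For each fixed $r\ge 0$,
\[
-\frac1n\log P(X_n=r)=\log k-\frac1n\log f_r^v(k,n)\longrightarrow\log k-\log(d-1)=\log\frac{k}{d-1},
\]
which is precisely Theorem~A(a); the same limit holds with $P(X_n\le r)$ and $g_r^v(k,n)$ replacing $P(X_n=r)$ and $f_r^v(k,n)$. Hence every fixed occurrence level $r$ already carries the claimed exponential rate $\log\frac{k}{d-1}$, and the remaining task is to show that this common rate governs the aggregate quantity $H_{k,v}(n)$.

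To promote the pointwise limits to a statement about $H_{k,v}(n)/n$ I would bound $H_{k,v}(n)$ between two expressions that both converge, after division by $n$, to $\log\frac{k}{d-1}$, and apply a squeeze. For the two-sided control I would use the refined asymptotics of Theorem~A(b), namely $g_r^v(k,n)\sim C_r\,n^{M_r}(d-1)^n$ and $f_r^v(k,n)\sim K_r\,n^{M_r}(d-1)^n$, which yield bounds of the form $c_1\,n^{M}(d-1)^n\le f_r^v(k,n)\le c_2\,n^{M}(d-1)^n$ on the relevant range of $r$; the monotonicity $g_{r-1}^v(k,n)\le g_r^v(k,n)\le k^n$ is convenient for estimating the contribution of the tail levels. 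After taking logarithms, dividing by $n$, and letting $n\to\infty$, the polynomial factors $n^{M_r}$ and the constants $C_r,K_r$ disappear, leaving only $\log(d-1)$, so both bounds tend to $\log k-\log(d-1)$.

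The crux, and the main obstacle, is exactly this passage from fixed-$r$ information to the whole distribution. Theorem~A controls $f_r^v(k,n)$ and $g_r^v(k,n)$ only when $r$ is held fixed while $n\to\infty$, whereas $H_{k,v}(n)$ a priori responds to all levels $r$, including those that grow with $n$. The argument must therefore confirm that the levels responsible for the leading order of $H_{k,v}(n)$ lie in the regime described by Theorem~A, and that the complementary levels contribute only $o(n)$ after normalization; the monotonicity and the uniform bounds above are the tools I would use to make this quantitative. Once this uniform control is secured, the squeeze yields $\lim_{n\to\infty}H_{k,v}(n)/n=\log\frac{k}{d-1}$. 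The hypothesis $d>1$ enters both to make $\log(d-1)$ meaningful and to place us in the setting where Theorem~A(b) furnishes the sharp polynomial-times-exponential asymptotics.
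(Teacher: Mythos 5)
Your reconstruction follows the same skeleton as the paper's own argument: rewrite $H_{k,v}(n)=n\log k-\sum_{r\ge 0}P_n(r)\log f_r^v(k,n)$ via \eqref{fgP}, and then try to replace $\frac{1}{n}\log f_r^v(k,n)$ by its fixed-$r$ limit $\log(d-1)$ supplied by Theorem~A. The obstacle you explicitly flag --- passing from fixed-$r$ information to the levels that actually carry the mass of $P_n$ --- is genuine, and it is the step your proposal never closes; your squeeze is only described, not executed. Worse, it cannot be executed: by the paper's own CLT (Theorem~\ref{wclt}), $X_n$ concentrates at $r=\Theta(n^\ell)$, far outside every fixed level, so the fixed-$r$ asymptotics of Theorem~A say nothing about the $r$'s where $P_n$ lives; at those bulk levels $f_r^v(k,n)\approx k^nP_n(r)$ with $P_n(r)$ only polynomially small, so $\frac1n\log f_r^v(k,n)\to\log k$ there, not $\log(d-1)$. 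In fact no lower bound of order $n\log\frac{k}{d-1}$ can hold at all, for the elementary reason that $X_n$ takes at most $1+\binom{n}{\ell}\le n^\ell+1$ values, whence $H_{k,v}(n)\le\log\bigl(1+\binom{n}{\ell}\bigr)=O(\log n)$ and $\lim_{n\to\infty}H_{k,v}(n)/n=0$, whereas $\log\frac{k}{d-1}>0$ since $d-1<k$.

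For comparison, the paper's proof performs exactly the interchange you were worried about: it asserts $\sum_{r}P_n(r)\frac{\log f_r^v(k,n)}{n}\to\log(d-1)$ by appeal to Theorem~\ref{thm:fnrlim_word} and ``a discrete version of the bounded convergence theorem.'' That appeal is invalid because the measures $P_n$ are not tight --- essentially all of their mass escapes to $r\to\infty$ --- so pointwise convergence of the summands at each fixed $r$ is irrelevant to the weighted sum, which in fact tends to $\log k$. So your instinct in isolating this passage as the crux was correct, and the honest verdict is that neither your proposal nor the paper's proof establishes the statement: the statement itself fails, since the support bound forces $H_{k,v}(n)/n\to 0$. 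What does survive from Theorem~A is the fixed-$r$ large-deviation interpretation $-\frac1n\log P(X_n=r)\to\log\frac{k}{d-1}$, which the paper itself notes; the entropy aggregate simply does not inherit that rate.
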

	Loosely speaking, for a given $n$, the entropy $H_{k,v}(n)$ measures the amount of uncertainty in the value of the random variable $X_n.$ Consequently, the entropy sequence $H_{k,v}(.)$ is subadditive, namely $H_{k,v}(n+m)\leq H_{k,v}(n)+ H_{k,v}(m)$ because of the dependence of pattern occurrences each of other. The convergence of $\frac{H_{k,v}(n)}{n}$ is thus ensured by Fekete's subadditivity lemma. Theorem~\ref{entropy} then gives the precise value of this limit for an arbitrary pattern $v.$ 
	\par
	In Sections~\ref{weaka} and~\ref{new} we  study the asymptotic behavior of the sequence $(X_n)_{n\in\nn}.$
	In Section~\ref{new} we obtain a central limit theorem and several related asymptotic results for the distribution of $X_n.$ 
	The following result is an analogue of the CLT for permutations obtained by B\'{o}na in \cite{bona}. The bulk of the proof is an estimation of the variance of $X_n$ referred to as $\text{VAR}(X_n).$  The latter, together with general theorems of \cite{becite} and \cite{LDPJ}, yields also a Berry-Esseen type bound for the rate of convergence and large deviation estimates stated, respectively, in Corollaries~\ref{bess} and~\ref{wsldp}. The following is the content of Theorem~\ref{wclt}.
	\begin{theoremd*}
	Let $\mu_n=E(X_n)$ and $\sigma_n=\sqrt{\text{VAR}(X_n)}.$ Then $\mu_n=\binom{n}{\ell}\binom{k}{d}\frac{1}{k^\ell},$
	$\sigma_n\sim\bigl(\frac{\mu_n}{\sqrt{n}}\bigr),$ and $\frac{X_n-\mu_n}{\sigma_n}$ converges in distribution, as $n\to\infty,$
	to a standard normal random variable.
	\end{theoremd*}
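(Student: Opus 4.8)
The plan is to write $X_n$ as a sum of indicators of potential occurrences and to combine a sharp estimate of its first two moments with a general normal-approximation theorem for locally dependent sums. For each $\ell$-subset $I=\{i_1<\cdots<i_\ell\}\subseteq[n]$, let $Y_I$ be the indicator of the event $A_I$ that the letters $w_{i_1},\dots,w_{i_\ell}$ of a uniformly random word $w\in[k]^n$ reproduce the relative order of $v$, so that $X_n=\sum_I Y_I$. A string of $\ell$ i.i.d.\ uniform letters is order-isomorphic to $v$ precisely when the $d$ distinct values it uses are chosen (in $\binom{k}{d}$ ways) and then assigned to the symbols of $v$ in the unique order-preserving manner; hence $E(Y_I)=\binom{k}{d}k^{-\ell}$ for every $I$, and summing over the $\binom{n}{\ell}$ subsets yields the stated mean $\mu_n=\binom{n}{\ell}\binom{k}{d}k^{-\ell}$.

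For the variance I would exploit that $Y_I$ and $Y_J$ are independent whenever $I\cap J=\emptyset$, since they then depend on disjoint blocks of i.i.d.\ letters. Thus $\text{VAR}(X_n)=\sum_{I,J\,:\,I\cap J\neq\emptyset}\text{Cov}(Y_I,Y_J)$, and I would group the ordered pairs $(I,J)$ by their overlap $t=|I\cap J|\in\{1,\dots,\ell\}$. There are $\Theta(n^{2\ell-t})$ pairs with overlap $t$ and each covariance is bounded by $1$, so the diagonal and all $t\ge 2$ terms contribute only $O(n^{2\ell-2})+O(n^{\ell})$, and the sum is controlled by the single-overlap block $t=1$, of size $\Theta(n^{2\ell-1})$. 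Evaluating that block reduces to fixing the shared position $p$, conditioning on $w_p$ (which makes $A_I$ and $A_J$ conditionally independent) and summing the resulting covariances over the admissible pairs of roles that $p$ may occupy in the two occurrences. I expect the outcome to be $\text{VAR}(X_n)\sim c\,n^{2\ell-1}$ with $c=c(k,v)>0$, i.e.\ $\sigma_n$ exactly of the order $\mu_n/\sqrt{n}$ recorded in the statement.

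The main obstacle is the strict positivity of this constant $c$. The single-overlap block is an alternating sum of positive and negative covariances — the sign being governed by whether the shared letter plays a relatively low or high role in each of the two occurrences — and one must exclude a complete cancellation. That this is a genuine danger is visible already for the degenerate pattern $v=11$: there the single-overlap covariances vanish identically, the variance falls to order $n^2$, and $(X_n-\mu_n)/\sigma_n$ converges to a non-Gaussian limit of generalized $\chi^2$ type rather than to a normal one. I would therefore carry out the role-by-role bookkeeping in detail and invoke the non-degeneracy of $v$ (in particular $d\ge 2$) to certify $c>0$; this is the one place where the fine combinatorial structure of the pattern is essential.

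With the variance bound in hand, the central limit theorem follows from a Stein/dependency-graph normal-approximation estimate of Baldi--Rinott type, namely the general result of \cite{becite}. The family $\{Y_I\}$ carries the dependency graph in which $I\sim J$ iff $I\cap J\neq\emptyset$; it has $N=\binom{n}{\ell}=O(n^{\ell})$ vertices, maximal degree $D=O(n^{\ell-1})$, and summands bounded by $1$. Substituting $\sigma_n=\Theta(n^{\ell-1/2})$, $N=O(n^{\ell})$ and $D=O(n^{\ell-1})$ into the Berry--Esseen bound, whose leading terms are of the form $ND^2/\sigma_n^3$ and $\sqrt{N}\,D^{3/2}/\sigma_n^2$, produces an estimate of order $n^{-1/2}$. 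This simultaneously gives the asserted convergence of $(X_n-\mu_n)/\sigma_n$ to a standard normal variable and the convergence rate of Corollary~\ref{bess}; the large-deviation bounds of Corollary~\ref{wsldp} then follow from the general theorem of \cite{LDPJ}.
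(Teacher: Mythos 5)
Your skeleton coincides with the paper's proof of Theorem~\ref{wclt}: the same decomposition of $X_n$ into indicators over $\ell$-subintervals, the same computation $E(X_{n,j})=\binom{k}{d}k^{-\ell}$ yielding \eqref{ex}, and the same overlap stratification of the second moment --- the paper writes $X_n^2=\sum_{s}A_s$, shows $E(A_0)-\mu_n^2=-\Theta(n^{2\ell-1})$ in \eqref{amount} and $E(A_s)=\Theta(n^{2\ell-s})$ for $s\geq 2$ in \eqref{a14}, and makes the single-overlap block carry the variance, which is exactly your covariance bookkeeping, since for i.i.d.\ letters disjoint index sets give independent indicators. The one genuinely different choice is the normality step: the paper follows B\'{o}na \cite{bona} and applies Janson's semi-invariant CLT (Theorem~2 of \cite{janson}) once the variance estimate \eqref{jvk} is in hand, obtaining the $O(n^{-1/2})$ rate separately from \cite{becite} in Corollary~\ref{bess}, whereas you invoke Rinott's dependency-graph theorem directly; your parameter count ($N=O(n^{\ell})$, maximal degree $D=O(n^{\ell-1})$, $\sigma_n=\Theta(n^{\ell-1/2})$, summands bounded by one) is correct, so this route is valid and somewhat more economical, delivering the CLT and the Berry--Esseen rate in a single application.

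The genuine gap is the step you yourself flag: strict positivity of the leading variance constant is left as a plan (``role-by-role bookkeeping'') rather than executed, and it is precisely the nontrivial content of the theorem --- without it you have only $\sigma_n^2=O(n^{2\ell-1})$, not the matching lower bound on which all three conclusions rest. Your diagnosis of the danger is accurate: your example $v=11$ has $d=1$, the conditional probability $t\mapsto P(A_I\mid w_p=t)$ is then constant, every single-overlap covariance vanishes, and the normal limit fails, which is consistent with the paper's standing assumption $d>1$. The paper closes this gap with a one-sided bound that sidesteps the sign cancellation you worry about: since each term $E(X_{n,j}X_{n,m})$ is nonnegative, it lower-bounds $E(A_1)$ by restricting to pairs in which the shared position plays the \emph{same} role $i+1$ in both occurrences; summing over the value $t$ of the shared letter then produces $\sum_{t}\bigl\{\binom{t-1}{p-1}\binom{k-t}{d-p}\bigr\}^2$, which is bounded below via Cauchy--Schwarz together with the Chu--Vandermonde identity $\sum_{t}\binom{t-1}{p-1}\binom{k-t}{d-p}=\binom{k}{d}$, giving the explicit estimate \eqref{a11}; an explicit numerical comparison against the negative term \eqref{amount} then yields $\delta_{k,v}>0$ in \eqref{varx}--\eqref{dex}. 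If you complete your proposal, this same-role restriction --- equivalently, showing that the conditional occurrence probability given the shared letter is non-constant when $d\geq 2$, so its variance is strictly positive --- is the computation you must supply; note that mere nonnegativity via Jensen is not enough, because the deficit $\mu_n^2-E(A_0)$ also charges the mixed-role pairs whose contributions you discard, so the comparison is genuinely quantitative.
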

	For a pattern of length $\ell,$ there are $\binom{n}{\ell}$ places in a word $w\in [k]^n$ where the pattern might occur. Enumerate them in an arbitrary way, and let $X_{n,i}(w)$ be the indicator of the event that the pattern occurs at the $i$-th place in $w.$ Choose a parameter $x\in [0,1]$ and consider the following partition function penalizing the occurrences of $v:$ 
	\beq
	c^v_{k,n}(x) =\sum_{w\in[k]^n} \prod_{i=1}^{\binom{n}{\ell}} \bigl(1-xX_{n,i}(w)\bigr)=\sum_{w \in [k]^n} (1-x)^{occ_v(w)}=\sum_{r\geq 0} f_r^v(k,n)(1-x)^r.
	\feq  
	Using this partition function, one can construct a Boltzmann distribution on $[k]^n$ as follows:  
	\beq
	\qq^{v,x}_{k,n}(A)=\frac{1}{c^v_{k,n}(x)}\sum_{w \in A}(1-x)^{occ_v(w)},\qquad A\subset [k]^n.
	\feq
	The probability measure $\qq^{v,x}_{k,n}(\,\cdot\,)$ penalizes words $w$ with a non-zero $occ_v(w)$ with the factor $(1-x)^{occ_v(w)},$ 
	but unless $x=1$ it doesn't forbid them completely. We refer to a random word $w$ distributed according to $\qq^{v,x}_{k,n}$ as \emph{weakly avoiding} the pattern $v.$ The construction and the terminology are inspired by their analogue in the theory of self-avoiding walks, where a similar construction 
	is used to penalize self-intersection of the path of a random walk and introduce weakly self-avoiding walks \cite{saw}. Similar construction for permutations is outlined in Section~\ref{wa-gf}. In the case of permutations and the inversion pattern $21,$ the above probability measure is a Mallow's distribution. Mallow's permutations have been studied by many authors, see, for instance, recent work \cite{crane, mall, pitman} and references therein. 
	\par
	We remark that when $x=0,$ the above results for $X_n$ hold under $\qq^{v,x}_{k,n}$ as $\qq^{v,x}_{k,n}$ is the uniform distribution over $[k]^n$. One would then expect that for a sequence $(x_n)_{n\in\nn}$ decaying to zero sufficiently fast, similar limit theorems hold for $\qq^{v,x_n}_{k,n}$. Indeed, by using perturbation techniques we prove this the following (see Theorem~\ref{inv1}):
	\begin{theoreme*} 
	The following holds for any $t\in\rr$ and a sequence of positive reals $(\rho_n)_{n\in\nn} $ such that
	$\gamma:=\lim_{n\to\infty} \frac{n^\ell}{\rho_n}\in [0,+\infty):$
	\item [(a)] $\lim_{n\to\infty} \ee^{v,\frac{1}{\rho_n}}_{k,n}(e^{\frac{tX_n}{n^\ell}})=\exp\Bigl[\frac{t}{k^\ell\ell !}\binom{k}{d}\Bigr].$
	\item [(b)] $\lim_{n\to\infty} \frac{1}{\sqrt{n}}\log\ee^{v,\frac{1}{\rho_n}}_{k,n}(e^{\frac{tX_n\sqrt{n}}{n^\ell}})=J_{k,v}t,$
	where $J_{k,v}$ are strictly positive constants.
	\item [(c)] Let $\qq_n(r)=\qq^{v,\frac{1}{\rho_n}}_{k,n}(X_n=r)$ and $\hh_n=-\sum_{r\geq 0}\qq_n(r)\log \qq_n(r)$
	be the entropy of $X_n$ under the law $\qq^{v,\frac{1}{\rho_n}}_{k,n}.$ Then 
	$\lim_{n\to\infty} \frac{\hh_n}{n}=\log\frac{k}{d-1}+\frac{\gamma}{k^\ell\ell !}\binom{k}{d}.$
	\end{theoreme*}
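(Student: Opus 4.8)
The plan is to reduce all three parts to the behaviour of the cumulant generating function of $X_n$ under the uniform law, fed by the first two moment asymptotics of Theorem~D. Write $\qq_n=\qq^{v,1/\rho_n}_{k,n}$, let $\ee_n$ and $\ee$ denote expectation under $\qq_n$ and under the uniform law $\qq^{v,0}_{k,n}$, and put $\Lambda_n(s)=\log\ee\bigl[e^{sX_n}\bigr]$ and $a_n=\log(1-1/\rho_n)$. Since $\gamma$ is finite and $\ell\ge1$ we have $\rho_n\to\infty$ and $a_n=-1/\rho_n+O(1/\rho_n^2)$, and the identity driving everything is the change of measure
\[
\log\ee_n\bigl[e^{sX_n}\bigr]=\Lambda_n(s+a_n)-\Lambda_n(a_n),\qquad s\in\rr .
\]
From Theorem~D I record $\Lambda_n(0)=0$, $\mu_n=\Lambda_n'(0)\sim\lambda n^\ell$ and, via $\sigma_n\sim\mu_n/\sqrt n$, $\sigma_n^2=\Lambda_n''(0)\sim\lambda^2 n^{2\ell-1}$, where $\lambda=\frac{1}{k^\ell\ell!}\binom{k}{d}$; the CLT moreover forces $X_n/n^\ell\to\lambda$ in probability because $\sigma_n/n^\ell\to0$.

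For part (a) I would bypass the generating function and argue by bounded convergence. With $s=t/n^\ell$,
\[
\ee_n\bigl[e^{tX_n/n^\ell}\bigr]=\frac{\ee\bigl[e^{tX_n/n^\ell}(1-1/\rho_n)^{X_n}\bigr]}{\ee\bigl[(1-1/\rho_n)^{X_n}\bigr]},
\]
and both factors in the integrand are uniformly bounded, since $0\le X_n\le\binom{n}{\ell}$ gives $e^{tX_n/n^\ell}\le e^{|t|/\ell!}$ and $(1-1/\rho_n)^{X_n}\le1$. Moreover $e^{tX_n/n^\ell}\to e^{t\lambda}$, and writing $X_n/\rho_n=(X_n/n^\ell)(n^\ell/\rho_n)\to\lambda\gamma$ in probability gives $(1-1/\rho_n)^{X_n}=e^{X_n\log(1-1/\rho_n)}\to e^{-\lambda\gamma}$. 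Bounded convergence then sends the numerator to $e^{t\lambda}e^{-\lambda\gamma}$ and the denominator to $e^{-\lambda\gamma}$, so the ratio tends to $e^{t\lambda}=\exp\bigl[\frac{t}{k^\ell\ell!}\binom{k}{d}\bigr]$.

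For part (b) the scaling $s=t/n^{\ell-1/2}$ makes $e^{sX_n}$ explode ($sX_n\approx t\lambda\sqrt n$), so I would exploit the change-of-measure identity and Taylor expand $\Lambda_n$ about $0$. Since $\Lambda_n(0)=0$ and $\Lambda_n'(0)=\mu_n$,
\[
\Lambda_n(s+a_n)-\Lambda_n(a_n)=\mu_n s+Q_n,\qquad Q_n=\int_{a_n}^{a_n+s}\!\!\int_0^{\theta}\Lambda_n''(u)\,du\,d\theta ,
\]
and the linear term already yields $\tfrac{1}{\sqrt n}\mu_n s=\tfrac{t\mu_n}{n^\ell}\to t\lambda$. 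It thus suffices to show $Q_n=o(\sqrt n)$, in fact $Q_n=O(1)$. The crude bound $\Lambda_n''\le\binom{n}{\ell}^2$ only gives $Q_n=O(n)$, which is too weak; the heart of the matter is the uniform second-order estimate $\Lambda_n''(u)=\mathrm{Var}_u(X_n)=O(n^{2\ell-1})$ for $u$ in the $O(n^{-(\ell-1/2)})$-neighbourhood of $0$ swept out by the integral, which together with $|s|=O(n^{-(\ell-1/2)})$ gives $|Q_n|=O(n^{2\ell-1})s^2=O(1)$. The limit is then $t\lambda$, identifying $J_{k,v}=\lambda=\frac{1}{k^\ell\ell!}\binom{k}{d}>0$. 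Proving this uniform tilted-variance bound — an extension of the variance analysis behind Theorem~D, or equivalently a bound on the third cumulant of $X_n$ near $0$ — is the main obstacle of the whole theorem.

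For part (c) I would connect the tilted entropy $\hh_n$ to the uniform entropy $H_{k,v}(n)$ of Theorem~C through the Gibbs change of measure. With $\beta_n=-a_n\sim1/\rho_n$, $M_n=\ee[e^{-\beta_nX_n}]$ and $p_r=\pp(X_n=r)$, the relation $\qq_n(r)=p_re^{-\beta_nr}/M_n$ gives the exact decomposition
\[
\hh_n=H_{k,v}(n)-\bigl(\ee_n-\ee\bigr)\bigl[\log p_{X_n}\bigr]+\beta_n\ee_n[X_n]+\log M_n .
\]
Theorem~C supplies the leading rate $H_{k,v}(n)/n\to\log\frac{k}{d-1}$, so the task reduces to extracting the excess rate $\frac{\gamma}{k^\ell\ell!}\binom{k}{d}=\lambda\gamma$ from the remaining terms, which I would evaluate using $\ee_n[X_n]=\Lambda_n'(a_n)$, $\log M_n=\Lambda_n(a_n)$, the part~(b) expansion, and the concentration $\beta_nX_n\to\lambda\gamma$ from part~(a). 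The delicate point here is that entropy is a global functional of the law of $X_n$ and is not controlled by its Gaussian bulk alone; matching the excess rate and bounding the cross term $(\ee_n-\ee)\bigl[\log p_{X_n}\bigr]$ requires combining Theorem~C with the large-deviation and variance estimates developed for part~(b).
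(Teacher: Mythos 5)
Your part (a) is correct and complete, and in substance it is the paper's own route: the paper factors the change of measure through the identity \eqref{eqq} and Lemma~\ref{cvclt}(a), whose proof is the same Taylor-plus-concentration computation you carry out directly, so your bounded-convergence version is sound (the concentration $X_n/n^\ell\to\lambda$ and $X_n/\rho_n\to\lambda\gamma$ both follow from Theorem~\ref{wclt}, and $\rho_n\to\infty$ is forced by the hypothesis). Part (b), however, has the gap you yourself flag, and it is genuine: nothing in Theorem~\ref{wclt} controls the tilted variance $\Lambda_n''(u)=\mathrm{Var}_u(X_n)$ for $u\neq 0$, and no uniform estimate $\Lambda_n''(u)=O(n^{2\ell-1})$ on the window $|u|=O(n^{1/2-\ell})$ is available off the shelf, so as written your argument for (b) is an unproven reduction. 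The paper avoids tilting altogether: in Lemma~\ref{cvclt}(b) it upgrades the CLT to convergence of the moment generating functions of $(X_n-\mu_n)/\sigma_n$ (citing \cite{ccurtiss} and \cite{janson}), so that $\log E(e^{tX_n/\sigma_n})=t\mu_n/\sigma_n+t^2/2+o(1)$; dividing by $\sqrt n$ kills the quadratic term, and the measure change is absorbed as in (a). Adopting that step closes your gap with no tilted-variance lemma. Note also that your value $t\lambda$ with $\lambda=\frac{1}{k^\ell\ell!}\binom{k}{d}$ is the correct limit for the stated scaling $\sqrt{n}/n^\ell$, and it exposes a real inconsistency in the paper: the constant $J_{k,v}=\lim_n\mu_n/(\sigma_n\sqrt n)$ of \eqref{jvk} equals $\lambda$ only if $\sigma_n\sim n^{\ell-1/2}$ with constant one, which is false in general (for $v=12$, $k=2$ one computes $\sigma_n^2\sim n^3/48$, giving $J_{k,v}=\sqrt{3}/2$ while $\lambda=1/8$); the paper's proof works at scale $1/\sigma_n$ and silently conflates it with $\sqrt n/n^\ell$.

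For part (c) the obstacle is not your cross term but the statement itself: since $0\leq X_n\leq\binom{n}{\ell}$, the law of $X_n$ is supported on at most $1+\binom{n}{\ell}\leq (n+1)^\ell$ points, so $\hh_n\leq \ell\log(n+1)+O(1)$ under any probability measure, and $\hh_n/n\to 0$, contradicting the claimed positive limit; the same support bound falsifies Theorem~\ref{entropy}, which your plan takes as input. The flaw in the paper's proof (a direct three-term decomposition of $\hh_n$ via $\qq_n(r)=f_r^v(k,n)(1-x)^r/c^v_{k,n}(x)$, rather than your decomposition relative to the uniform entropy) is the ``discrete bounded convergence'' step: $\frac1n\log f_r^v(k,n)\to\log(d-1)$ holds for each \emph{fixed} $r$, but $\qq_n$ concentrates at $r\approx\lambda n^\ell\to\infty$, where $\log f_r^v(k,n)=n\log k+\log\pp_n(r)$; since $\ee_n[\log \pp_n(X_n)]=-\hh_n-D(\qq_n\Vert \pp_n)=O(\log n)$, one gets $\frac1n\ee_n[\log f^v_{X_n}(k,n)]\to\log k$, not $\log(d-1)$, and with this correction all order-$n$ terms in the paper's decomposition cancel, consistently with the support bound. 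Your instinct that the entropy ``is not controlled by its Gaussian bulk alone'' was pointing at exactly this failure, and your proposed cross-term analysis would inherit it through the term $\ee_n[\log p_{X_n}]$: neither your plan nor the paper's argument can prove (c), because (c) is false as stated.
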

	
	Note that in the context of permutations, somewhat similar perturbative regimes for Mallow's permutations were recently studied in \cite{nora, mall,starr}. \par
	
	Another interesting result closely related to Theorem~D (Theorem~\ref{wclt} below) is a limit theorem dealing with a Poisson approximation of $X_n$ in the case when $d=d_n$ is a rapidly increasing function of $n.$ The result is an analogue for random words of \cite[Theorem~3.1]{crane} for random permutations, it is stated below as Theorem~\ref{wpois}.
	\begin{theoremf*}
	Suppose that sequences of natural numbers $(k_n)_{n\in\nn},$ $(\ell_n)_{n\in\nn},$ and $(d_n)_{n\in\nn}$ satisfy the following condition:
	\begin{itemize}
	\item[] There exist constants $A>0$ and $\beta>\frac{2}{2+\delta}$ such that $\min\{k_n,\ell_n\}\geq d_n \geq An^\beta$ for all $n\in \nn,$ 
	where $\delta=\liminf_{n\to\infty}\frac{d_n}{\ell_n}.$ 
	\end{itemize}
	Consider an arbitrary sequence of patterns $v_n\in [k_n]^{\ell_n},$ $n\in\nn,$ with $d_n$ distinct letters used to form $v_n.$
	Let $X_n=occ_{v_n}(W_n),$ where $W_n$ is drawn at random from $[k_n]^n.$ Then
	\beq
	\lim_{n\to\infty}\Bigl|\frac{f_r^{v_n}(k_n,n)}{k_n^n}-\frac{\mu_n^r e^{-\mu_n}}{r!}\Bigr|=0,
	\feq
	for any integer $r\geq 0.$
	\end{theoremf*}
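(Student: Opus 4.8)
The plan is to read the claim probabilistically and reduce it to a total‑variation statement. By the identity~\eqref{fgP} we have $f_r^{v_n}(k_n,n)/k_n^n=P(X_n=r)$, so the assertion says precisely that the law of $X_n$ is asymptotically Poisson with mean $\mu_n=E(X_n)$, pointwise in $r$. I would obtain this from the stronger estimate $d_{TV}\bigl(\mathcal L(X_n),\mathrm{Poi}(\mu_n)\bigr)\to 0$, which dominates every difference $|P(X_n=r)-\mu_n^r e^{-\mu_n}/r!|$ uniformly in $r$. The tool of choice is the Chen--Stein method in the Arratia--Goldstein--Gordon form, applied to $X_n=\sum_{i=1}^{\binom{n}{\ell}}X_{n,i}$, where $X_{n,i}$ is the occurrence indicator (introduced just before the partition function $c^v_{k,n}$) that $v_n$ occurs at the $i$-th placement $\pi_i\in\binom{[n]}{\ell}$.

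Since the letters of $W_n$ are i.i.d.\ uniform, $X_{n,i}$ depends only on the letters at the positions of $\pi_i$ and is therefore independent of $\{X_{n,j}:\pi_j\cap\pi_i=\varnothing\}$. Choosing the dependence neighbourhood $B_i=\{j:\pi_j\cap\pi_i\neq\varnothing\}$ makes the third Chen--Stein term vanish, $b_3=0$, leaving $d_{TV}\bigl(\mathcal L(X_n),\mathrm{Poi}(\mu_n)\bigr)\le 2(b_1+b_2)$. With $p:=E(X_{n,i})=\binom{k}{d}k^{-\ell}$ and $\mu_n=\binom{n}{\ell}p$ (matching Theorem~D), one finds
\[ b_1=\binom{n}{\ell}\Bigl(\binom{n}{\ell}-\binom{n-\ell}{\ell}\Bigr)p^2=\mu_n^2\Bigl(1-\binom{n-\ell}{\ell}\big/\binom{n}{\ell}\Bigr)\le \mu_n^2. \]
The decisive analytic input is that $\mu_n\to 0$: a Stirling estimate gives $\mu_n\le e^{\ell+d}\bigl(n/(\ell d)\bigr)^{\ell}$, and since $\ell d\ge d^2\ge A^2 n^{2\beta}$ with $\beta>\tfrac12$ the base decays polynomially while the exponent $\ell\ge An^{\beta}$ diverges, so $\log\mu_n\to-\infty$. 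Hence $b_1\to 0$ immediately, and the same decay gives $P(X_n\ge 1)\le\mu_n\to 0$.

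The heart of the proof, and the step I expect to be hardest, is the second term
\[ b_2=\sum_i\sum_{\substack{j\in B_i\\ j\neq i}}E(X_{n,i}X_{n,j})=\binom{n}{\ell}\sum_{m=1}^{\ell-1}\binom{\ell}{m}\binom{n-\ell}{\ell-m}\,\overline{p}_m, \]
where $\overline{p}_m$ is a uniform bound on the probability that two placements overlapping in exactly $m$ positions are both occurrences of $v_n$. The difficulty is to make $\overline{p}_m$ sharp in its joint dependence on the overlap $m$ and on the ratio $d/\ell$: conditioning on the letters of the first placement pins the values on the $m$ shared positions, and the number of completions of the second placement to an occurrence is controlled by how many of the $d$ distinct pattern values remain free --- this is exactly where $\delta=\liminf d_n/\ell_n$ enters. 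A crude bound such as $\overline{p}_m\le p^2 k^{m}$ already suffices over a wide range, but to reach the stated threshold one must track this combinatorial completion count, locate the dominant summand of the sum over $m$, and balance the binomial factors $\binom{\ell}{m}\binom{n-\ell}{\ell-m}$ against $\overline{p}_m$ and $\binom{n}{\ell}$; the exponent at which the largest term still vanishes emerges as $\beta=\tfrac{2}{2+\delta}$, the source of the hypothesis. Once $b_2\to 0$ is secured, the Chen--Stein inequality yields $d_{TV}\bigl(\mathcal L(X_n),\mathrm{Poi}(\mu_n)\bigr)\to 0$, and the stated pointwise limit follows for each fixed $r\ge 0$.
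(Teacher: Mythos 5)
Your route is the paper's route: the Arratia--Goldstein--Gordon form of the Chen--Stein method applied to the occurrence indicators $X_{n,i}$, dependency neighbourhoods given by overlapping placements (so $b_3=0$), the bound $b_1\le\mu_n^2$, and total-variation convergence to $\mathrm{Poi}(\mu_n)$ implying the stated pointwise limit. The problem is that the decisive step --- $b_2\to 0$ --- is only announced, not proved. You correctly identify it as ``the heart of the proof,'' describe a programme (bound $\overline{p}_m$, locate the dominant summand, balance the binomials), and then assert that carrying it out ``emerges as $\beta=\tfrac{2}{2+\delta}$''; but this is precisely the content of the theorem's hypothesis, so nothing has been verified. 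Moreover your only concrete estimate, $\overline{p}_m\le p^2k^m$, agrees with the correct conditional bound $E(X_{n,i}X_{n,j})\le k_n^{-\ell_n}\binom{k_n}{d_n}\cdot k_n^{-(\ell_n-m)}\binom{k_n}{(\ell_n-m)\wedge d_n}$ only in the regime $\ell_n-m\ge d_n$, and for large overlaps it is weaker than even the trivial bound $p$ (since $\binom{k_n}{d_n}k_n^{m-\ell_n}$ can be enormous when $m$ is close to $\ell_n$), so the crude bound genuinely fails exactly where the factorial savings are needed. What the paper does to close this: using $\binom{k}{s}k^{-s}\le 1/s!$ and the split $\frac{1}{((\ell_n-m)\wedge d_n)!}\le\frac{1}{d_n!}+\frac{1}{(\ell_n-m)!}$, it reduces (via Vandermonde's identity) the sum over overlaps to $\mu_n K_n\bigl(\frac{1}{d_n!}+E\bigl(\frac{1}{\Lambda_n!}\bigr)\bigr)$ with $\Lambda_n$ hypergeometric; Hoeffding's inequality gives $P(\Lambda_n\le(1-2\veps)\ell_n)\le e^{-2\veps^2\ell_n}$, leading to $b_1+b_2\le 4K_n^2e^{-2\veps^2\ell_n}/d_n!$, and Stirling's formula then yields $\log(b_1+b_2)\le 2\ell_n\log n-(2+\gamma)\ell_n\log\ell_n+O(n)\to-\infty$ for any $\gamma<\delta$, exactly when $\beta>\frac{2}{2+\delta}$. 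Some quantitative mechanism of this kind (factorial decay in the completion count plus concentration of the overlap) is indispensable; without it your proof is a correct frame around a missing core.

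A secondary error: your Stirling bound $\mu_n\le e^{\ell+d}\bigl(n/(\ell d)\bigr)^{\ell}$ is not justified. The standard estimates give $\binom{n}{\ell}\le(en/\ell)^{\ell}$ and $\binom{k}{d}k^{-\ell}\le\binom{k}{d}k^{-d}\le(e/d)^{d}$, i.e.\ $\mu_n\le(en/\ell_n)^{\ell_n}(e/d_n)^{d_n}$ --- a factor $d_n^{-d_n}$, not the $d_n^{-\ell_n}$ your display claims --- so your argument via ``$\ell d\ge A^2n^{2\beta}$ with $\beta>\tfrac12$'' does not stand as written. The conclusion $\mu_n\to0$ is nonetheless true under the hypotheses: with $d_n\ge\gamma\ell_n$ eventually ($\gamma<\delta$) one gets $\log\mu_n\le\ell_n\log n-(1+\gamma)\ell_n\log\ell_n+O(\ell_n)\to-\infty$ as soon as $\beta>\frac{1}{1+\delta}$, and $\frac{2}{2+\delta}>\frac{1}{1+\delta}$ for $\delta>0$; this is also subsumed by the paper's bound on $b_1+b_2$, which already controls $b_1\le\mu_n^2$ without a separate estimate for $\mu_n$.
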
	
	The paper is structured as follows. Section~\ref{limits} is devoted to pattern occurrences in words. 
	The framework is formally introduced in Section~\ref{ns}. In Section~\ref{faust} we study the sequences $f_r^v(k,n)$ and $g_r^v(k,n),$ $r\geq 0.$ The generating functions are explicitly computed for several examples using the automata approach and the transfer matrix method. The Stanley-Wilf limits of $f_r^v(k,n)$ and $g_r^v(k,n)$ are studied in Section~\ref{wftl}. Section~\ref{weaka} is devoted
	to the study of words weakly avoiding a pattern. Section~\ref{new} contains various limit theorems for the distribution of the random variable $X_n.$ Finally, within the framework of permutations the Stanley-Wilf type limits and words weakly avoiding a pattern are discussed in Section~\ref{astrid}.
	 	
	\section{Pattern occurrences in words}
	\label{limits}	
	In this section we focus on pattern occurrences in words and study the asymptotic behavior of $f_r^v(k, n)$ and $X_n.$ 
	The section is divided into five subsections. We begin with notation. Section's organization is discussed in more detail 
	at the end of Section~\ref{ns}.  
	\subsection{Notation and settings}
	\label{ns} 
	Let $\nn$ and $\nn_0$ denote, respectively, the set of natural numbers and the set of non-negative integers, that is $\nn_0=\nn \cup \{0\}.$ For a given set $A,$ $\#A$ is the cardinality of $A.$ For any given
	$k\in \nn,$ we denote the set $\{1,2, \cdots k\}$ by $[k]$  and refer to it as an \emph{alphabet} and to its elements as \emph{letters}. A \emph{word} of length $n,$ is an element of $[k]^n,$ $n\in\nn.$ A \emph{language} $[k]^{*} := \cup_{n=0}^{\infty} [k]^n$ is the set of all words compound of letters in an alphabet $[k].$ We adopt the convention that $[k]^0=\{\epsilon\},$ where $\epsilon$ is an empty word. For any $A\subset \nn_0$ we denote by $[k]^A$ the union $\cup_{j\in A} [k]^j.$ For instance, $[k]^{\geq n}=\cup_{j\geq n} [k]^j$ and $[k]^{\leq n}=\cup_{j\leq n} [k]^j.$ We write a word $w\in [k]^n$ in the form $w=w(1)\cdots w(n),$ where $w(i)$ is the $i$-the letter of $w.$ The \emph{concatenation} of two words $w\in [k]^n$ and $v\in [k]^m$ is the word
	$wv:=w(1)\cdots w(n)v(1)\cdots v(m).$ For instance, the concatenation of $w=20$ and $v=19$ is $wv=2019.$  A \emph{pattern} is any distinguished word in the underlying language $[k]^*.$
	\par
	Let us now fix integers $k>0,$ $\ell\geq 2,$ and a pattern $v$ in $[k]^\ell.$ These parameters are considered to be given and fixed throughout the rest of Section~\ref{limits}. An important characteristic of the pattern turns out to be the number of distinct letters used to compound it. We will denote this number by $d.$ For instance, if $v=33415,$ then $\ell=5$ and $d=4.$
	\par
	For a word $w\in [k]^n$ with $n\geq \ell,$ an occurrence of the pattern $v$ in $w$ is a sequence of $\ell$ indices
	$1\leq j_1<j_2<\dots<j_\ell\leq n $ such that the \emph{subword} $w(j_1)\cdots w(j_\ell)\in [k]^\ell$
	is order-isomorphic to the word $v,$ that is
	\beq
	w(j_p)<w(j_q)\Longleftrightarrow  v_p<v_q\qquad \forall\,1\leq p,q\leq \ell
	\feq
	and
	\beq
	w(j_p)=w(j_q)\Longleftrightarrow  v_p=v_q\qquad \forall\,1\leq p,q\leq \ell.
	\feq
	For a word $w \in [k]^*,$ we denote by $occ_v(w)$ the number of occurrences of $v$ in $w.$ For instance, if $v$ is the \emph{inversion} $21$
	and $w=35239,$ then $occ_v(w)=3$ (for the following three occurrences of pairs of letters which appear in the reverse order: $w(1)w(3)=32,$ $w(2)w(3)=52,$ and $w(2)w(4)=53$). We say that a word $w\in [k]^*$ \emph{contains the pattern} $v$ exactly $r$ times, $r\in\nn_0,$ if $occ_v(w)=r.$  For $r\in \nn_0,$ we denote by $f_r^v(k,n)$ and $g_r^v(k,n),$ the number of words in $[k]^n$ that contain $v,$ respectively, exactly $r$ times and at most $r$ times. That is,
	\beqn
	\label{gfr}
	f_r^v(k,n)=\#\{w\in[k]^n:occ_v(w)=r\} \quad \mbox{\rm and}\quad  g_r^v(k,n)=\sum_{j=0}^r f_j^v(k,n).
	\feqn
	We define their corresponding generating functions as
	\beqn
	\label{GF}
	F_{k,n}^v(x)=\sum_{r \geq 0} f_r^v(k,n)x^r \quad \text{and}\quad G_{k,n}^v(x)=\sum_{r \geq 0} g_r^v(k,n)x^r.
	\feqn
	We remark that given $f_r^v(k,n)=0$ for $r>\binom{n}{\ell},$ $F_{k,n}^v(x)$ is a polynomial in $x$.
	Throughout this paper, $a_n\sim b_n,$ $a_n=O(b_n),$ and $a_n=o(b_n)$ for sequences $a_n$ and $b_n$ with elements that might depend on $k,r,\ell,d,v$ and
	other parameters, means that, respectively, $\lim_{n\to\infty}\frac{a_n}{b_n}=1,$ $\limsup_{n\to\infty}\bigl|\frac{a_n}{b_n}\bigr|<\infty,$
	and $\lim_{n\to\infty}\frac{a_n}{b_n}=0$ for all feasible values of the parameters when the latter are fixed. As usual, $a_n=\Theta(b_n)$ indicates 
	that both $a_n=O(b_n)$ and $b_n=O(a_n)$ hold true.   
	\par	
	The remainder of this section is divided into four subsections. In Section~\ref{faust} we study a finite state automaton that generates words 
	$w\in [k]^n$ with a given value of $occ_v(w).$	The words are then counted trough an application of the transfer-matrix method, 
	allowing us to evaluate $g_r^v(k,n)$ and subsequently $f_r^v(k,n)$ in several interesting cases. The results of Section~\ref{faust} 
	are then used in Section~\ref{wftl} to show that (see Theorem~\ref{thm:fnrlim_word}) for any $r\geq 0,$
	\beq
	\lim_{n\to \infty}\bigl(f_r^v(k, n)\bigr)^{\frac{1}{n}} = \lim_{n\to \infty}\bigl(g_r^v(k, n)\bigr)^{\frac{1}{n}}=d-1,
	\feq
	where $d$ is the number of distinct letters in the pattern $v.$  Theorem~\ref{thm:fnrlim_word} is the main result of this paper.  
	Remark that a similar result for permutations is given by Theorem~\ref{thm:fnrlim} in Section~\ref{ldpsw}. We refer to $\lim_{n\to \infty}\bigl(f_r^v(k, n)\bigr)^{\frac{1}{n}}$ and their counterparts for permutations in Theorem~\ref{thm:fnrlim} as Stanley-Wilf type limits. 
	\par 
	Finally, Sections~\ref{weaka} and ~\ref{new} deal with random words.
	Let $W_n$ be a permutation chosen at random from $[k]^n$ and $X_n=occ_v(W_n).$ In Section~\ref{new} we obtain a central limit theorem 
	and several related asymptotic results for the distribution of $X_n.$ The study of $X_n$ is, in principle, equivalent to the study of the sequences  
	$f_r^v(k,n)$ and $g_r^v(k,n)$ in view of the identities \eqref{fgP}. In Section~\ref{weaka} we introduce a notion of weak avoidance for an arbitrary word pattern. In Theorem~\ref{inv1} we obtain limit theorems for random words avoiding a pattern weakly. The distribution of $W_n$ is not uniform in this case, and we use  the CLT for the uniform case and perturbation techniques to derive the results.
	\subsection{Finite automata and pattern occurrences} 
	\label{faust}
	Given an integer $r\geq0$, we define an equivalence relation $\sim_{v;r}$  on $[k]^*$ as follows.
	We say that two words $w'$ and $w$ in $[k]^*$ are equivalent and write $w' \sim_{v;r} w$ if the following condition holds for all $u \in [k]^*:$
	\beqn
	\label{adef}
	occ_{v}(w'u)=m \quad \mbox{\rm if and only if}\quad  occ_{v}(wu)=m,\qquad \forall~m\leq r.
		\feqn
	For instance, if $k=2$, $r=1$ and $v=12$, then $1\not\sim_{v;r}11$ because $occ_{12}(12)=1$ and $occ_{12}(112)=2$. On the other hand, $11\sim_{v;r}111$ because $occ_{12}(11u)=occ_{12}(111u)=m$ for any $m=0,1$, and $u\in[2]^*$. We denote the equivalence class of a word $w$ by $\langle w \rangle_{v;r}$. For simplicity in notation, we drop the indexes when context is clear. We remark that:
	\begin{itemize}
	\item[-] $w$ and $w'$ do not need to have the same length in order to be equivalent;
	\item[-] if $occ_v(w)>r$ and $occ_v(w')>r,$ then $w\sim_{v;r} w'.$
	\end{itemize}
	The latter observation implies that there is a unique equivalence class $\calr(v,r,k)$ such that
	$$\{w\in[k]^*:occ_v(w)>r\}\subset \calr(v,r,k).$$ Since the empty word $\epsilon$ is an element of the language $[k]^*,$ it
	follows from \eqref{adef} that if $occ_v(w)\leq r$ then
	\beq
	\langle w \rangle_{v;r} \subset \{w'\in[k]^*:occ_v(w')= occ_v(w)\}.
	\feq
	In particular,
	\beqn
	\label{calr}
	\calr(v,r,k)=\{w\in[k]^*:occ_v(w)>r\}.
	\feqn
	The following lemma shows that the equivalence of any two words can be checked with a finite number of steps.
	\begin{lemma}
	\label{blem1}
	Let $w'$ and $w$ be two words in $[k]^*.$ Then $w' \sim_{v;r} w$ if and only if
	\eqref{adef} holds for all $u \in [k]^{\leq r\ell}.$
	\end{lemma}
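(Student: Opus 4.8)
The plan is to establish the two directions of the equivalence. The forward direction is immediate: if $w'\sim_{v;r}w$, then by definition \eqref{adef} the required condition holds for \emph{all} $u\in[k]^*$, hence in particular for all $u\in[k]^{\leq r\ell}$. The entire content of the lemma is therefore the converse, namely that checking \eqref{adef} on the finite set of suffixes $u$ of length at most $r\ell$ already forces it to hold for every $u\in[k]^*$. The key idea I would use is a ``growth'' or induction-on-length argument on $u$: it suffices to show that if \eqref{adef} holds for all suffixes of length at most $r\ell$, then it automatically propagates to longer suffixes.

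First I would record a basic monotonicity fact that drives everything: appending letters can only create new occurrences of $v$, never destroy existing ones, so $occ_v(wu)$ is nondecreasing as $u$ grows, and more precisely $occ_v(wu')\geq occ_v(wu)$ whenever $u$ is a prefix of $u'$. The crucial quantitative observation is that a single occurrence of $v$ uses exactly $\ell$ letters, so among any $r+1$ occurrences at least one is ``witnessed'' by a set of at most $(r+1)\ell$ positions; I would make this precise to argue that the value $\min\{occ_v(wu),\,r+1\}$ --- equivalently, the truncated count relevant to \eqref{adef} --- is already determined by suffixes $u$ of bounded length. Concretely, if $occ_v(wu)$ first exceeds $r$ for some $u$, one can extract a subword of $u$ of length at most $r\ell$ (selecting at most $\ell$ positions per each of at most $r$ needed occurrences, beyond those already present in $w$) achieving $occ_v$ at least $r+1$ when concatenated with $w$; thus the threshold behavior is detected within length $r\ell$.

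The main technical step, and the one I expect to be the principal obstacle, is the induction that lifts the finite check to arbitrary $u$. Suppose \eqref{adef} holds for all $u\in[k]^{\leq r\ell}$ but fails for some longer $u$; take such a counterexample $u$ of minimal length $m>r\ell$. Write $u=au''$ where $a\in[k]$ is the first letter. One direction of \eqref{adef} must fail at level $m$, say $occ_v(w'u)=m_0\leq r$ but $occ_v(wu)\neq m_0$. The goal is to produce a strictly shorter counterexample, contradicting minimality. The delicate point is controlling how $occ_v(\cdot\,u)$ decomposes when the suffix is shortened: the occurrences of $v$ in $wu$ split into those lying entirely in $w$, those using some letters of $u$, and I must argue that the discrepancy between $w$ and $w'$ can be exhibited using only $O(r\ell)$ letters of $u$. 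Because at most $r$ occurrences are relevant before the count saturates past $r$, and each consumes at most $\ell$ letters of the suffix, one can prune $u$ down to a suffix $\tilde u$ of length at most $r\ell$ that preserves the relevant (truncated) occurrence count for both $w$ and $w'$, and on which \eqref{adef} then fails --- contradicting the hypothesis. I would make this pruning explicit by selecting, for each occurrence in $wu$ (respectively $w'u$) that uses suffix letters, the positions it actually touches, and noting their total number is bounded by $r\ell$; restricting $u$ to these positions yields the desired short witness.
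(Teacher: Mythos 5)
Your core mechanism is the same as the paper's: given a witness $u$ for $w'\nsim_{v;r}w$, keep only the letters of $u$ actually touched by the relevant occurrences of $v$ (at most $\ell$ letters of $u$ per occurrence), and use monotonicity under deletion of letters to control the count on the other word. The minimal-counterexample induction and the decomposition $u=au''$ that you announce as the main technical step are scaffolding that your own final paragraph discards: once the pruning works, no induction on $|u|$ is needed, and indeed the paper's proof is exactly the one-step pruning. Two refinements would align your sketch with the paper's cleaner version: prune only on the side with the \emph{larger} count (for a subsequence $u'$ of $u$ one automatically has $occ_v(w'u')\leq occ_v(w'u)$, so there is no need to keep letters ``for both $w$ and $w'$'' — your two-sided selection would in any case give a bound closer to $2r\ell$, not the $r\ell$ you assert); and note that pruning does not \emph{preserve} the truncated count on the smaller side, it only bounds it above, which is all that is needed to exhibit the mismatch.

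There is, however, a genuine quantitative gap in your treatment of the saturated case, namely when $occ_v(w'u)=m_1\leq r$ but $occ_v(wu)>r$. To certify this mismatch after pruning you must retain $m_1+1$ occurrences of $v$ in $wu$, and when $m_1=r$ this is $r+1$ occurrences, i.e.\ up to $(r+1)\ell$ letters of $u$; your claim that ``at most $r$ needed occurrences, beyond those already present in $w$'' suffices fails when $occ_v(w)=0$. You should be aware that the paper's own proof elides the very same case (its ``without loss of generality $m_1<m_2\leq r$'' is not a genuine WLOG, since the second count may exceed $r$), and in fact the bound $r\ell$ as stated is false at $r=0$: for $v=12$, $w=1$, $w'=2$ over $[2]$, condition \eqref{adef} holds for the unique $u=\epsilon\in[2]^{\leq 0}$, yet $u=2$ gives $occ_v(wu)=1\neq 0=occ_v(w'u)$, so $w\nsim_{v;0}w'$. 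The repair, for both your argument and the paper's, is to replace $r\ell$ by $(r+1)\ell$: prune to $\min\{occ_v(wu),\,r+1\}$ occurrences on the larger side. Since Lemma~\ref{blem1} is used only to conclude that $\sim_{v;r}$ has finitely many classes (so that $\Au(v,r,k)$ is a finite automaton), any finite bound serves equally well, and nothing downstream is affected.
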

	\begin{proof}
	Let $\sim_{v;r}'$ be an equivalence relation on $[k]^*$ such that
	$w' \sim_{v;r}' w$ if and only if \eqref{adef} holds for all $u \in [k]^{\leq r\ell}.$
	Clearly, $w' \sim_{v;r} w$ implies $w' \sim_{v;r}' w$. On the other hand, if
	$w' \nsim_{v;r} w$ then there exists $u \in [k]^*$ such that $occ_{v}(w'u)=m_1$ and $occ_{v}(wu)=m_2$
	with $m_1\neq m_2$ and $m_1,m_2\leq r.$ Without loss of generality we may assume that $m_1<m_2\leq r$.
	The occurrences of $v$ in $wu$ can use at most $m_2\ell$ letters of $u.$ Thus there is a subsequence
	$u'$ of $u$ of length at most $m_2\ell$ such that $occ_v(w'u')\leq m_1$ and $occ_v(wu')=m_2$, and hence $w' \nsim_{v;r}' w$.
	\end{proof}
	Let $\cale(v,r,k)$ be the set of all equivalence classes of $\sim_{v;r}.$ Note that by Lemma \ref{blem1} the number of equivalence classes is finite.
	Recall $\calr(v,r,k)$ from \eqref{calr}, and let
	\beq
	E(v,r,k)=\cale(v,r,k)\backslash\{\calr(v,r,k)\}
	\feq
	denote the set of equivalence classes excluding $\calr(v,r,k).$  By the definition,
	\beq
	E(v,r,k)=\bigcup_{\{w\in [k]^*:occ_v(w)\leq r\}}\, \langle w \rangle_{v;r}.
	\feq
	We next introduce the key tool in our proofs in this section. 	
	\begin{definition}
	\label{defauto}
	Given an integer $r\geq 0,$ we denote by $\Au(v,r,k)$ a \emph{finite automaton} \cite{Hop} such that
	\begin{itemize}
	\item The set of states of the automaton is $\E(v,r,k);$
	\item The input alphabet is $[k];$
	\item Transition function $\delta : \E(v,r,k) \times [k] \rightarrow \E(v,r,k)$ is given by the rule
	$\delta(\langle w \rangle,a)=\langle wa \rangle;$
	\item The {\em initial state} is $\langle\epsilon\rangle,$  where $\epsilon$ denotes the empty word;
	\item All states are final states.
	\end{itemize}
	\end{definition}
	We identify the automaton $A(v,r,k)$ with a (labeled) directed graph
	with vertices in $\E(v,r,k)$ such that there is a labeled edge
	$\stackrel{a}{\longrightarrow}$ from $\langle w \rangle$ to
	$\langle w' \rangle$ if and only if $wa \sim_{v,r} w'$.
	\begin{example}
	\label{ae}
	Consider the case $v=123$, $k=3,$ and $r=1.$ The set of equivalence classes $\E(123,1,3)$ is given by
	\beq
	\E(123,1,3)=\{\langle\epsilon\rangle,\langle1\rangle,\langle11\rangle,\langle12\rangle,
	\langle112\rangle,\langle123\rangle\}.
	\feq
	The labeled graph associated with the automaton $\Au(123,1,3)$ is
	\begin{center}
	\begin{picture}(150,80)
	\put(0,-50){
	\put(0,90){$\langle\epsilon\rangle$}
	\put(40,90){$\langle1\rangle$}
	\put(80,90){$\langle11\rangle$}
	\put(140,90){$\langle112\rangle$}
	\put(80,60){$\langle12\rangle$}
	\put(140,60){$\langle123\rangle$}
	\put(15,94){\vector(1,0){22}\put(-13,3){\tiny$1$}}\put(55,94){\vector(1,0){22}\put(-13,3){\tiny$1$}}
	\put(105,94){\vector(1,0){27}\put(-13,3){\tiny$2$}}\put(55,94){\vector(1,-1){25}\put(-13,-22){\tiny$2$}}
	\put(105,64){\vector(1,0){27}\put(-13,3){\tiny$3$}}
	\put(105,64){\vector(4,3){30}\put(-13,17){\tiny$2$}}
	\put(3,102){\qbezier(0,0)(4,15)(7,0)
	\multiput(6.4,0)(.1,.2){12}{\put(0,0){\tiny.}}
	\multiput(6.4,0)(-.2,.1){12}{\put(0,0){\tiny.}}
	\put(-2,12){\tiny$2,3$}}
	\put(43,102){\qbezier(0,0)(4,15)(7,0)
	\multiput(6.4,0)(.1,.2){12}{\put(0,0){\tiny.}}
	\multiput(6.4,0)(-.2,.1){12}{\put(0,0){\tiny.}}
	\put(-2,12){\tiny$3$}}
	\put(83,102){\qbezier(0,0)(4,15)(7,0)
	\multiput(6.4,0)(.1,.2){12}{\put(0,0){\tiny.}}
	\multiput(6.4,0)(-.2,.1){12}{\put(0,0){\tiny.}}
	\put(-2,12){\tiny$1,3$}}
	\put(148,102){\qbezier(0,0)(4,15)(7,0)
	\multiput(6.4,0)(.1,.2){12}{\put(0,0){\tiny.}}
	\multiput(6.4,0)(-.2,.1){12}{\put(0,0){\tiny.}}
	\put(-2,12){\tiny$1,2$}}
	\put(148,69){\qbezier(0,0)(4,15)(7,0)
	\multiput(6.4,0)(.1,.2){12}{\put(0,0){\tiny.}}
	\multiput(6.4,0)(-.2,.1){12}{\put(0,0){\tiny.}}
	\put(-2,8){\tiny$1,2$}}
	\put(85,69){\qbezier(0,0)(4,15)(7,0)
	\multiput(6.4,0)(.1,.2){12}{\put(0,0){\tiny.}}
	\multiput(6.4,0)(-.2,.1){12}{\put(0,0){\tiny.}}
	\put(-2,8){\tiny$1$}}}
	\end{picture}
	\end{center}
	\end{example}
	$\mbox{}$
	\par
	The automata serves for us as a bridge between the formal language theory and theory of computing on one side and the asymptotic theory of algebraic functions on the other. See, for instance, \cite{autc, analcombin} and references therein for background.
	\par
	We exploit the link between asymptotic properties of rational functions and the structure of associated regular languages to study the generating functions $F_{r,k}^v(x)$ and $G_{r,k}^v(x)$
	of the sequences $f_r^v(k,n)$ and $g_r^v(k,n)$ defined in \eqref{GF}, and subsequently the asymptotic behavior of these sequences, as $n$ tends to infinity. The class of automata $Au(v,0,k)$ has been introduced in \cite{BM}. Our results in this subsection (Lemmas~\ref{lemloops} and \ref{gf} below) are extensions of the corresponding results in Section~2 of \cite{BM}.
	\par
	It is straightforward to verify (cf. \cite[p.~256]{HM}) that one can order the states of the automaton $Au(v,r,k)$ as $s_1,x_2, \ldots, s_p,$
	$p=\#\E(v,r,k),$ so that if $i<j$ then there is no path from the state $s_j$ to the state $s_i$. {\em Transition matrix} $T(v,r,k)$ of
	$\Au(v,r,k)$ is the $p\times p$  matrix with non-negative integer entries defined by
	\beq
	[T(v,r,k)]_{ij}= \#\{a \in [k] : \delta(s_i,a)=s_j\}.
	\feq
	Thus $[T(v,r,k)]_{ij}$ counts the number of edges between $s_i$ and $s_j$, and $T(v,r,k)$ is triangular.
	The following observation reduces the study of the sequence $g_r^v(k,n),$ $n\in\nn,$ to the analysis of the matrix
	$T(v,r,k):$
	\beqn
	\label{a}
	\nonumber
	g_r^v(k,n)&=&\#\{\mbox{paths of length $n$ starting at $\langle \epsilon \rangle$ in the graph associated with $Au(v,r,k)$}\}
	\\
	&=&\sum_{j=1}^p [T^n]_{1j},
	\feqn
	where $T=T(v,r,k)$ and $p=\#E(v,r,k).$
		
	\begin{example}
	\label{ae1}
	Consider again the setup of Example~\ref{ae}, namely $v=123$, $k=3,$ and $r=1.$
	The transition matrix $T(123,1,3)$ is given by
	\beq
	\left(\begin{array}{llllll}
	2&1&0&0&0&0\\
	0&1&1&1&0&0\\
	0&0&2&0&1&0\\
	0&0&0&1&1&1\\
	0&0&0&0&2&0\\
	0&0&0&0&0&2\\
	\end{array}\right).
	\feq
	Thus the generating function for the number of $3$-ary words of length $n$ that contains $123$ at most once is given by
	\beqn
	\nonumber
	G_{1,3}^{123}(x)&=&\sum_{n\geq 0}g_1^{123}(3,n)x^n=e_1^t\sum_{n\geq 0}T(123,1,3)^nx^n(e_1+\cdots+e_6)
	\\
	\label{g3}
	&=&\frac{(x^4-8x^3+10x^2-5x+1)}{(1-2x)^3(1-x)^2},
	\feqn
	 where $e_i$ is the $i$-th standard unit vector (all coordinates are zero, except that the $i$-th
	 coordinate is one). Note that the  generating function for the number of $3$-ary words of length $n$ that avoids $123$ is given
	by $F_{0,3}^{123}(x)=\sum_{n\geq 0}f_0^{123}(3,n)x^n=\frac{3x^2-3x+1}{(1-2x)^3}$ (see \cite{B}.) Therefore, by virtue of \eqref{g3},
	\beq
	F_{1,3}^{123}(x)=\sum_{n\geq 0}f_1^{123}(3,n)x^n=\frac{x^3}{(1-2x)^2(1-x)^2}.
	\feq
	Applying arguments similar to the one we used in order to get \eqref{g3}, we find that
	\beq
	G_{1,4}^{123}(x)=\sum_{n\geq 0}g_1^{123}(4,n)x^n=\frac{(1-7x+22x^2-32x^3+16x^4-2x^5)}{(1-x)(1-2x)^5},
	\feq
	and
	\beq
	G_{1,5}^{123}(x)=\sum_{n\geq 0}g_1^{123}(5,n)x^n=\frac{(1-10x+48x^2-124x^3+170x^4-103x^5-3x^6+23x^7)}{(1-x)(1-2x)^7}.
	\feq
	\end{example}
	
	\begin{example}
	\label{k1}
	The equivalence classes of $\Au(12\cdots k,0,k)$ are given by $\langle\epsilon\rangle$ and
	$\langle12\cdots j\rangle$, where $j=1,2,\ldots,k-1$. The $\Au(12\cdots k,0,k)$ can be graphically represented as follows:
	\begin{center}
	\begin{picture}(300,40)
	\put(0,12){ \put(0,0){$\langle\epsilon\rangle$}
	\put(40,0){$\langle1\rangle$} \put(80,0){$\langle12\rangle$}
	\put(180,0){$\langle12\cdots(k-2)\rangle$}
	\put(290,0){$\langle12\cdots(k-1)\rangle$}
	\put(15,3){\vector(1,0){22}\put(-13,3){\tiny$1$}}
	\put(55,3){\vector(1,0){22}\put(-13,3){\tiny$2$}}
	\put(105,3){\vector(1,0){22}\put(-13,3){\tiny$3$}}
	\put(130,0){$\cdots$}
	\put(150,3){\vector(1,0){22}\put(-18,3){\tiny$k-2$}}
	\put(260,3){\vector(1,0){22}\put(-18,3){\tiny$k-1$}}
	\put(3,10){\qbezier(0,0)(4,15)(7,0)
	\multiput(6.4,0)(.1,.2){12}{\put(0,0){\tiny.}}
	\multiput(6.4,0)(-.2,.1){12}{\put(0,0){\tiny.}}
	\put(-14,8){\tiny$2,3,\ldots,k$}}
	\put(43,10){\qbezier(0,0)(4,15)(7,0)
	\multiput(6.4,0)(.1,.2){12}{\put(0,0){\tiny.}}
	\multiput(6.4,0)(-.2,.1){12}{\put(0,0){\tiny.}}
	\put(-14,8){\tiny$1,3,\ldots,k$}}
	\put(88,10){\qbezier(0,0)(4,15)(7,0)
	\multiput(6.4,0)(.1,.2){12}{\put(0,0){\tiny.}}
	\multiput(6.4,0)(-.2,.1){12}{\put(0,0){\tiny.}}
	\put(-14,8){\tiny$1,2,4,\ldots,k$}}
	\put(210,10){\qbezier(0,0)(4,15)(7,0)
	\multiput(6.4,0)(.1,.2){12}{\put(0,0){\tiny.}}
	\multiput(6.4,0)(-.2,.1){12}{\put(0,0){\tiny.}}
	\put(-20,8){\tiny$1,2,\ldots,k-2,k$}}
	\put(320,10){\qbezier(0,0)(4,15)(7,0)
	\multiput(6.4,0)(.1,.2){12}{\put(0,0){\tiny.}}
	\multiput(6.4,0)(-.2,.1){12}{\put(0,0){\tiny.}}
	\put(-20,8){\tiny$1,2,\ldots,k-1$}}}
	\end{picture}
	\end{center}
	Therefore, $T(12\cdots k,0,k)$ is given by the matrix $(a_{ij})_{1\leq i,j\leq k}$ with $a_{ii}=k-1$ and
	$a_{i(i+1)}=1$ for all $i=1,2,\ldots,k,$ and the remaining entries equal to zero. Consequently,
	\beq
	\sum_{n\geq 0} f_0^{12\cdots k}(k,n)x^n=\sum_{j=0}^{k-1}\frac{x^j}{(1-(k-1)x)^{j+1}}.
	\feq
	\end{example}
		
	\begin{example}
	It is not hard to see that the equivalence classes of $\Au(12\cdots k,1,k)$ are given by $\langle\epsilon\rangle$,
	$\langle12\cdots j\rangle$ for $j=1,2,\ldots,k,$ and $\langle112\cdots j\rangle$ for $j=1,2,\ldots,k-1$. The automaton
	$\Au(12\cdots k,1,k)$ can be graphically represented as follows:
	\begin{center}
	\begin{picture}(350,70)
	\put(0,50){ \put(0,0){$\langle\epsilon\rangle$}
	\put(40,0){$\langle1\rangle$}
	\put(80,0){$\langle12\rangle$}\put(80,-40){$\langle11\rangle$}
	\put(180,0){$\langle12\cdots(k-1)\rangle$}
	\put(180,-40){$\langle112\cdots(k-2)\rangle$}
	\put(290,0){$\langle12\cdots k\rangle$}
	\put(290,-40){$\langle112\cdots(k-1)\rangle$}
	\put(15,3){\vector(1,0){22}\put(-13,3){\tiny$1$}}
	\put(55,3){\vector(1,0){22}\put(-13,3){\tiny$2$}}
	\put(55,0){\vector(1,-1){30}\put(-13,-18){\tiny$1$}}
	\put(105,3){\vector(1,0){22}\put(-13,3){\tiny$3$}}
	\put(105,-37){\vector(1,0){22}\put(-13,3){\tiny$2$}}
	\put(100,0){\vector(1,-1){30}\put(-13,-17){\tiny$2$}}
	\put(150,0){\vector(1,-1){30}\put(-13,-17){\tiny$k-3$}}
	\put(260,0){\vector(1,-1){30}\put(-13,-17){\tiny$k-2$}}
	\put(130,0){$\cdots$}\put(130,-40){$\cdots$}
	\put(150,3){\vector(1,0){22}\put(-18,3){\tiny$k-2$}}
	\put(150,-37){\vector(1,0){22}\put(-18,3){\tiny$k-3$}}
	\put(260,3){\vector(1,0){22}\put(-18,3){\tiny$k-1$}}
	\put(262,-37){\vector(1,0){22}\put(-18,3){\tiny$k-2$}}
	\put(3,10){\qbezier(0,0)(4,15)(7,0)
	\multiput(6.4,0)(.1,.2){12}{\put(0,0){\tiny.}}
	\multiput(6.4,0)(-.2,.1){12}{\put(0,0){\tiny.}}
	\put(-14,8){\tiny$2,3,\ldots,k$}}
	\put(43,10){\qbezier(0,0)(4,15)(7,0)
	\multiput(6.4,0)(.1,.2){12}{\put(0,0){\tiny.}}
	\multiput(6.4,0)(-.2,.1){12}{\put(0,0){\tiny.}}
	\put(-14,8){\tiny$3,4,\ldots,k$}}
	\put(88,-31){\qbezier(0,0)(4,15)(7,0)
	\multiput(6.4,0)(.1,.2){12}{\put(0,0){\tiny.}}
	\multiput(6.4,0)(-.2,.1){12}{\put(0,0){\tiny.}}
	\put(-18,8){\tiny$1,3,4,\ldots,k$}}
	\put(88,10){\qbezier(0,0)(4,15)(7,0)
	\multiput(6.4,0)(.1,.2){12}{\put(0,0){\tiny.}}
	\multiput(6.4,0)(-.2,.1){12}{\put(0,0){\tiny.}}
	\put(-14,8){\tiny$1,4,5,\ldots,k$}}
	\put(210,10){\qbezier(0,0)(4,15)(7,0)
	\multiput(6.4,0)(.1,.2){12}{\put(0,0){\tiny.}}
	\multiput(6.4,0)(-.2,.1){12}{\put(0,0){\tiny.}}
	\put(-20,8){\tiny$1,2,\ldots,k-3,k$}}
	\put(210,-31){\qbezier(0,0)(4,15)(7,0)
	\multiput(6.4,0)(.1,.2){12}{\put(0,0){\tiny.}}
	\multiput(6.4,0)(-.2,.1){12}{\put(0,0){\tiny.}}
	\put(-20,8){\tiny$1,2,\ldots,k-3,k-1,k$}}
	\put(310,10){\qbezier(0,0)(4,15)(7,0)
	\multiput(6.4,0)(.1,.2){12}{\put(0,0){\tiny.}}
	\multiput(6.4,0)(-.2,.1){12}{\put(0,0){\tiny.}}
	\put(-20,8){\tiny$1,2,\ldots,k-1$}}
	\put(320,-31){\qbezier(0,0)(4,15)(7,0)
	\multiput(6.4,0)(.1,.2){12}{\put(0,0){\tiny.}}
	\multiput(6.4,0)(-.2,.1){12}{\put(0,0){\tiny.}}
	\put(-20,8){\tiny$1,2,\ldots,k-1$}}}
	\end{picture}
	\end{center}
	Hence $T(12\cdots k,1,k)$ is given by the matrix $(b_{ij})_{1\leq i,j\leq 2k}$
	with $a_{11}=k-1$, $a_{12}=1$, $a_{22}=k-2$, $a_{23}=a_{2,4}=1$, $a_{2i,2i}=k-1,$ $a_{2i,2i+2}=1,$
	$a_{2i-1,2i-1}=k-2,$ and $a_{2i-1,2i+1}=a_{2i-1,2i+2}=1$ for all $i=2,3,\ldots,k-1$, $a_{2k,2k}=a_{2k-1,2k-1}=k-1$,
	and the remaining entries equal to zero. Let $C=I-xT(12\cdots k,1,k)$. In view of \eqref{a}, we are interested in computing $e_1^tC^{-1}(e_1+\cdots+e_{2k})$. First, we solve the system $C{\mathfrak z} =e_1+\cdots+e_{2k}$,
	where ${\mathfrak z}={\mathfrak z}(x)$ is the vector ${\mathfrak z}=({\mathfrak z}_1,\ldots,{\mathfrak z}_{2k})^t$.
	By induction,
	\beq
	{\mathfrak z}_{2k-2j}(x)=\sum_{i=1}^{j+1}\frac{x^{i-1}}{(1-(k-1)x)^i}
	\feq
	and
	\beq
	{\mathfrak z}_{2k-1-2j}(x)&=&\frac{x^{j+1}}{(1-(k-1)x)(1-(k-2)x)^{j+1}}
	\\
	&&
	\qquad
	+\sum_{i=1}^{j+1}\frac{x^{i-1}}{(1-(k-2)x)^i}\left(1+x\sum_{s=1}^{j+1-i}\frac{x^{s-1}}{(1-(k-1)x)^s}\right),
	\feq
	for $j=1,2,\ldots,k-1$. Hence,
	\begin{align*}
	&\sum_{n\geq 0} g_1^{12\cdots k}(k,n)x^n=e_1^tC^{-1}(e_1+\cdots+e_{2k})={\mathfrak z}_1(x)\\
	&=\frac{1}{1-(k-1)x}+\frac{x}{(1-(k-1)x)(1-(k-2)x)}
	+\frac{x^2}{(1-(k-1)x)(1-(k-2)x)}({\mathfrak z}_3+{\mathfrak z}_4)\\
	&=\frac{1}{1-(k-1)x}+\frac{x}{(1-(k-1)x)(1-(k-2)x)}\\
	&+\frac{x^2}{(1-(k-1)x)(1-(k-2)x)}\sum_{i=1}^{k-1}\frac{x^{i-1}}{(1-(k-1)x)^i}+\frac{x^{k+1}}{(1-(k-1)x)^2(1-(k-2)x)^{k}}\\
	&+\frac{1}{1-(k-1)x}\sum_{i=1}^{k-1}\frac{x^{i+1}}{(1-(k-2)x)^{i+1}}\left(1+x\sum_{s=1}^{k-1-i}\frac{x^{s-1}}{(1-(k-1)x)^s}\right).
	\end{align*}
	Taking in account the result in Example~\ref{k1}, we conclude that the generating
	function for the number of $k$-ary words of length $n$ that contains $12\cdots k$ exactly once is given by
	\begin{align*}
	& F_{1,k}^{12\cdots k}(x)=\sum_{n\geq 0} f_1^{12\cdots k}(k,n)x^n=\frac{x^k}{(1-(k-1)x)^2(1-(k-2)x)^{k-1}} \\
	&\qquad +\frac{1}{(1-kx)(1-(k-2)x)}\left(1-(k-2)x-\frac{x^k}{(1-(k-1)x)^{k-1}}-\frac{x^6}{(1-(k-1)x)^5}\right).
	\end{align*}
	Note that $\lim_{x\rightarrow1/k}F_{1,k}^{12\cdots k}(x)=
	\frac{k((k+5)2^k+4)}{2^{k+1}}  $.
	Hence the minimal by absolute value pole of $F_{1,k}^{12\cdots k}(x)$ is $x=1/(k-1),$
	and it is of order $k-2$ when $k\geq6$. Thus (see, for instance, \cite{analcombin} or \cite{autc}), as $n\to\infty,$
	\beq
	f_1^{12\cdots k}(k,n)\sim \frac{n^{k-2}(k-1)^{n+3-k}}{(k-1)!} \quad \text{for} \quad k\geq 7.
	\feq
	For $k \leq 6$ we have:
	\beq
	&&
	f_1^{12\cdots k}(k,n)\sim \frac{n^42^n}{384} \quad \text{for} \quad k=3, \qquad
	f_1^{12\cdots k}(k,n)\sim \frac{n^43^n}{1944} \quad \text{for} \quad k=4,\\
	&&
	f_1^{12\cdots k}(k,n)\sim \frac{n^44^n}{6144} \quad \text{for} \quad k=5, \qquad
	f_1^{12\cdots k}(k,n)\sim \frac{n^45^n}{7500} \quad \text{for} \quad k=6.
	\feq
	\end{example}
	$\mbox{}$
	\par
	We refer to an edge of the associated graph starting and ending at the same state $\langle w \rangle$ as a
	\emph{loop} at $\langle w \rangle.$ It is easy to see that the graph does not have any cycles, besides perhaps loops (cf. \cite[p.~256]{HM}).
	Using similar arguments as in \cite{BM} (see Lemma~2.4 there), one can prove the following lemma.
	\begin{lemma}
	\label{lemloops}
	Let $d$ be the number of distinct letters in $v.$ Then for any $\langle u \rangle \in E(v,r,k),$
	the number of loops at $\langle u \rangle$ does not exceed $d-1$.
	Moreover, there are exactly $d-1$ loops at $\langle\epsilon \rangle$.
	\end{lemma}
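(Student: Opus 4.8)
The plan is to read a loop at a state $\langle u\rangle$ as a letter $a\in[k]$ for which $ua\sim_{v;r}u$, and to reformulate this condition concretely. Since $uw$ is always a subsequence of $uaw$, we have $occ_v(uw)\le occ_v(uaw)$ for every $w\in[k]^*$, so inserting $a$ can only create new occurrences, never destroy them. Because $\langle u\rangle\in E(v,r,k)$ is distinct from the absorbing class $\calr(v,r,k)$, every word in the class $\langle u\rangle$ has at most $r$ occurrences of $v$; unwinding the definition of $\sim_{v;r}$ with the empty continuation then shows that $a$ is a loop at $\langle u\rangle$ precisely when $occ_v(uaw)=occ_v(uw)$ for all $w$ with $occ_v(uw)\le r$, i.e. the inserted copy of $a$ lies in no new occurrence as long as the count stays below the threshold. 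I would record this equivalent description first, as both parts of the lemma rest on it.

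For the bound by $d-1$, I would argue by contradiction. Suppose there were $d$ distinct loops $a_1<a_2<\cdots<a_d$ at $\langle u\rangle$, and let $\sigma_1<\cdots<\sigma_d$ be the distinct letters of $v$. Replacing each value $\sigma_i$ in $v$ by $a_i$ produces a word $w^*\in[k]^\ell$ that is order-isomorphic to $v$; in particular $w^*$ contains $v$. Reading $w^*$ letter by letter in the automaton starting from $\langle u\rangle$ keeps us at $\langle u\rangle$ at every step, because each letter of $w^*$ is one of the $a_i$ and transitions depend only on the current state; hence $\langle uw^*\rangle=\langle u\rangle$, so $uw^*\sim_{v;r}u$. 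Evaluating this equivalence at the empty continuation and at $m=occ_v(u)\le r$ forces $occ_v(uw^*)=occ_v(u)$. On the other hand the occurrences of $v$ contained entirely in the $u$-block and the (at least one) occurrence contained entirely in the $w^*$-block use disjoint sets of positions, so $occ_v(uw^*)\ge occ_v(u)+1$, a contradiction. This rules out $d$ loops at any state.

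For the equality at $\langle\epsilon\rangle$ it remains to exhibit $d-1$ loops there. Writing $v_1=\sigma_j$, I would use that a letter $a$ placed at the front of a word can only serve as the first entry of an occurrence, hence can only play the role of $v_1$. For $a$ to anchor such an occurrence that is completable inside $[k]$ one needs $j-1$ distinct values below $a$ and $d-j$ distinct values above $a$, i.e. $j\le a\le k-d+j$; for these $a$ one checks, taking $w$ a length-$(\ell-1)$ realization of $v_2\cdots v_\ell$ relative to $a$ (which has no occurrence of $v$ for length reasons), that $a$ is not a loop. Conversely every $a$ with $a\le j-1$ or $a\ge k-d+j+1$ can never lie in a front occurrence of $v$, so $occ_v(aw)=occ_v(w)$ for all $w$ and $a$ is a loop; there are exactly $(j-1)+(d-j)=d-1$ such letters. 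Combined with the upper bound this yields exactly $d-1$ loops at $\langle\epsilon\rangle$. I expect the upper-bound step to be the main obstacle: the delicate point is that preserving the state genuinely forces \emph{equal} occurrence counts rather than merely both exceeding $r$, which is exactly what the constraint $occ_v(u)\le r$ for $\langle u\rangle\in E(v,r,k)$ secures, and this is what makes the order-isomorphic word $w^*$ produce the contradiction.
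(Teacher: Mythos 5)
Your proof is correct and is essentially the argument the paper has in mind: the paper omits the proof, citing Lemma~2.4 of \cite{BM}, and your reasoning (appending an order-isomorphic word built from $d$ hypothetical loop letters to force a forbidden increase in the occurrence count, then counting the letters $a\le j-1$ or $a\ge k-d+j+1$ that can never serve as $v_1$ at $\langle\epsilon\rangle$) is exactly that argument adapted from $r=0$ to general $r$, with the key observation that states in $E(v,r,k)$ carry occurrence count at most $r$ correctly supplying the contradiction.
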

	
	Recalling \eqref{GF}, the following lemma links the number of loops to the poles of the generating function $G_{k,n}^v(x),$ $x\in\cc,$ and hence to the asymptotic behavior of the sequence $g_r^v(k,n)$ as $n$ tends to infinity. The result follows directly from the identity in \eqref{a}
	and the transfer-matrix method \cite[Theorem 4.7.2]{Stanley1}. Given a matrix $A,$ denote by $A^{(i,j)}$ the matrix with row $i$ and column $j$ deleted.
	We have:
	\begin{lemma}
	\label{gf}
	Let $p=\#E(v,r,k)$ be the number of states in $\Au(v,r,k)$. Then the generating function $G_{k,n}^v(x)$ is given by
	\beq
	G_{k,n}^v(x) = \sum_{n\geq 0} g_r^v(k,n)x^n= \frac {\sum_{j=1}^p (-1)^{j+1}\det(I-xT^{(j,1)})}
	{\prod_{i=1}^p(1-\lambda_ix)}= \frac{ \det B(x)}{\prod_{i=1}^p(1-\lambda_ix)},
	\feq
	where $\lambda_i$ is the number of loops at state $s_i$, $T=T(v,r,k),$ and $B(x)$ is the
	matrix obtained by replacing the first column in $I-x T$ with a column of all ones.
	\end{lemma}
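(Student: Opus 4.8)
The plan is to derive the formula directly from the combinatorial identity \eqref{a} together with Cramer's rule, exactly as the transfer-matrix method of \cite[Theorem~4.7.2]{Stanley1} prescribes. The starting point is that \eqref{a} expresses $g_r^v(k,n)$ as the first-row sum $\sum_{j=1}^p [T^n]_{1j}$ of the $n$-th power of $T=T(v,r,k)$. Multiplying by $x^n$, summing over $n$, and interchanging the outer sum with the finite sum over $j$ (there are only $p=\#E(v,r,k)$ states), I would write
\beq
G_{k,n}^v(x)=\sum_{n\geq 0}\Bigl(\sum_{j=1}^p [T^n]_{1j}\Bigr)x^n
=\sum_{j=1}^p\sum_{n\geq 0}[T^n]_{1j}\,x^n
=\sum_{j=1}^p\bigl[(I-xT)^{-1}\bigr]_{1j},
\feq
where the last step uses the matrix geometric series $\sum_{n\geq 0}(xT)^n=(I-xT)^{-1}$, valid as a formal power series in $x$ (equivalently, for $|x|$ small enough that $|x|$ times the spectral radius of $T$ is less than $1$).

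Next I would evaluate the entries of the inverse by Cramer's rule: $\bigl[(I-xT)^{-1}\bigr]_{1j}=(-1)^{j+1}\det(I-xT^{(j,1)})/\det(I-xT)$, where $\det(I-xT^{(j,1)})$ denotes the minor of $I-xT$ obtained by deleting its $j$-th row and first column (the $(j,1)$ cofactor, not the $(1,j)$ one, is what enters the $(1,j)$ entry of the inverse). Summing over $j$ yields the middle expression in the statement. The denominator is then handled using the triangularity of $T$ already noted in the text: the eigenvalues of a triangular matrix are its diagonal entries, and by the very definition of the transition matrix, $[T]_{ii}=\#\{a\in[k]:\delta(s_i,a)=s_i\}$ is precisely the number $\lambda_i$ of loops at $s_i$ (which is finite and at most $d-1$ by Lemma~\ref{lemloops}). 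Hence $\det(I-xT)=\prod_{i=1}^p(1-\lambda_i x)$, the claimed denominator.

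Finally, to reach the compact form $\det B(x)/\prod_i(1-\lambda_i x)$, I would recognize the numerator sum as a single determinant via cofactor expansion along the first column. By construction $B(x)$ coincides with $I-xT$ off the first column and has the all-ones vector as its first column, so expanding $\det B(x)$ along that column gives $\det B(x)=\sum_{j=1}^p 1\cdot(-1)^{j+1}M_{j1}$, where the minor $M_{j1}$ deletes row $j$ and column $1$ and therefore equals $\det(I-xT^{(j,1)})$ (the first column is discarded in either matrix). Term by term this matches the numerator obtained above, completing the identification.

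I expect the only delicate points to be bookkeeping rather than conceptual: keeping the sign $(-1)^{j+1}$ and the transpose in the adjugate/cofactor relation consistent, and confirming that the relevant minors are those of $I-xT$ with row $j$ and the \emph{first} column removed. Since the triangularity of $T$ and the identification $[T]_{ii}=\lambda_i$ are already available, the denominator is immediate, and no substantial obstacle remains beyond careful index tracking.
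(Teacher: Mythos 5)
Your proof is correct and takes essentially the same approach as the paper: the paper's entire proof of Lemma~\ref{gf} is the citation of the identity \eqref{a} together with the transfer-matrix method of \cite[Theorem~4.7.2]{Stanley1}, and your geometric-series, Cramer's-rule, and first-column cofactor-expansion argument is precisely what that citation unpacks to. You also handle the two delicate points correctly, reading $\det(I-xT^{(j,1)})$ as the minor of $I-xT$ with row $j$ and column $1$ deleted (the $(j,1)$ cofactor entering the $(1,j)$ entry of the inverse) and obtaining the denominator $\prod_{i=1}^p(1-\lambda_ix)$ from the triangularity of $T$ and the identification of its diagonal entries with the loop counts $\lambda_i$.
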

	\subsection{Stanley-Wilf type limits}
	\label{wftl}
	Throughout this section we assume that the number of distinct letters in the pattern $v\in [k]^\ell,$ namely $d,$ is greater than one.
	An interesting consequence of the results in Lemma~\ref{lemloops} and Lemma~\ref{gf} is the following theorem, which is the main result 
	of this section.	
	\par 
	Recall $f_r^v(k,n)$ and $g_r^v(k,n)$ from \eqref{gfr}.
	\begin{theorem}
	\label{thm:fnrlim_word}
	Assume that $d>1.$ Then for all $r \in \nn_0,$
	\beqn
	\label{lwd}
	\lim_{n\to \infty}\bigl(f_r^v(k, n)\bigr)^{\frac{1}{n}} = \lim_{n\to \infty}\bigl(g_r^v(k, n)\bigr)^{\frac{1}{n}}=d-1.
	\feqn
	\end{theorem}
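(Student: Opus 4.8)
The plan is to deduce both limits from the structural results in Lemmas~\ref{lemloops} and~\ref{gf} by controlling the poles of the generating function $G_{k,n}^v(x)=\sum_{n\geq 0} g_r^v(k,n)x^n.$ By Lemma~\ref{gf}, this generating function is rational with denominator $\prod_{i=1}^p(1-\lambda_i x),$ where $\lambda_i$ is the number of loops at state $s_i$ of the automaton $\Au(v,r,k).$ By Lemma~\ref{lemloops}, every $\lambda_i$ satisfies $\lambda_i\leq d-1,$ and at least one state (namely $\langle\epsilon\rangle$) has exactly $d-1$ loops. Consequently the denominator factor $(1-(d-1)x)$ actually occurs, so $\frac{1}{d-1}$ is a genuine candidate pole, while every other factor $(1-\lambda_i x)$ has $\lambda_i\leq d-1$ and hence a pole no closer to the origin than $\frac{1}{d-1}.$ Provided this candidate pole is not cancelled by the numerator $\det B(x),$ the smallest modulus pole of $G_{k,n}^v$ is exactly $\frac{1}{d-1},$ and standard transfer theorems for rational generating functions (see \cite{analcombin, autc} or \cite[Thm.~4.7.2]{Stanley1}) give $g_r^v(k,n)=\Theta\bigl(n^{M}(d-1)^n\bigr)$ for some integer $M\geq 0,$ whence $\lim_{n\to\infty}\bigl(g_r^v(k,n)\bigr)^{1/n}=d-1.$

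For the lower bound on the radius of convergence — equivalently the statement that no pole is strictly inside $|x|<\frac{1}{d-1}$ — I would argue directly from the combinatorial meaning of $g_r^v(k,n)$ via the path-counting identity \eqref{a}: each entry of $T^n$ counts paths of length $n,$ and since every state has at most $d-1$ loops and the graph is acyclic apart from loops (only finitely many non-loop edges can ever be traversed along a single path), one gets an elementary upper bound $g_r^v(k,n)\leq \mathrm{poly}(n)\,(d-1)^n.$ This shows $\limsup_n\bigl(g_r^v(k,n)\bigr)^{1/n}\leq d-1$ without any appeal to pole cancellation. For the matching lower bound, the $d-1$ loops at $\langle\epsilon\rangle$ already produce at least $(d-1)^n$ words of length $n$ that stay at the initial state (these are words on the $d-1$ letters that keep $occ_v=0\leq r$), giving $g_r^v(k,n)\geq (d-1)^n$ directly, hence $\liminf_n\bigl(g_r^v(k,n)\bigr)^{1/n}\geq d-1.$ Combining the two bounds yields the $g_r^v$ half of \eqref{lwd} cleanly, sidestepping the delicate question of whether the numerator cancels the dominant pole.

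It remains to transfer the result from $g_r^v$ to $f_r^v.$ The elementary inequality $f_r^v(k,n)\leq g_r^v(k,n)$ is immediate from \eqref{gfr}, so $\limsup_n\bigl(f_r^v(k,n)\bigr)^{1/n}\leq d-1.$ For the lower bound I would again use the $(d-1)^n$ words formed from the $d-1$ distinct letters appearing in $v$ arranged so as to realize \emph{exactly} $r$ occurrences: for instance, by prepending a short fixed prefix that forces precisely $r$ occurrences and then appending a long block of letters chosen from a $(d-1)$-element subalphabet that creates no new occurrence, one produces $\Theta\bigl((d-1)^n\bigr)$ words with $occ_v=r$ exactly, giving $\liminf_n\bigl(f_r^v(k,n)\bigr)^{1/n}\geq d-1.$ In automaton language this amounts to exhibiting a state $\langle w\rangle\in E(v,r,k)$ with $occ_v(w)=r$ carrying $d-1$ loops; the existence of such a state is the crux, and I expect this to be the main obstacle, since Lemma~\ref{lemloops} only guarantees $d-1$ loops at $\langle\epsilon\rangle$ (where $occ_v=0$). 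Resolving it requires showing that after fixing $r$ occurrences one can still freely extend by all $d-1$ letters without incrementing the count, which is plausible but needs a careful choice of the witnessing word $w$ and a verification that none of those $d-1$ letters completes a new copy of $v.$ Once this state is produced, the squeeze $\liminf\geq d-1$ and $\limsup\leq d-1$ closes the argument for $f_r^v$ and completes the proof of Theorem~\ref{thm:fnrlim_word}.
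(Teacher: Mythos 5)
Your treatment of $g_r^v$ is correct, and in fact tidier than the paper's: the paper asserts via Lemmas~\ref{lemloops} and~\ref{gf} that the smallest pole of the rational generating function of $g_r^v(k,\cdot)$ is $\frac{1}{d-1}$ and reads off $\limsup_{n}\bigl(g_r^v(k,n)\bigr)^{1/n}=d-1$ from the radius of convergence, leaving the cancellation question you raise unaddressed (it is harmless there too, since cancellation can only enlarge the radius of convergence, so the bound $\limsup\leq d-1$ survives and the matching lower bound comes from the combinatorial side). Your direct path count through the ``loops plus DAG'' structure of $\Au(v,r,k)$, giving $g_r^v(k,n)\leq \mathrm{poly}(n)(d-1)^n$, together with $g_r^v(k,n)\geq f_0^v(k,n)\geq (d-1)^n$ from the $d-1$ loops at $\langle\epsilon\rangle$, settles the $g_r^v$ half cleanly and makes the pole issue moot. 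The inequality $f_r^v\leq g_r^v$ for the upper bound on $f_r^v$ is also exactly what the paper does.

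The genuine gap is precisely where you flag it: the lower bound $\liminf_n\bigl(f_r^v(k,n)\bigr)^{1/n}\geq d-1$, which is the real content of the theorem, is left as a ``plausible'' claim about the existence of a $(d-1)$-letter subalphabet whose letters can be appended without creating new occurrences --- and that claim \emph{is} the paper's proof, so stating it without proof proves nothing. The paper's resolution is short and explicit: relabel so that the $d$ distinct letters of $v$ are exactly $1,\ldots,d$ (occurrences depend only on order type), let $v_\ell$ be the last letter of $v$, and take $S=\{1,\ldots,v_\ell-1\}\cup\{k-d+v_\ell+1,\ldots,k\}$, a set of cardinality $(v_\ell-1)+(d-v_\ell)=d-1$. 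If $occ_v(w)=r$ and every letter of $u$ lies in $S$, then $occ_v(wu)=r$: any new occurrence must use at least one letter of $u$, and its rightmost letter, some $c\in S$, is matched to $v_\ell$, hence needs at least $v_\ell-1$ distinct values of the occurrence strictly below it and at least $d-v_\ell$ strictly above it; this fails when $c\leq v_\ell-1$ (at most $c-1\leq v_\ell-2$ values available below) and when $c\geq k-d+v_\ell+1$ (at most $k-c\leq d-v_\ell-1$ values above). This yields $f_r^v(k,n)\geq c_r(d-1)^n$ once a single witness word with $occ_v(w)=r$ exists --- note that both you and the paper quietly assume such a witness, and this is not automatic: for $v=1122$ over $k=2$, writing the $2$'s at positions $q_1<\cdots<q_t$ with $m_a$ ones before $q_a$, one has $occ_v(w)=\sum_{a=1}^{t}(t-a)\binom{m_a}{2}$, which is either $\geq 3$ or a triangular number, so $occ_v(w)=2$ is never attained and $f_2^{1122}(2,n)\equiv 0$; a witness construction (or a hypothesis ensuring one, e.g.\ $k$ sufficiently large relative to $d$) is therefore a genuine additional ingredient. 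Modulo that shared caveat, supplying the set $S$ and the rank argument above is exactly what your proposal is missing.
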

	\begin{proof}
	By Lemma~\ref{gf}, the generating function $G_{r,k}^v(x)=\sum_{n\geq 0}g_r^v(k,n)x^n$ is a rational function in the complex plane $\cc.$ By Lemma~\ref{lemloops}, the smallest pole of $G_{r,k}^v(x)$ is $\frac{1}{d-1}.$
	Since the reciprocal of the smallest pole is the radius of convergence of the generating function \cite{analcombin}, we have
	\beq
	\limsup_{n\to \infty}(g_r^v(k, n))^{\frac{1}{n}} = d-1.
	\feq
	Since $f_r^v(k, n)\leq g_r^v(k, n)$, we conclude that
	\beq
	\limsup_{n\to \infty}(f_r^v(k, n))^{\frac{1}{n}}\leq d-1.
	\feq
	On the other hand, if $v\in [k]^\ell$ and a word $w\in[k]^*$ contains $v$ exactly $r$ times, then
	the concatenation $wu$ contains $v$ exactly $r$ times for any word $u\in[k]^*$ such that each letter of $u$ belongs to the set
	\beq
	\{1,2,\ldots,v_\ell-1, k-d+v_\ell+1,k-d+v_\ell+2,\ldots,k\},
	\feq
	where $v_\ell$ is the rightmost letter of $v$.
	Therefore, there exists a constant $c_r>0$ such that for all $n\in\nn,$
	\beq
	f_r^v(k,n)\geq c_r(v_\ell-1+k-k+d-v_\ell-1+1)^n=c_r(d-1)^n.
	\feq
	Hence,
	\beq
	\liminf_{n\to \infty}\bigl(f_r^v(k, n)\bigr)^{\frac{1}{n}}\geq d-1,
	\feq
	which completes the proof of the theorem.
	\end{proof}
	Note that the limit in \eqref{lwd} is independent of $r.$ It turns out that a similar result holds for the occurrence enumeration problem in permutations; see Theorem~\ref{thm:fnrlim} below. We remark that in the case of permutations, the structure of the dependence of the limit on the underlying pattern
	is considerably more complex than in \eqref{lwd} and is not yet completely understood \cite{wf1, fox, wf}. The theorem has an interesting implication 
	for the asymptotic behavior of the entropy of the random  variable $X_n=occ_v(W_n)$ with a random  $W_n,$ see Theorem~\ref{entropy} below for details.   
	\par
	A simple path in the graph representation of $Au(v,r,k)$ is a finite sequence of states $s_{j_0},\ldots,s_{j_q}$ in $E(v,r,k)$ such that
	$s_{i_0}=\langle \epsilon \rangle$ and for all $i=1,\ldots, q,$  we have $j_{i-1}<j_i$ and $s_{j_{i-1}}$ is connected to $s_{j_i}$ by a direct edge.
	The proof of the following partial refinement of Theorem~\ref{thm:fnrlim_word} follows that of Theorem~3.2 in \cite{BM} nearly verbatim, and therefore
	is omitted.	
	\begin{theorem}
	\label{word}
	Assume that $d>1.$ Let $M_r$ be the maximal number of states with $d-1$ loops
	in a simple path in $Au(v,r,k).$ Then  for any $r\geq 0,$ there exists a constant $C_r\in (0,\infty)$ and $K_r\geq 0$ such that
	\beqn
	\label{lw1}
	\lim_{n\to\infty} \frac{g_r^v(k, n)}{n^{M_r}(d-1)^n}=C_r\qquad \mbox{\rm  and}\qquad \lim_{n\to\infty} \frac{f_r^v(k, n)}{n^{M_r}(d-1)^n}=K_r.
	\feqn
	\end{theorem}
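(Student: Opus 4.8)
The plan is to read the asymptotics of $g_r^v(k,n)$ straight off the rational generating function provided by Lemma~\ref{gf}, and then to deduce the statement for $f_r^v(k,n)$ from the elementary identity $f_r^v=g_r^v-g_{r-1}^v$. By Lemma~\ref{gf}, $G_{k,n}^v(x)=\det B(x)/\prod_{i=1}^p(1-\lambda_i x)$, so every pole lies in $\{1/\lambda_i\}$, and by Lemma~\ref{lemloops} each $\lambda_i\le d-1$, with $\lambda=d-1$ attained at $\langle\epsilon\rangle$. Hence the pole of smallest modulus is $1/(d-1)$, exactly as already exploited in the proof of Theorem~\ref{thm:fnrlim_word}; the refinement needed here is to pin down the \emph{order} of this pole and to show its leading Laurent coefficient does not vanish.

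First I would compute the order of the pole at $1/(d-1)$ combinatorially, rather than by analysing the determinant $\det B(x)$ directly. Since (as noted just before Lemma~\ref{lemloops}) the graph of $\Au(v,r,k)$ has no cycles other than loops, every walk of length $n$ from $\langle\epsilon\rangle$ factors uniquely into a simple path together with a choice of how many times each loop encountered along it is repeated. This yields
\[
G_{k,n}^v(x)=\sum_{P} x^{q(P)}\Bigl(\prod_{e\in P}c_e\Bigr)\prod_{s\in P}\frac{1}{1-\lambda_s x},
\]
where $P$ ranges over simple paths from $\langle\epsilon\rangle$, $q(P)$ is the number of edges of $P$, $c_e$ the edge multiplicity of a transition $e$, and $\lambda_s$ the number of loops at $s$. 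Each summand has at $x=1/(d-1)$ a pole whose order is the number of $(d-1)$-loop states it passes through, so the dominant contribution comes from the simple path maximizing this number. By the transfer theorem the size of $g_r^v(k,n)$ is then $(d-1)^n$ times a polynomial in $n$ whose degree is read off from this maximal count, namely the exponent $M_r$ of the statement. Crucially, all Taylor coefficients in the display are nonnegative, so the dominant singular parts cannot cancel: the top-order partial-fraction coefficient is a sum of strictly positive terms (the remaining factors $1-\lambda_s x$ are nonzero at $1/(d-1)$ because there $\lambda_s<d-1$). This gives $g_r^v(k,n)\sim C_r\,n^{M_r}(d-1)^n$ with $C_r\in(0,\infty)$.

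Next I would pass from $g_r^v$ to $f_r^v$ via $f_r^v(k,n)=g_r^v(k,n)-g_{r-1}^v(k,n)$, with $g_{-1}^v\equiv 0$. Since $g_{r-1}^v\le g_r^v$ entrywise, comparing the two asymptotics forces $M_{r-1}\le M_r$. Dividing by $n^{M_r}(d-1)^n$ and letting $n\to\infty$, the first term tends to $C_r$ while the second tends to $0$ if $M_{r-1}<M_r$ and to $C_{r-1}$ if $M_{r-1}=M_r$. In either case the limit $K_r$ exists and, being a limit of the nonnegative quantities $f_r^v(k,n)/(n^{M_r}(d-1)^n)$, satisfies $K_r\ge 0$. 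This also explains why one can only assert $K_r\ge 0$ rather than $K_r>0$: when $M_{r-1}=M_r$ and the two leading coefficients happen to coincide, the dominant terms of $g_r^v$ and $g_{r-1}^v$ cancel and $K_r=0$.

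The main obstacle is the middle step, namely certifying that the pole of the expected order genuinely survives in $G_{k,n}^v(x)$ rather than being annihilated by the numerator $\det B(x)$. Working directly with $\det B(x)$ would require controlling delicate cancellations in a determinant of size $p=\#E(v,r,k)$; the purpose of the path-decomposition formula above is precisely to bypass this by exhibiting the leading coefficient as a manifestly positive sum indexed by the simple paths that realize $M_r$. Once this nonvanishing and the identification of the polynomial degree with $M_r$ are secured, what remains is the standard rational-generating-function asymptotics already invoked for Theorem~\ref{thm:fnrlim_word}, which is why the argument runs parallel to that of Theorem~3.2 in \cite{BM}.
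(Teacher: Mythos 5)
Your route is the intended one: the paper omits this proof, deferring to Theorem~3.2 of \cite{BM}, and your decomposition of every length-$n$ walk from $\langle\epsilon\rangle$ into a simple path decorated with loop repetitions, followed by partial fractions and the positivity argument that rules out cancellation among the dominant singular parts (each maximizing path contributes a manifestly positive leading Laurent coefficient, the non-critical factors $1-\lambda_s x$ being positive at $x=1/(d-1)$ since $\lambda_s<d-1$), is exactly that argument. Likewise, passing from $g_r^v$ to $f_r^v$ via $f_r^v=g_r^v-g_{r-1}^v$, extracting $M_{r-1}\le M_r$ from monotonicity, and diagnosing why only $K_r\ge 0$ can be asserted, is the intended conclusion of the proof.

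There is, however, one concrete slip at the transfer step. In your decomposition, a simple path $P$ passing through $m(P)$ states carrying $d-1$ loops contributes a pole of order $m(P)$ at $x_0=1/(d-1)$, so $G$ has a pole of order exactly $M_r$ there; but since $[x^n](1-(d-1)x)^{-M}=\binom{n+M-1}{M-1}(d-1)^n\sim \frac{n^{M-1}}{(M-1)!}(d-1)^n$, your own argument yields $g_r^v(k,n)\sim C_r\,n^{M_r-1}(d-1)^n$, not $n^{M_r}$: the sentence ``whose degree is read off from this maximal count, namely the exponent $M_r$'' conflates pole order with polynomial degree, which differ by one. You can test this against the paper's own Example~\ref{ae1}: for $v=123$, $k=3$, $r=1$, the function $G^{123}_{1,3}$ has a pole of order $3$ at $x=1/2$ (the numerator there equals $1/16\neq 0$), while the definition in the statement gives $M_1=3$ (the path $\langle\epsilon\rangle\to\langle1\rangle\to\langle11\rangle\to\langle112\rangle$ passes three states with two loops); hence $g_1^{123}(3,n)=\Theta(n^2\,2^n)$, i.e.\ exponent $M_1-1$. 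So with the normalization $n^{M_r}$ displayed in \eqref{lw1}, your analysis would actually force the limit to be $0$, contradicting $C_r\in(0,\infty)$. The statement itself appears to carry this off-by-one relative to the paper's own examples (Example~\ref{k1} with $r=0$ exhibits the same mismatch), and your write-up inherited it rather than detected it; since your derivation is self-contained, the inconsistency is visible inside it. The fix is to carry the exponent $M_r-1$ through (equivalently, to read $M_r$ as the maximal count minus one); with that correction, every other step of your proposal --- the unique path-plus-loops factorization on the loop-augmented DAG, the positivity that protects the pole from cancellation by $\det B(x)$, the fact that all other poles $1/\lambda_s$ with $\lambda_s<d-1$ have strictly larger modulus, and the differencing argument giving existence of $K_r\ge 0$ --- is sound and completes the proof.
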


	Note that $M_r\geq 1$ by Lemma~\ref{lemloops}. Through investigating various patterns with $d>1,$ we observed $K_r>0.$ Nevertheless, we believe that the following is true:
	\begin{conj*}
	\label{Kconj}
	There exist $k,r\in\nn$ and a pattern $v\in [k]^*$ such that $d>1$ and $K_r$ in \eqref{lw1} is equal to zero.
	In that case, there exists $L_r\in\nn,$ $L_r<M_r,$ and $\witi K_r\in (0,\infty)$ such that
	$\lim_{n\to\infty} \frac{f_r^v(k, n)}{n^{L_r}(d-1)^n}=\witi K_r.$
	\end{conj*}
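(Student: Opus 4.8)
The plan is to split the assertion into its two constituent claims and to attack both through the singularity analysis already deployed for Theorem~\ref{thm:fnrlim_word}. Throughout, write $F_{r,k}^v(x)=\sum_{n\ge 0}f_r^v(k,n)x^n$ and recall the telescoping identity $f_r^v(k,n)=g_r^v(k,n)-g_{r-1}^v(k,n)$ for $r\ge 1$, so that as rational functions on $\cc$ one has $F_{r,k}^v=G_{r,k}^v-G_{r-1,k}^v$. By Lemma~\ref{gf} each $G_{r,k}^v$ is rational with poles only at reciprocals of the loop counts $\lambda_i$, and by Lemma~\ref{lemloops} its dominant singularity is $x_0=\tfrac{1}{d-1}$. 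Theorem~\ref{word} identifies $C_r$ as the constant obtained by transferring the leading Laurent coefficient $c_r$ of $G_{r,k}^v$ at $x_0$ to an $n^{M_r}(d-1)^n$ asymptotic, and $K_r$ similarly from $F_{r,k}^v$. Comparing the two summands: if $M_r>M_{r-1}$ the term $G_{r-1,k}^v$ cannot contribute at the top order and $K_r=C_r>0$; if $M_r=M_{r-1}$ then $K_r=C_r-C_{r-1}$, which is $\ge 0$ because $g_r^v(k,n)\ge g_{r-1}^v(k,n)$ pointwise. Hence $K_r=0$ forces simultaneously $M_r=M_{r-1}$ \emph{and} the exact residue matching $C_r=C_{r-1}$.

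\textbf{The conditional refinement (the easy half).} Assume $K_r=0$. Then the top-order contributions of $G_{r,k}^v$ and $G_{r-1,k}^v$ at $x_0$ cancel in $F_{r,k}^v$, so the pole of $F_{r,k}^v$ at $x_0$ drops to a strictly smaller order; transferring the first surviving Laurent coefficient yields $f_r^v(k,n)\sim \witi{K}_r\,n^{L_r}(d-1)^n$ with $L_r<M_r$. Theorem~\ref{thm:fnrlim_word} guarantees $(f_r^v(k,n))^{1/n}\to d-1$, so $x_0$ cannot cancel out entirely and remains the dominant pole of $F_{r,k}^v$; thus $L_r\ge 0$, and $L_r\ge 1$ (so $L_r\in\nn$) exactly when $M_r\ge 2$, which must hold in any instance realizing $K_r=0$. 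Positivity $\witi{K}_r\in(0,\infty)$ is then automatic, since $f_r^v(k,n)\ge 0$ and the leading transferred coefficient of a nonnegative sequence with this precise growth order is positive. This half is a routine corollary of the analytic-combinatorics machinery underlying Lemma~\ref{gf}.

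\textbf{The existence claim (the crux).} Here one must actually exhibit $k,r\in\nn$ and a pattern $v$ with $d>1$, $M_r=M_{r-1}$, and $C_r=C_{r-1}$. The strategy is to search among patterns for which raising the occurrence threshold from $r-1$ to $r$ does not lengthen the longest chain of $(d-1)$-loop states in $\Au(v,r,k)$ (forcing $M_r=M_{r-1}$ via Lemma~\ref{lemloops}), and then to use Lemma~\ref{gf} to write $C_r$ and $C_{r-1}$ as explicit rational expressions in the automaton data --- essentially the cofactor-type determinant $\det B(x)$ evaluated near $x_0$ divided by the product over the non-dominant loop counts --- and tune the parameters so that these coincide. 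A natural candidate route is an infinite symmetric family (for instance a monotone pattern in a growing alphabet) for which a bijection or symmetry equates the two leading residues.

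The main obstacle is precisely the equality $C_r=C_{r-1}$: the residue $C_r$ is a \emph{global} invariant of the whole automaton $\Au(v,r,k)$, aggregating all maximal simple paths through $(d-1)$-loop states together with their branchings, and is not controlled by any local loop count. Consequently the matching cannot be reduced to the counting statement of Lemma~\ref{lemloops}, and indeed in every explicit pattern the authors examined the inequality $C_r>C_{r-1}$ was strict. Producing even a single instance of equality --- rather than merely arguing its plausibility --- is exactly why I expect this to be the genuinely hard, still-open part, in sharp contrast to the conditional refinement, which follows immediately from the singularity analysis above.
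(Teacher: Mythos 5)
The statement you were given is labeled as a \emph{Conjecture} in the paper, and the paper offers no proof of it: the authors explicitly say they observed $K_r>0$ in every example they investigated and merely \emph{believe} an instance with $K_r=0$ exists. So there is no paper proof to compare against, and judged as a proof of the full statement your proposal necessarily has a gap, which you yourself flag honestly: no triple $(k,r,v)$ with $d>1$ and $K_r=0$ is exhibited. Your proposed search program --- look for patterns with $M_r=M_{r-1}$ (so that, via the decomposition $F_{r,k}^v=G_{r,k}^v-G_{r-1,k}^v$, one has $K_r=C_r-C_{r-1}$) and then tune the automaton data of Lemma~\ref{gf} until the two leading residues coincide --- is a sensible reading of what would be required, and your diagnosis that the obstruction is the global, non-local nature of the residue $C_r$ is accurate; but it is a program, not an argument, and the existence claim remains exactly as open after your write-up as before it.

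Your treatment of the conditional second sentence is essentially sound and uses the same rational-function machinery underlying Theorems~\ref{thm:fnrlim_word} and~\ref{word}: all poles of $G_{r,k}^v$ sit at reciprocals of integer loop counts $\lambda_i\leq d-1$, so the dominant singularity $\frac{1}{d-1}$ is unique on its circle of convergence, and the bound $f_r^v(k,n)\geq c_r(d-1)^n$ from the proof of Theorem~\ref{thm:fnrlim_word} rules out total cancellation of that pole in $F_{r,k}^v$; standard transfer then yields $f_r^v(k,n)\sim \witi K_r\, n^{L_r}(d-1)^n$ with $L_r<M_r$ and $\witi K_r>0$ by nonnegativity. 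One genuine slip remains, however: you assert that $M_r\geq 2$ ``must hold in any instance realizing $K_r=0$,'' with no justification. Nothing in Lemma~\ref{lemloops} or Lemma~\ref{gf} excludes an instance with $M_r=1$ and $K_r=0$, in which case the surviving pole at $\frac{1}{d-1}$ is simple and $L_r=0$, which is not in $\nn$ under the paper's convention ($\nn_0=\nn\cup\{0\}$, so $\nn$ excludes $0$). This wrinkle is arguably inherited from the conjecture's own phrasing, but your conditional ``proof'' should either establish $M_r\geq 2$ in any realizing instance or weaken the conclusion to $L_r\in\nn_0$; as written, that step does not follow.
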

	It follows from the first limit identity in \eqref{lw1} that $M_r$ is a non-decreasing
	function of $r.$ If the previous conjecture is true, then $M_r$ is not always strictly increasing. We believe
	that the following is true:
	\begin{conj*}
	\label{mconj}
	\item [(a)] For any  $n,k\in\nn,$ and $v \in [k]^*$ with $d>1,$ $\lim_{r\to\infty} \frac{M_r}{r}$ exists and belongs to $(0,\infty).$
	\item [(b)] There exist $k\in\nn,$ a pattern $v\in [k]^*$ with $d>1,$ and an increasing sequence of integers $(r_n)_{n\in\nn},$ such that
	$M_{r_n}=M_{r_n-1}.$
	\end{conj*}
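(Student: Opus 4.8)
The plan is to recast $M_r$ as a purely combinatorial quantity on the directed acyclic graph underlying $\Au(v,r,k)$ after the self-loops are contracted: up to the additive offset implicit in Lemma~\ref{gf} (where a chain of states each carrying $d-1$ loops produces the factor $\prod(1-(d-1)x)$ governing the pole order), $M_r$ is the length of the longest topologically increasing path that passes through states carrying the maximal number $d-1$ of loops. Call these the \emph{saturated} states. Since appending any of the $d-1$ safe letters identified in the proof of Theorem~\ref{thm:fnrlim_word} fixes both the equivalence class and the value of $occ_v$ at a saturated state, each saturated state $\langle w\rangle$ carries a well-defined occurrence level $occ_v(w)=m\le r$, and $occ_v$ is non-decreasing along any path. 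I would therefore stratify every path into the occurrence \emph{layers} $m=0,1,\dots,r$ and study how many saturated states one path can collect in each layer; both parts of the conjecture are statements about how this count accumulates as $r\to\infty$.

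For the finite upper bound in part~(a), I would show that the number of saturated states of a fixed occurrence level $m$ lying on a single simple path is bounded by a constant $b=b(v,k)$ independent of $r$ and $m$. The point is that a saturated state of level $m$ is determined by its \emph{partial-match profile} — the bounded amount of information recording how far the current suffix is toward completing the next copy of $v$ — and the set of admissible profiles is the same finite set in every layer. Consequently $M_r+1\le b(r+1)$, giving $M_r=O(r)$, and in particular $\limsup_r M_r/r<\infty$.

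Existence of the limit and its positivity I would extract from an \emph{eventual periodicity} of this layered structure in the occurrence level. Because the partial-match profiles range over a fixed finite set, the reachable saturated configurations stabilize once $m$ is large, so that the sub-DAG of saturated states above level $m$ becomes isomorphic, as a labeled graph, to the one above level $m+\Delta$ for some period $\Delta\ge 1$ and all large $m$. Transporting an optimal saturated chain across one period then yields a near-superadditive estimate $M_{r+\Delta}\ge M_r+\kappa$ for a fixed gain $\kappa\ge 1$, which together with the linear upper bound of the previous step lets Fekete's lemma (applied along the arithmetic progression of step $\Delta$) deliver $\lim_r M_r/r=\kappa/\Delta\in(0,\infty)$. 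Positivity is guaranteed by $\kappa\ge 1$, which in turn follows from the explicit pumping family of Theorem~\ref{thm:fnrlim_word} that manufactures, for each unit increment of the occurrence budget, at least one further saturated state along a path.

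For part~(b) I would exhibit a pattern whose period $\Delta$ is genuinely larger than $1$: then $M_r$ can increase only at the layer boundaries that are multiples of $\Delta$, so $M_r=M_{r-1}$ for every $r$ strictly between two consecutive boundaries, producing the required increasing sequence $(r_n)$. A natural place to look is a pattern whose cheapest way to create one additional saturated configuration forces several new occurrences of $v$ at once; the resulting stalling is exactly the combinatorial shadow of the vanishing leading coefficient $K_r=0$ predicted in the first Conjecture, read off from the decomposition $g_r=g_{r-1}+f_r$ and the degree bookkeeping $\deg g_r=\max(\deg g_{r-1},\deg f_r)$. The main obstacle in both parts is making the eventual periodicity rigorous: one must verify that the finitely many partial-match profiles, constrained by $occ_v\le r$, really render the saturated sub-DAG periodic in the occurrence level, and in particular that cross-layer transitions cannot spawn unboundedly many saturated states per layer. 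Controlling these cross-layer effects — equivalently, guaranteeing that splicing two optimal chains inflates the occurrence count by only a bounded amount — is the delicate step on which the clean slope $\kappa/\Delta$, and hence the whole argument, ultimately rests.
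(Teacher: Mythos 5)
This statement is explicitly a \emph{conjecture} in the paper: the authors offer no proof of either part, and part (b) is intertwined with their other open conjecture (that $K_r=0$ can occur with some $L_r<M_r$). So there is no paper proof to compare against; the only question is whether your argument settles the open problem, and it does not --- it is a research program whose decisive steps are the very assertions you yourself flag as unverified. Your combinatorial reformulation is sound as far as it goes: each non-sink class $\langle w\rangle$ carries a well-defined level $occ_v(w)\le r$, non-decreasing along edges, and Theorem~\ref{word} defines $M_r$ exactly as the maximal number of $(d-1)$-loop states on a simple path. But the ``eventual periodicity'' of the saturated sub-DAG is not a routine finiteness observation: the automata $\Au(v,r,k)$ change with $r$, and a state at level $m$ encodes not only a bounded partial-match profile but also the remaining budget $r-m$. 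Already for $v=12$, $k=2$, the single level $m=0$ contains the $r+2$ pairwise inequivalent classes $\langle 1^j\rangle$, $0\le j\le r+1$, and which of them are saturated depends on $r$ (here $\langle\epsilon\rangle$ and $\langle 1^{r+1}\rangle$). Hence your claim that the sub-DAG above level $m$ is isomorphic to the one above level $m+\Delta$ is really a claim about compatibility maps across the whole family of automata indexed by $r$, not about layers within one automaton, and neither your sketch nor the paper supplies such maps.

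Even granting periodicity, the limit step is broken. An inequality $M_{r+\Delta}\ge M_r+\kappa$ with fixed $\kappa,\Delta$ yields only $\liminf_{r\to\infty} M_r/r\ge \kappa/\Delta$; Fekete's lemma requires (approximate) superadditivity $M_{r+s}\ge M_r+M_s-C$, which is exactly the chain-splicing estimate you concede may fail because gluing two optimal chains can inflate the occurrence count unboundedly, and to pin the limit at the specific value $\kappa/\Delta$ you would additionally need the matching upper increment $M_{r+\Delta}\le M_r+\kappa$, which you never address. Likewise the per-layer bound $b(v,k)$ on saturated states along one path is asserted via the ``profile'' heuristic but never proved, and the example above shows the number of classes per layer genuinely grows with the budget, so saturation must be argued, not assumed. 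For part (b) you exhibit no pattern with $\Delta>1$: the existence of such a pattern is precisely the content of the conjecture (and, as you note, is the combinatorial shadow of the open claim $K_r=0$), so the argument is circular at the one point where it would have to be constructive. In short: a reasonable plan with a correct reformulation of $M_r$, but both the structural periodicity and the construction demanded by (b) remain open, exactly as the authors intend.
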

	We conclude this section with a remark that Theorems~\ref{thm:fnrlim_word} and~\ref{word} can be interpreted as large deviation
	estimates for $occ_v(w)$ when $w\in[k]^n$ is chosen at random, see Section~\ref{new} below for details.
	
	\subsection{Weak pattern avoidance}
	\label{weaka}
	In this section, we further investigate the asymptotic behavior of the sequence $(f_r^v(k,n))_{r\in \nn_0}.$ It turns
	out that the generating function of this sequence, as defined by \eqref{GF}, 
	can be linked to a natural concept of ``weak avoidance" that may be of independent interest. The weak avoidance is defined in a fashion similar to the notion
	of the weakly self-avoiding random walks \cite{saw}, namely by introducing a penalty for the non-avoidance rather
	than completely striking off the possibility of a pattern occurrence. 
	\par 	
	Formally speaking, for a pattern $v\in [k]^*$, we associate a sequence of penalty functions $c^v_{k,n}:[0,1]\to [0,k^n],$ $n\in\nn,$ as follows:
	\beqn
	\label{cw}
	c^v_{k,n}(x) = \sum_{w \in [k]^n}\, \prod_{1\leq j_1 < \cdots < j_\ell \leq n} \left( 1 + x U_{j_1,\cdots, j_\ell} (v,w) \right),
	\feqn
	where
	\beq
	U_{j_1, \cdots, j_\ell}(v, w) =
	\left\{
	\begin{array}{cl}
	-1 & \quad \text{if} \quad ( w_{j_q} \leq w_{j_r} ~ \Longleftrightarrow ~ v_q \leq v_r)\\
	0 & \quad \text{otherwise.}
	\end{array}
	\right.
	\feq
	It follows from \eqref{cw} that
	\beqn
	\label{th2words}
	c^v_{k,n}(x) =\sum_{w \in [k]^n} (1-x)^{occ_v(w)}=\sum_{r\geq 0} f_r^v(k,n)(1-x)^r.
	\feqn
	Thus $c^v_{k,n}(x) =F_{k,n}^v(1-x).$ According to the definition in \eqref{cw}, the function $c^v_{k,n}(x)$
	can be considered as a partition function counting the words in $[k]^n$ with weights penalizing occurrences of the pattern $v.$
	Note that $c^v_{k,n}(x)$ is a decreasing function of $x,$ $c_{k,n}^v(0) = k^n$ counts all words without discrimination,
	and on the opposite extreme $c_{k,n}^v(1) = f_0^v(k, n)$ counts only words avoiding the pattern entirely. The parameter $x\in [0,1]$ can
	be therefore interpreted as an intensity or strength of the pattern avoidance.
	\par
	The subsequent Section~\ref{new} is devoted to the study of the asymptotic behavior of the sequence $X_n=occ_v(W_n),$ $n\in\nn,$
	where $W_n=w_1,\cdots,w_n\in [k]^n$ and $w_i$ are i.\,i.\,d. random variables, each one distributed uniformly over $[k].$ The asymptotic behavior 
	of random variables $X_n=occ_v(W_n)$ in the case when the sequence $(w_i)_{i\in\nn}$ is drown at random from non-product
	probability measures on $[k]^\nn$ is beyond the topic of this paper and will be studied by the authors elsewhere. The only exception in this paper is Theorem~\ref{inv1} where, following a canonical construction in the theory of self-avoiding random walks \cite{saw}, we study $X_n$ in the case when $W_n$ is chosen at random according to the probability law
	\beqn
	\label{qword}
	\qq^{v,x}_{k,n}(A)=\frac{1}{c^v_{k,n}(x)}\sum_{w \in A}(1-x)^{occ_v(w)},\qquad A\subset [k]^n.
	\feqn
	Here $x$ is a parameter which ranges within the interval $[0,1].$
	Clearly, $\qq^{v,x}_{k,n}(\,\cdot\,)$ is not uniform on $[k]^n,$ it penalizes words $w$ with a non-zero $occ_v(w)$ by the factor $(1-x)^{occ_v(w)}$
	which depends on the parameter $x\in (0,1).$ This probability measure belongs to a general class of Boltzmann distributions intensively studied in statistical mechanics
	and combinatorics, cf. \cite{bolt}. In Theorem~\ref{inv1} we study $\qq^{v,x}_{k,n}$ in a certain small parameter regime where $x=x_n=o(1)$ decays fast, and consequently, $\qq^{v,x_n}_{k,n}$ can be considered as a perturbation of the uniform probability measure over $[k]^n.$
	\par
	We conclude this section with an analogue of Theorem~\ref{thm:fnrlim_word} for $c_{k,n}^v(x).$
	It follows from Theorem~\ref{thm:fnrlim_word} that for all $x\in[0,1],$
	\beqn
	\label{wlima}
	d-1\leq \liminf_{n\to \infty}(c_{k,n}^v(x))^{\frac{1}{n}} \leq  \limsup_{n\to \infty}(c_{k,n}^v(x))^{\frac{1}{n}}\leq k,
	\feqn
	where $d$ is the number of distinct letters in the pattern $v.$ We have:
	\begin{proposition}
	\label{th1words}
	Given a pattern $v\in[k]^\ell,$ $\lim_{n\to\infty}(c_{k,n}^v(x))^{\frac{1}{n}}$ exists and lies within the closed interval $[d-1,k]$ for all $x\in [0,1].$ 
	\end{proposition}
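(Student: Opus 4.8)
The plan is to establish existence of the limit $\lim_{n\to\infty}(c_{k,n}^v(x))^{1/n}$ via a subadditivity (or rather, supermultiplicativity) argument, since \eqref{wlima} already furnishes the two-sided bound $[d-1,k]$ and thus reduces the task to proving that the $\limsup$ and $\liminf$ coincide. First I would fix $x\in[0,1]$ and define $a_n=c_{k,n}^v(x)=\sum_{w\in[k]^n}(1-x)^{occ_v(w)}$. The natural strategy is to show that the sequence $(a_n)_{n\in\nn}$ is supermultiplicative, i.e. $a_{n+m}\geq a_n\,a_m$, and then invoke Fekete's lemma applied to $\log a_n$: supermultiplicativity of $(a_n)$ is equivalent to superadditivity of $(\log a_n)$, which guarantees that $\lim_{n\to\infty}\frac{1}{n}\log a_n=\sup_{n}\frac{1}{n}\log a_n$ exists (possibly $+\infty$, but here it is finite and bounded by $\log k$ via \eqref{wlima}). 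Exponentiating gives the existence of $\lim_{n\to\infty}a_n^{1/n}$.

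The key step is verifying supermultiplicativity. Given words $w\in[k]^n$ and $w'\in[k]^m$, consider the concatenation $ww'\in[k]^{n+m}$. Every occurrence of $v$ using indices lying entirely within the first block $\{1,\dots,n\}$ is an occurrence in $w$, and every occurrence using indices lying entirely within the second block $\{n+1,\dots,n+m\}$ corresponds to an occurrence in $w'$; there may in addition be \emph{crossing} occurrences that use letters from both blocks. Hence $occ_v(ww')\geq occ_v(w)+occ_v(w')$, and since $0\leq 1-x\leq 1$, this yields $(1-x)^{occ_v(ww')}\leq (1-x)^{occ_v(w)}(1-x)^{occ_v(w')}$ --- which is the \emph{wrong} direction. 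This sign issue is the main obstacle, and it means that $(a_n)$ is in fact expected to be \emph{sub}multiplicative rather than supermultiplicative.

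The remedy is to work with submultiplicativity instead. From $occ_v(ww')\geq occ_v(w)+occ_v(w')$ and $0\leq 1-x\leq 1$ I obtain, summing over the product structure $[k]^{n+m}\supset [k]^n\times[k]^m$ (each $u\in[k]^{n+m}$ factoring uniquely as $u=ww'$ with $w\in[k]^n$, $w'\in[k]^m$),
\begin{align*}
a_{n+m}=\sum_{w\in[k]^n}\sum_{w'\in[k]^m}(1-x)^{occ_v(ww')}\leq \sum_{w\in[k]^n}\sum_{w'\in[k]^m}(1-x)^{occ_v(w)}(1-x)^{occ_v(w')}=a_n\,a_m.
\end{align*}
Thus $(\log a_n)$ is subadditive, and Fekete's subadditivity lemma gives that $\lim_{n\to\infty}\frac{1}{n}\log a_n=\inf_n\frac{1}{n}\log a_n$ exists in $[-\infty,\infty)$. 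Exponentiating, $\lim_{n\to\infty}a_n^{1/n}$ exists, and by \eqref{wlima} it lies in $[d-1,k]$; in particular it is finite and strictly positive (since $a_n\geq f_0^v(k,n)$ grows like $(d-1)^n$ by Theorem~\ref{thm:fnrlim_word}, the infimum cannot be $-\infty$). The one routine check I would include is the unique factorization of a length-$(n+m)$ word into its length-$n$ prefix and length-$m$ suffix, which makes the double sum above legitimate; everything else follows from the elementary inequality $occ_v(ww')\geq occ_v(w)+occ_v(w')$ together with the monotonicity of $t\mapsto(1-x)^t$ for $t\geq 0$ and $1-x\in[0,1]$.
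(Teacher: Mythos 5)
Your proof is correct and takes essentially the same route as the paper: the paper also proves submultiplicativity $c_{k,n+m}^v(x)\leq c_{k,n}^v(x)\,c_{k,m}^v(x)$ --- by discarding the cross-block factors $1+xU_{j_1,\cdots,j_\ell}(v,w)\in[0,1]$ in the defining product, which is exactly your inequality $occ_v(ww')\geq occ_v(w)+occ_v(w')$ in disguise --- and then concludes via Fekete's subadditive lemma combined with the bounds in \eqref{wlima}. Your initial detour through supermultiplicativity and its correction is just exposition; the final argument coincides with the paper's.
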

	\begin{proof}
	By the definition, for any $x\in [0,1],$ $w\in [k]^{\geq \ell},$ and an increasing sequence of indices $j_i,$ $1\leq i \leq \ell,$ we have		
	\beq
	0\leq 1+xU_{j_1,\cdots, j_\ell} (v,w)\leq 1.
	\feq
	Therefore, for any $n,m\in\nn$ and $x\in [0,1],$
	\beq
	&& c_{k,n+m}^v(x) = \sum_{w \in [k]^{n+m}}\, \prod_{1\leq j_1 < \cdots < j_\ell \leq n+m} \bigl( 1 + x U_{j_1,\cdots, j_\ell} (v,w) \bigr)
	\\
	&&
	\quad
	\leq  \sum_{w \in [k]^{n+m}}\, \prod_{1\leq j_1 < \cdots < j_\ell\leq m} \bigl( 1 + x U_{j_1,\cdots, j_\ell} (v,w) \bigr)
	\prod_{n+1\leq j_1 < \cdots < j_\ell \leq n+m}\bigl( 1 + x U_{j_1,\cdots, j_\ell} (v,w) \bigr) \\
			&&
	\quad = \Bigl(\sum_{w_1 \in [k]^m}\, \prod_{1\leq j_1 < \cdots < j_\ell \leq m} \bigl( 1 + x U_{j_1,\cdots, j_\ell} (v,w_1) \bigr)\Bigr)\times
	\\
	&&
	\quad \qquad \qquad \qquad
	\Bigl(\sum_{w_2 \in [k]^n}\, \prod_{1\leq j_1 < \cdots < j_\ell \leq n} \bigl( 1 + x U_{j_1,\cdots, j_\ell} (v,w_2) \bigr)\Bigr)
	\\
	&&
	\quad
	=c_{k,m}^v(x)c_{k,n}^v(x).
	\feq
	Hence $\log c_{k,n}^v(x),$ $n\in\nn,$ is a subadditive sequence, and the claim of the proposition follows from Fekete's subadditive lemma 
	and the estimates in \eqref{wlima}.
	\end{proof}
	\begin{example}
	\label{inv}
	Let us consider $v=21.$ In order to avoid the pattern $v$, the letters of a word $w\in [k]^n$ must be arranged in the non-decreasing order. Therefore, $f^{21}_0(k,n)=\binom{n+k-1}{k-1},$ the number of ways to write $n$ as a weak composition $n=a_1+\cdots+a_k,$ where $a_i\geq 0$ represents the number of occurrences of the letter $i\in [k]$ in a $k$-ary word of length $n.$ Furthermore,  by Theorem ~\ref{thm:fnrlim_word}, $\lim_{n\to\infty} \bigl( f^{21}_r(k,n)\bigr)^{1/n}=1$ for all integer $r\geq 0.$ Though a simple explicit expression for $f^{21}_r(k,n)$ is not known, a result on generating functions due to MacMahon (see, for instance, Theorem~3.6 in \cite{abook}) combined with \eqref{th2words} shows that for $x\in(0,1],$
	\beqn
	\label{cg}
	\nonumber
	c_{k,n}^{21}(x)&=&\sum_{r\geq 0} f^{21}_r(k,n)(1-x)^r=\, \sum_{\{a_j\geq 0:\, a_1+\ldots+a_k=n\}}\,\frac{\prod_{j=1}^n\bigl(1-(1-x)^j\bigr)}
	{\prod_{i=1}^k \prod_{j=1}^{a_i}\bigl(1-(1-x)^j\bigr)}
	\\
	\nonumber
	&\leq&
	\sum_{\{a_j\geq 0:\, a_1+\ldots+a_k=n\}}\,\frac{\prod_{j=1}^n\bigl(1-(1-x)^j\bigr)}
	{\bigl(1-(1-x)\bigr)^{k-1}\prod_{j=1}^{n-k+1}\bigl(1-(1-x)^j\bigr)}
	\\
	&<&
	\frac{1}{x^{k-1}}\binom{n+k-1}{k-1} .
	\feqn
	The first inequality in \eqref{cg} follows readily from the fact that
	\beq
	(1-s^i)(1-s^j)\geq (1-s^{i-1})(1-s^{j+1})\qquad \forall\,s\in (0,1)
	\feq
	as long as $i\leq j+1.$ Combining \eqref{cg} with the trivial inequality $c^{21}_{k,n}> f^{21}_0(k,n),$ we obtain that $\binom{n+k-1}{k-1}< c^{21}_{k,n}(x)<\frac{1}{x^{k-1}}\binom{n+k-1}{k-1}$ for all $x\in (0,1).$  Remark that a straightforward improvement
	of the lower bound for $c^{21}_{k,n}(x)$ is
	\beq
	c_{k,n}^{21}(x)&=&\,\sum_{\{a_j\geq 0:\, a_1+\ldots+a_k=n\}}\,\frac{\prod_{j=1}^n\bigl(1-(1-x)^j\bigr)}
	{\prod_{i=1}^k \prod_{j=1}^{a_i}\bigl(1-(1-x)^j\bigr)}
	\\
	&\geq&
	\sum_{\{a_j\geq 0:\, a_1+\ldots+a_k=n\}}\,\frac{\prod_{j=1}^n\bigl(1-(1-x)^j\bigr)}
	{\Bigl(\prod_{j=1}^{\lfloor n/k \rfloor+1}\bigl(1-(1-x)^j\bigr)\Bigr)^k}
	\\
	&\geq &\binom{n+k-1}{k-1}
	\,\frac{\prod_{j=1}^n\bigl(1-(1-x)^j\bigr)}{\Bigl(\prod_{j=1}^{\lfloor n/k \rfloor+1}\bigl(1-(1-x)^j\bigr)\Bigr)^k},
	\feq
	where $\lfloor a \rfloor$ denotes the integer part of $a\in\rr.$ Combining this lower bound with \eqref{cg}, we obtain that for all $x\in(0,1),$
	\beqn
	\label{poi}
	 \frac{\left(\varphi(1-x)\right)^{k-1}}{(k-1)!}\leq \liminf_{n\to\infty} \frac{c_{k,n}^{21}(x)}{n^{k-1}}\leq\limsup_{n\to\infty} \frac{c_{k,n}^{21}(x)}{n^{k-1}}\leq \frac{1}{x^{k-1}(k-1)!},
	\feqn
	where $\varphi(x)$ is the Euler generating function $\prod_{j=1}^\infty \frac{1}{1-x^j}.$ Notice that the lower and upper bounds in \eqref{poi} match asymptotically when $x\to 1.$
	\end{example}
	
	\subsection{Random words}
	\label{new}
	Let $(w_i)_{i\in\nn}$ be a sequence of independent random variables, each distributed uniformly on $[k]$ and let $v\in [k]^\ell$ be a word pattern,
	$\ell\geq 2.$ Denote $W_n=w_1w_2\cdots w_n\in[k]^n$ for $n\in\nn,$ and let $W=w_1w_2\cdots $ be the infinite string compound from the successive letters in the sequence.
	In this section we study the asymptotic behavior of the random variable	$X_n=occ_v(W_n)$. Note that for all $r\in\nn_0,$
	\beq
	P_n(r):=P(X_n=r)=\frac{1}{k^n}f_r^v(k,n).
	\feq 
	We start with a corollary to Theorem~\ref{thm:fnrlim_word} that is concerned 
	with the asymptotic behavior  of the information entropy of $X_n,$ when $n$ tends to infinity. Let 
	\beq
	H_{k,v}(n)=-\sum_{r\geq 0} P_n(r)\log P_n(r)
	\feq
	be the entropy of the random variable $X_n.$ The following theorem shows that $H_{k,v}(n)$ grows linearly with $n$ 
	and gives the exact rate of growth for an arbitrary pattern $v$ with $d>1.$   
	\begin{theorem}
	\label{entropy}
	Assume that $d>1.$ Then, 
	\beq 
	\lim_{n\to\infty} \frac{H_{k,v}(n)}{n} =\log\frac{k}{d-1}.
	\feq
	\end{theorem}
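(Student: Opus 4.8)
The plan is to estimate the entropy $H_{k,v}(n) = -\sum_{r\geq 0} P_n(r)\log P_n(r)$ by exploiting the fact that $P_n(r) = k^{-n} f_r^v(k,n)$ and that, by Theorem~\ref{thm:fnrlim_word}, each $f_r^v(k,n)$ grows like $(d-1)^n$ up to subexponential factors. The key structural fact I would use is that $X_n$ is supported on $\{0,1,\ldots,\binom{n}{\ell}\}$, so the number of nonzero terms in the entropy sum is at most polynomial in $n$ (of order $n^\ell$). This polynomial bound on the support size is what makes the entropy computable, since it forces the $\log$ of the support size to be $O(\log n) = o(n)$, and hence negligible after dividing by $n$.

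First I would split $\log P_n(r) = \log f_r^v(k,n) - n\log k$ and write
\beq
H_{k,v}(n) = -\sum_{r\geq 0} P_n(r)\bigl(\log f_r^v(k,n) - n\log k\bigr)
= n\log k - \sum_{r\geq 0} P_n(r)\log f_r^v(k,n).
\feq
Since $\sum_r P_n(r) = 1$, the first term contributes exactly $n\log k$. The task then reduces to showing that $\frac{1}{n}\sum_{r\geq 0} P_n(r)\log f_r^v(k,n) \to \log(d-1)$. For this I would establish matching upper and lower bounds on $\log f_r^v(k,n)$ that are uniform enough in $r$ to survive the averaging against the probability weights $P_n(r)$.

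For the upper bound, note that $f_r^v(k,n) \leq g_r^v(k,n) \leq k^n$ trivially, but more usefully $f_r^v(k,n) \leq \sum_{j} f_j^v(k,n) = k^n$, and by Theorem~\ref{thm:fnrlim_word} together with Theorem~\ref{word} one has $f_r^v(k,n) \leq C n^{M}(d-1)^n$ for suitable constants, at least for each fixed $r$; the subtlety is that $M_r$ and $C_r$ depend on $r$. To handle the full sum I would instead bound $\log f_r^v(k,n) \leq \log k^n = n\log k$ crudely where needed and control the contribution of large $r$ using the polynomial support bound, while for the dominant contribution I would use $\log f_r^v(k,n) = n\log(d-1) + o(n)$. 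For the lower bound direction, the construction in the proof of Theorem~\ref{thm:fnrlim_word} gives $f_r^v(k,n) \geq c_r (d-1)^n$, so $\log f_r^v(k,n) \geq n\log(d-1) + \log c_r$. The main obstacle is making these bounds uniform across the $r$-values that carry nonnegligible probability mass: I expect the cleanest route is to show that the probability mass $P_n(r)$ concentrates (in the sense relevant for the average) on a range of $r$ where $\frac{1}{n}\log f_r^v(k,n) \to \log(d-1)$, and to argue that the remaining mass contributes $o(n)$ to the entropy because the number of terms is only polynomial in $n$.

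The hard part will be controlling the averaged quantity $\frac{1}{n}\sum_r P_n(r)\log f_r^v(k,n)$ rather than a single term, since Theorem~\ref{thm:fnrlim_word} gives the limit of $\frac{1}{n}\log f_r^v(k,n)$ only for each \emph{fixed} $r$, whereas the entropy sum ranges over all $r$ up to $\binom{n}{\ell}$. I would resolve this by sandwiching: using $\liminf$ and $\limsup$ bounds that hold uniformly, namely $n\log(d-1) + O(1) \leq \log f_r^v(k,n) \leq n\log k$ over the nonzero terms, together with the fact that only $O(n^\ell)$ terms are nonzero so that entropy contributions from the $\log k$ side can be refined using a more careful concentration estimate for $X_n$ (for instance, the CLT of Theorem~\ref{wclt}, which localizes $X_n$ near its mean $\mu_n = \Theta(n^\ell)$). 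Once it is shown that the bulk of the probability sits on $r$ with $\frac{1}{n}\log f_r^v(k,n) = \log(d-1) + o(1)$, the conclusion $\frac{1}{n} H_{k,v}(n) \to \log k - \log(d-1) = \log\frac{k}{d-1}$ follows immediately.
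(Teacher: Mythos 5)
There is a genuine gap, and it is fatal not only to your argument but to the statement itself; in fact, your own ``key structural fact'' refutes the theorem. Since $X_n$ is supported on $\{0,1,\ldots,\binom{n}{\ell}\}$, Jensen's inequality gives $H_{k,v}(n)\leq \log\bigl(\tbinom{n}{\ell}+1\bigr)=O(\log n)$, hence $\frac{H_{k,v}(n)}{n}\to 0$; but $d\leq k$ forces $\log\frac{k}{d-1}\geq\log\frac{k}{k-1}>0$, so the claimed limit cannot hold. Equivalently, after your (correct) reduction, the identity $\sum_{r\geq 0}P_n(r)\log f_r^v(k,n)=n\log k-H_{k,v}(n)\geq n\log k-\ell\log n-O(1)$ shows that the averaged quantity converges to $\log k$, not to $\log(d-1)$. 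The specific step of yours that fails is the concentration step: by Theorem~\ref{wclt} the mass of $P_n$ sits at $r=\Theta(n^\ell)$, escaping every fixed $r$, and on that bulk $\log f_r^v(k,n)=n\log k+\log P_n(r)=n\log k-O(\log n)$ in the averaged sense --- nowhere near $n\log(d-1)$. Theorem~\ref{thm:fnrlim_word} governs only fixed $r$, and for every fixed $R$ one has $P(X_n\leq R)\to 0$ (since $f_r^v(k,n)/k^n\leq Cn^{M_r}\bigl(\frac{d-1}{k}\bigr)^n\to 0$), so those $r$ contribute nothing to the average. Your sandwich $n\log(d-1)+O(1)\leq\log f_r^v(k,n)\leq n\log k$ has a gap of width exactly $n\log\frac{k}{d-1}$, the quantity at stake, and the mass concentrates at the $\log k$ end of it, not the $\log(d-1)$ end.

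For comparison, the paper's own proof performs the same decomposition $H_{k,v}(n)=n\log k-\sum_{r\geq 0}P_n(r)\log f_r^v(k,n)$ and then concludes ``from Theorem~\ref{thm:fnrlim_word} and a discrete version of the bounded convergence theorem.'' This breaks at exactly the point you flagged as the hard part: a bounded-convergence argument needs the underlying measures not to leak mass to infinity, whereas here $P_n(r)\to 0$ for every fixed $r$ while the bulk drifts to $r\sim\mu_n=\Theta(n^\ell)$, so pointwise convergence of $\frac{1}{n}\log f_r^v(k,n)$ says nothing about the average. Your instinct that uniformity over the $r$ carrying the mass was the crux was sound; the problem is that no patch can succeed, because the averaged quantity genuinely tends to $\log k$ and $H_{k,v}(n)=O(\log n)$. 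The same support-size objection applies verbatim to the entropy claim in part (c)(ii) of Theorem~\ref{inv1}, since under $\qq^{v,1/\rho_n}_{k,n}$ the variable $X_n$ is still supported on at most $\binom{n}{\ell}+1$ values. The salvageable statements in this direction concern the scale $\log n$ rather than $n$, e.g.\ $H_{k,v}(n)=O(\log n)$, with the constant plausibly identifiable via a local CLT refinement of Theorem~\ref{wclt}.
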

	\begin{proof}
	We have 
	\beq
	H_{k,v}(n)&=&-\sum_{r\geq 0} P_n(r)\log P_n(r)=-\frac{1}{k^n}\sum_{r\geq 0} f_r^v(k,n)\bigl(\log f_r^v(k,n)-n\log k\bigr)
	\\
	&=&
	n\log k-\sum_{r\geq 0} P_n(r) \log f_r^v(k,n).
	\feq
	Thus 
	\beq
	\frac{H_{k,v}(n)}{n} =\log k-\sum_{r\geq 0} P_n(r)\frac{\log f_r^v(k,n)}{n},
	\feq
	and the result follows from Theorem~\ref{thm:fnrlim_word} and a discrete version of the bounded convergence theorem. 
	\end{proof}
	\par       
	Our next result is a central limit theorem for $X_n$ which asserts that, as $n$ tends to infinity,
	$X_n$ is highly concentrated at $E(X_n)=\binom{n}{\ell}\binom{k}{d}\frac{1}{k^\ell}$ with standard deviation of order $\frac{1}{\sqrt{n}}E(X_n).$ The fact that, exactly as in the classical case of partial  sums of i.\,i.\,d. variables, typical fluctuations of $X_n$ are of order $\frac{1}{\sqrt{n}}E(X_n)$ will be often exploited in the rest of this section.  The proof follows closely that of Theorem~2 in \cite{bona}, a similar CLT for pattern occurrences in permutations. It is based on an application of a general CLT for dependent variables due to \cite{janson}, and hence, it relies on an accurate estimation of $\mbox{VAR}(X_n).$  Given the variance estimate and a general result in \cite{becite}, the CLT can be strengthen to a Berry-Esseen type result providing the classical $O(n^{-1/2})$ rate of convergence, see Corollary~\ref{bess} below.   
	\begin{theorem}
	\label{wclt}
	Let $\mu_n=E(X_n)$ and $\sigma_n=\sqrt{\text{VAR}(X_n)}.$ Then $\mu_n=\binom{n}{\ell}\binom{k}{d}\frac{1}{k^\ell},$
	$\sigma_n=\Theta\bigl(\frac{\mu_n}{\sqrt{n}}\bigr),$ and $\frac{X_n-\mu_n}{\sigma_n}$ converges in distribution, as $n\to\infty,$
	to a standard normal random variable.
	\end{theorem}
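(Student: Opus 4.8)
The plan is to realize $X_n$ as a sum of weakly dependent indicators and to apply the dependency–graph central limit theorem of \cite{janson}, following the proof of Theorem~2 in \cite{bona} almost verbatim. Index the $\binom{n}{\ell}$ potential occurrence sites by the increasing tuples $i=(j_1<\cdots<j_\ell)$ of positions in $[n]$, and let $X_{n,i}$ be the indicator that $(w_{j_1},\ldots,w_{j_\ell})$ is order-isomorphic to $v$, so $X_n=\sum_i X_{n,i}$. Since $w_1,\ldots,w_n$ are i.\,i.\,d.\ uniform on $[k]$, the tuple at a fixed site matches $v$ exactly when the $d$ distinct values it uses are selected from $[k]$ and placed in the order dictated by $v$; there are $\binom{k}{d}$ admissible value-sets (the placement being then forced, as $v$ uses exactly $d$ distinct letters) out of $k^\ell$ equally likely tuples. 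Linearity of expectation gives $\mu_n=\binom{n}{\ell}\binom{k}{d}k^{-\ell}=\Theta(n^\ell)$, which is the asserted formula.

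The core of the argument is the variance estimate $\sigma_n^2=\Theta(n^{2\ell-1})$, equivalently $\sigma_n=\Theta(\mu_n/\sqrt n)$. For the upper bound, $X_{n,i}$ and $X_{n,i'}$ depend on disjoint sets of letters, hence are independent, whenever the tuples $i,i'$ are disjoint; thus $\text{VAR}(X_n)=\sum_{i,i'}\text{Cov}(X_{n,i},X_{n,i'})$ is supported on the $O(n^{2\ell-1})$ pairs sharing a coordinate, and each covariance is at most $1$, so $\text{VAR}(X_n)=O(n^{2\ell-1})$. For the matching lower bound I would use the first-order (Hajek) projection of the $U$-statistic $X_n$. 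Writing $q_a(c)$ for the probability that a site is an occurrence given that the letter in its $a$-th slot equals $c$ (independent of the site by exchangeability), the projection onto the letter at position $m$ is $g_m(c)=\sum_{a=1}^\ell\binom{m-1}{a-1}\binom{n-m}{\ell-a}\bigl(q_a(c)-\mu_n/\binom{n}{\ell}\bigr)$, and orthogonality of the Hoeffding components yields $\text{VAR}(X_n)\ge\sum_{m=1}^n\text{VAR}\bigl(g_m(w_m)\bigr)$. A Riemann-sum estimate shows this is asymptotic to $n^{2\ell-1}\int_0^1 \mathbf u(\alpha)^{\!\top}\Sigma\,\mathbf u(\alpha)\,d\alpha$, where $\Sigma$ is the covariance matrix of $\bigl(q_1(w_1),\ldots,q_\ell(w_1)\bigr)$ and $u_a(\alpha)=\alpha^{a-1}(1-\alpha)^{\ell-a}/\bigl((a-1)!(\ell-a)!\bigr)$.

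The main obstacle is to show this leading coefficient is strictly positive, i.e.\ that $\Sigma\neq 0$ (non-degeneracy of the $U$-statistic). Since the functions $u_a(\alpha)$ are linearly independent, the curve $\alpha\mapsto\mathbf u(\alpha)$ lies in the kernel of the positive-semidefinite matrix $\Sigma$ for only finitely many $\alpha$ unless $\Sigma=0$; hence the integral is positive as soon as some $q_a(\cdot)$ is non-constant. I would verify this for $a=1$: because $d>1$, the first letter $v_1$ cannot be simultaneously the least and the greatest letter of $v$, so $q_1$ vanishes at one endpoint of $[k]$ (at $c=1$ if $v_1$ is not the least letter of $v$, and at $c=k$ if it is), while $\frac1k\sum_{c}q_1(c)=\mu_n/\binom{n}{\ell}>0$ forces $q_1$ to be positive somewhere; thus $q_1$ is non-constant, $\text{VAR}(q_1(w_1))>0$, and $\sigma_n^2=\Theta(n^{2\ell-1})$.

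With the variance pinned down, the CLT follows from \cite{janson}. On the sites $i$ form the dependency graph in which $i\sim i'$ iff the tuples intersect; its maximal degree is $D=\Theta(n^{\ell-1})$, the number of sites is $N=\Theta(n^\ell)$, and each centered summand is bounded by $1$. Janson's bound on the semiinvariants then gives, for every integer $s\ge 3$,
\beq
\Bigl|\frac{\kappa_s(X_n)}{\sigma_n^s}\Bigr|\le C_s\,\frac{N\,D^{s-1}}{\sigma_n^s}=\Theta\bigl(n^{1-s/2}\bigr)\longrightarrow 0,
\feq
so all cumulants of $(X_n-\mu_n)/\sigma_n$ beyond the second vanish in the limit while the first two are $0$ and $1$; since the standard normal is determined by its moments, $(X_n-\mu_n)/\sigma_n$ converges in distribution to $N(0,1)$. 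Feeding the same variance estimate into the general Berry–Esseen bound of \cite{becite} yields the $O(n^{-1/2})$ rate recorded in Corollary~\ref{bess}.
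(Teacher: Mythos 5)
Your proposal is correct, and it follows the same overall skeleton as the paper's proof: the decomposition $X_n=\sum_i X_{n,i}$ over the $\binom{n}{\ell}$ sites, the mean computation $\mu_n=\binom{n}{\ell}\binom{k}{d}k^{-\ell}$, and the conclusion via the cumulant machinery of Janson's dependency-graph CLT (the paper defers this last step to B\'{o}na's argument verbatim; your explicit bound $|\kappa_s(X_n)|/\sigma_n^s=\Theta(n^{1-s/2})$ for $s\geq 3$ is precisely what that argument produces). The genuine divergence is in the variance lower bound, which constitutes the bulk of the paper's proof. The paper works directly with the second moment: it writes $X_n^2=\sum_{s=0}^\ell A_s$ according to the overlap size of pairs of sites, computes $E(A_0)-\mu_n^2=-\Theta(n^{2\ell-1})$, and then lower-bounds $E(A_1)$ by counting overlap-one configurations in which the shared letter occupies the same relative slot in both sites, using Cauchy--Schwarz together with a Chu--Vandermonde identity; this yields $\text{VAR}(X_n)\geq \delta_{k,v}\,n^{2\ell-1}+O(n^{2\ell-2})$ with an explicit constant $\delta_{k,v}$. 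You instead invoke the Hajek/Hoeffding projection onto individual letters, which reduces positivity of the leading coefficient to non-degeneracy of the first-order component, and you verify non-degeneracy by showing $q_1$ is non-constant ($q_1$ vanishes at $c=1$ or at $c=k$ because $d>1$, while its average over $[k]$ is $\binom{k}{d}k^{-\ell}>0$); your kernel argument for $\Sigma$ via linear independence of the $u_a$ is sound, and the projection inequality $\text{VAR}(X_n)\geq\sum_m\text{VAR}(g_m(w_m))$ is valid here since the letters are i.i.d., even though the kernel is asymmetric. Each approach buys something: the paper's direct computation produces the explicit constant $\delta_{k,v}$, which feeds into the Berry--Esseen constant of Corollary~\ref{bess}, at the cost of delicate counting --- note that its final positivity check $\frac{2k}{2\ell-1}>1$ implicitly requires $k\geq\ell$, whereas your projection route needs only $d>1$ and so covers patterns with many repeated letters ($\ell>k$) without modification, though it delivers the lower-bound constant only as the integral $\int_0^1\mathbf u(\alpha)^{\top}\Sigma\,\mathbf u(\alpha)\,d\alpha$ rather than in closed form. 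The Riemann-sum step you assert is routine to justify, so there is no gap.
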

	\begin{proof}
	There are $\binom{n}{\ell}$ ways to choose $\ell$ indexes $j_1<\cdots j_\ell$ out of $n$ possibilities.
	We refer to these ordered $\ell$-tuples as $\ell$-subintervals of $[n].$ Enumerate these subintervals in an arbitrary manner,
	and let $I_j,$ $j=1,\ldots,\binom{n}{\ell},$ denote the $j$-th subinterval. Let $X_{n,j}$ be the indicator of the event
	that the pattern occurs at $j$-th subinterval. \par
	First, we will compute $E(X_n).$ Given that
	\beq
	E(X_{n,j})=\frac{1}{k^\ell}\binom{k}{d},\qquad 1\leq j\leq \binom{n}{\ell},
	\feq
	and $X_n=\sum_{j=1}^{\binom{n}{\ell}} X_{n,j},$ we have
	\beqn
	\label{ex}
	E(X_n)=\binom{n}{\ell}\binom{k}{d}\frac{1}{k^\ell}.
	\feqn
	
	Next, we will estimate $\text{VAR}(X_n).$  To that end, we rewrite $X^2_n$ as follows
	\beq
	X_n^2=\sum_{1\leq j,m \leq \binom{n}{\ell}} X_{n,j}X_{n,m}=\sum_{s=0}^\ell A_s, \feq where \beq 
	A_s:=\sum_{\{j,m:|I_j\cap I_m|=s\}}X_{n,j}X_{n,m}.
	\feq
	In what follows, we will adopt the proof strategy of \cite{bona} and estimate $E(A_s)$ separately for different values of the parameter $s.$ For $s=0$ the exact value is
	\beq
	E(A_0) = \binom{n}{\ell}\binom{n-\ell}{\ell}\frac{1}{k^{2\ell}}\binom{k}{d}^2
	\feq
	where we used the fact that for two intervals $I_j$ and $I_m$ with no overlap
	\beq
	E(X_{n,j}X_{n,m})=\frac{1}{k^{2\ell}}\binom{k}{d}^2.
	\feq
	If $A_0$ would be the only terms contributing to the variance of $X_n,$ its entire contribution combined with the term $-\bigl[E(X_n)\bigr]^2$
	would amount to (cf. formulas (9) and (10) in \cite{bona})
	\beqn
	\nonumber 
	E(A_0)-[E(X_n)]^2&=&\binom{n}{\ell}\binom{n-\ell}{\ell}\frac{1}{k^{2\ell}}\binom{k}{d}^2-\left(\binom{n}{\ell}\frac{1}{k^\ell}\binom{k}{d}\right)^2
	\\
	\nonumber
	&=&-n^{2\ell-1}\frac{\ell^2}{(\ell!)^2k^{2\ell}}\binom{k}{d}^2+O(n^{2\ell-2}) \\ 
	\label{amount}
	&=& -\Theta(n^{2\ell-1})
	\feqn
	To finish the estimate on the variance we need to provide estimates on $A_s$ when $s\neq 0.$ More specifically,  when $s=1$ we give an accurate estimate, and for $s\geq 2$ a crude estimate will suffice for our purpose. More specifically, we will show that $E(A_s)=\Theta(n^{2\ell-s}),$ and while $E(A_0)-[E(X_n)]^2$ is negative, $E(A_0)+E(A_1)-[E(X_n)]^2=\Theta(n^{2\ell-1})$ which gives the necessary estimate for the variance.
	\par
	Case I: $s=1$. Consider the sum of the terms $E(X_{n,i}X_{n,j})$ over the pairs of intervals that overlap exactly at one place. The summation of these terms is 
	\beqn
	\label{dvk}
	E(A_1)=\sum_{\{j,m:|I_j\cap I_m|=1\}}E(X_{n,j}X_{n,m})=\binom{n}{2\ell-1}D_{k,v}
	=\Theta(n^{2\ell-1})D_{k,v},
	\feqn
	where
	\beq
	D_{k,v}&\geq& \frac{1}{k^{2\ell-1}}\sum_{i=0}^{\ell-1} \binom{2i}{i}\binom{2\ell-2-2i}{\ell-1-i}\cdot \min_{1\leq p\leq d}\sum_{t=p}^{k-d+p} \left\{\binom{t-1}{p-1}\binom{k-t}{d-p}\right\}^2,
	\feq
	with two words occupying the intervals $I_j$ and $I_m$ overlap over the $(i+1)$-th letter of each, and $v_{i+1}$ being the $p$-th highest letter (among
	the distinct possibilities $1,\ldots,d$) in the pattern $v.$ To obtain the lower bound for $D_{k,v}$ we will only consider the case when the common letter is the $(i+1)$-th letter for some $i\in\{0,\ldots,\ell-1\}$ in both intervals. Once the joint location of $I_j$ and $I_m$ is  chosen, we have in total $k^{2\ell-1}$ possibilities to choose the corresponding letters. We have to fill $2i$ locations before and and $2\ell-2-2i$ locations after the common letter. The term $\binom{2i}{i}\binom{2\ell-2-2i}{\ell-1-i}$ is the number of possibilities to designate $\ell-1$ of the remaining $2\ell-2$ locations to be occupied by letters of the interval $I_m.$ Assuming that for given $p$ and $t$ the common letter for $I_m$ and $I_j$ is $t\in [p,p+1,\ldots, k-(d-p)],$ we observe that we have $\binom{t-1}{p-1}\binom{k-t}{d-p}$ possibilities to choose $d$ distinct letters from $[k].$ 
	\par
	We remark that
	\beq
	\frac{1}{k^{2\ell-1}}\sum_{i=0}^{\ell-1} \binom{2i}{i}\binom{2\ell-2-2i}{\ell-1-i}=
	\frac{1}{k^{2\ell-1}}\binom{2\ell-2}{\ell-1}\sum_{i=0}^{\ell-1} \frac{\binom{\ell-1}{i}\binom{\ell-1}{i}}{\binom{2\ell-2}{2i}}
	\geq
	\frac{2}{k^{2\ell-1}}\binom{2\ell-2}{\ell-1},
	\feq
	where the inequality is obtained by enumerating the terms with $i=0$ and $i=\ell-1$ only.
	\par
	Furthermore, \beq
	\sum_{j=p}^{k-d+p} \left\{\binom{j-1}{p-1}\binom{k-j}{d-p}\right\}^2&\geq& (k-d+1)\left\{\sum_{j=p}^{k-d+p} \binom{j-1}{p-1}\binom{k-j}{d-p}\right\}^2
	\\
	&=&
	(k-d+1)\binom{k}{d}^2\geq \binom{k}{d}^2,
	\feq
	where we used Cauchy-Schwartz inequality in the first inequality and a variation of the Chu-Vandermonde identity stated as 
	\beq
	\sum_{j=p}^{k-d+p} \binom{j-1}{p-1}\binom{k-j}{d-p}=\binom{k}{d}.
	\feq
	This identity can be justified as follows: in order to choose $d$ distinct letters from $[k]$ we can first choose the $p$-th largest element among those $d$ letters, call it $j,$
	from the interval $[p,k-d+p],$ then $p-1$ letters from the interval $[1,j-1]$ and $d-p$ letters from the interval $[j+1,k].$ Collecting all the estimates together, we obtain that 
	\beqn 
	\label{a11}
	E(A_1)\geq \binom{k}{d}\left\{\binom{n}{2\ell-1}\frac{2}{k^{2\ell-1}}\right\}
	\binom{2\ell-2}{\ell-1}. 
	\feqn
	Case II: $s>1$. Furthermore, extending \eqref{dvk} to
	\beqn
	\label{a14}
	E(A_s)=\sum_{\{j,m:|I_j\cap I_m|=i\}}E(X_{n,j}X_{n,m})=\binom{n}{2\ell-i}D_{k,v}^{(i)}=\Theta(n^{2\ell-i}),
	\feqn
	where $D_{k,v}^{(i)}>0$ are strictly positive constants whose value depends on $k$ and $v$ only (but not on $n$). 
	\par
	Having in hand the above estimates for $E(A_n)$ we can now evaluate the variance of $X_n.$ Taking into the account \eqref{amount}, \eqref{a11}, and \eqref{a14}, we obtain that
	\beqn
	\nonumber
	\text{VAR}(X_n)&\geq& \binom{k}{d}\left\{\binom{n}{2\ell-1}\frac{2}{k^{2\ell-1}}
	\binom{2\ell-2}{\ell-1}-n^{2\ell-1}\frac{\ell^2}{(\ell!)^2k^{2\ell}}+O(n^{2\ell-2})\right\}
	\\
	&=&
	\label{varx}
	\delta_{k,v}n^{2\ell-1}+O(n^{2\ell-2}),
	\feqn
	where
	\beqn
	\nonumber
	\delta_{k,v}&=&\binom{k}{d}\left\{\frac{2}{k^{2\ell-1}(2\ell-1)((\ell-1)!)^2}-\frac{\ell^2}{(\ell!)^2k^{2\ell}}\right\}=
	\binom{k}{d}\frac{\ell^2}{k^{2\ell}(\ell!)^2}
	\Bigl(\frac{2k}{2\ell-1}-1\Bigr)
	\\
	\label{dex}
	&\geq&
	\frac{\ell^2}{k^{2\ell}(\ell!)^2}\Bigl(\frac{2\ell}{2\ell-1}-1\Bigr)=\frac{\ell^2}{k^{2\ell}(\ell!)^2(2\ell-1)}>0.
	\feqn
	Finally, by virtue of \eqref{ex}, the following limit exists and is strictly positive:
	\beqn
	\label{jvk}
	J_{k,v}:=\lim_{n\to\infty}\frac{\mu_n}{\sigma_n\sqrt{n}}>0,
	\feqn
	and therefore, the remainder of the proof is a straightforward application of Theorem~2 in \cite{janson} to the random variables $X_{n,i},$ and can be
	carried as in \cite{bona} verbatim.
	\end{proof}
	\begin{remark}
	\label{rclt}
	A central limit theorem for multisets closely related to Theorem~\ref{wclt} can be found in \cite{feray},
	see also references therein for earlier versions. Let $a_{i,n}\geq 0$ represent the number of occurrences of the letter $i\in [k]$ in the random word $W_n,$
	and denote by $A_n$ the random vector $(a_{1,n},\ldots,a_{k,n}).$ The CLT for $W_n$ in \cite{feray} can be stated as a limit theorem for the random variable $\frac{X_n-\witi \mu_n}{\witi \sigma_n}$ under the conditional measure $P(\cdot\,|A_n).$
	The main difference with Theorem~\ref{wclt} is that the scaling factors $\witi \mu_n=\witi \mu_n(A_n)$ and $\witi\sigma_n=\witi\sigma_n(A_n)$
	are random in that they depend on the vector $A_n.$ The relation of Theorem~\ref{wclt} to the CLT in \cite{feray} thus resembles the one between the so called annealed (average) and quenched limit theorems in the theory of random motion in a random media, see, for instance, \cite{notes}. In particular, $\sigma_n^2=E\bigl(\witi \sigma_n^2\bigr)+ \widehat \sigma_n^2,$ where $\sigma_n^2$ is the ``annealed" variance that appears in the statement of Theorem~\ref{wclt} whereas the term $\widehat \sigma_n^2$ describes fluctuations of the ``random environment" $A_n.$
	\end{remark}
	Our next result is a Berry-Esseen type bound for the convergence rate of the above CLT. The bound is a direct implication of Theorem~2.2 in \cite{becite}, along with the estimates in \eqref{varx}, \eqref{dex}, and the following modification of \eqref{amount}:
	\beqn
	\label{amount1}
	\Delta_n=\binom{n}{\ell}-\binom{n-\ell}{\ell}-1
	=n^{2\ell-1}\frac{\ell^2}{\ell!}+O(n^{2\ell-2}).
	\feqn
	Here $\Delta_n$ is the number of random indicators $X_{n,i}$ that are independent of $X_{n,i^*},$ an indicator with a given index $1\leq i^* \leq \binom{n}{\ell}.$ Let $\Phi(x)=\sqrt{\frac{1}{2\pi}} \int_{-\infty}^xe^{-\frac{x^2}{2}}\,dx,$ $x\in\rr,$ denote the distribution function of the standard normal variable. We have:
	\begin{corollary}
	\label{bess}
	In the notation of Theorem~\ref{wclt},
	\beq
	\sup_{x\in\rr}\,\Bigl|P\Bigl(\frac{X_n-\mu_n}{\sigma_n}\leq x\Bigl)-\Phi(x)\Bigr|\leq
	\frac{k^{\ell+2} \ell!}{k!}\,\sqrt{\frac{\ell}{\pi n}} +O(n^{-3/2})+O(n^{-\ell/2}) .
	\feq
	\end{corollary}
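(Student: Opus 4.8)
The plan is to view $X_n$ as a sum of bounded indicators whose pairwise dependence is \emph{local}, record the three structural quantities on which a dependency-graph Berry--Esseen estimate depends, and then invoke Theorem~2.2 of \cite{becite}. First I would recall from the proof of Theorem~\ref{wclt} the decomposition $X_n=\sum_{j=1}^{N}X_{n,j}$, where $N=\binom{n}{\ell}$ and $X_{n,j}$ is the indicator that the pattern occupies the $j$-th $\ell$-subinterval $I_j$ of $[n]$. The key structural fact is that $X_{n,i}$ and $X_{n,j}$ are independent as soon as $I_i\cap I_j=\emptyset$, because disjoint index sets select disjoint blocks of the independent letters $w_1,\ldots,w_n$; hence the indicators form a dependency graph in which each vertex has degree at most $\Delta_n=\binom{n}{\ell}-\binom{n-\ell}{\ell}-1$, the number of subintervals meeting a fixed one.

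With this in hand I would assemble the three inputs required by the abstract bound. The summands are $\{0,1\}$-valued, so all the absolute moments entering the estimate are controlled by $1$; the maximal degree $\Delta_n=\Theta(n^{\ell-1})$ is supplied by \eqref{amount1}; and the lower bound on the variance $\sigma_n^2=\Theta(n^{2\ell-1})$ is supplied by \eqref{varx} and \eqref{dex}. A dependency-graph Berry--Esseen inequality of the type in \cite{becite} bounds the Kolmogorov distance by a multiple of $\Delta_n^2\bigl(\sum_{j}E|X_{n,j}-EX_{n,j}|^3\bigr)/\sigma_n^3$ plus lower-order corrections. Substituting the exponents,
\[
\frac{\Delta_n^2\sum_{j}E|X_{n,j}-EX_{n,j}|^3}{\sigma_n^3}
=\Theta\Bigl(\frac{n^{2\ell-2}\cdot n^{\ell}}{n^{3\ell-3/2}}\Bigr)=\Theta(n^{-1/2}),
\]
which already pins the leading order. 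The explicit coefficient $\frac{k^{\ell+2}\ell!}{k!}\sqrt{\frac{\ell}{\pi}}$ of the $n^{-1/2}$ term is then obtained by replacing each $\Theta$ with its exact leading asymptotics --- $\binom{n}{\ell}\sim n^\ell/\ell!$, the leading coefficient of $\Delta_n$ from \eqref{amount1}, the per-site probability $p=\binom{k}{d}/k^\ell$ that underlies \eqref{ex}, and the variance coefficient from \eqref{dex} --- and simplifying, using $\binom{k}{d}=k!/\bigl(d!(k-d)!\bigr)$.

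Finally I would account for the two remainder terms. The $O(n^{-3/2})$ term collects the next-order corrections to $N$, $\Delta_n$, and $\sigma_n$ that are generated when the exact binomial and variance asymptotics are expanded one step beyond their leading coefficients in the ratio above. The $O(n^{-\ell/2})$ term arises from the secondary contribution built into Theorem~2.2, which scales like $\sqrt{\Delta_n}/\sigma_n=\Theta(n^{-\ell/2})$ and measures the fluctuation of the local neighbourhood sums $\sum_{j:\,I_j\cap I_i\neq\emptyset}X_{n,j}$.

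The order-of-magnitude verification is immediate once the three exponents are in place, and checking the hypotheses of Theorem~2.2 is routine given the block-independence noted above. The main obstacle is purely one of bookkeeping: carrying the \emph{exact} leading coefficients of $\binom{n}{\ell}$, $\Delta_n$, and $\sigma_n^2$ through the substitution and collapsing them --- against $\binom{k}{d}$ and the explicit $\delta_{k,v}$ of \eqref{dex} --- into the stated constant $\frac{k^{\ell+2}\ell!}{k!}\sqrt{\frac{\ell}{\pi}}$, while simultaneously confirming that the precise form of the remainder in \cite{becite} produces exactly the two error terms $O(n^{-3/2})$ and $O(n^{-\ell/2})$ with no contribution of intermediate order.
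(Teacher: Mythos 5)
Your proposal is correct and takes essentially the same route as the paper, whose own proof is just the one-line observation that Corollary~\ref{bess} follows by applying the dependency-graph Berry--Esseen bound of Theorem~2.2 in \cite{becite} to the decomposition $X_n=\sum_j X_{n,j}$, with the degree bound $\Delta_n$ from \eqref{amount1} and the variance estimates \eqref{varx}--\eqref{dex}, exactly the three inputs you assemble. You even use the correct order $\Delta_n=\Theta(n^{\ell-1})$, whose exponent is misprinted as $2\ell-1$ in \eqref{amount1}.
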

	Remark that the classical Berry-Esseen bound for the rate of convergence of the CLT for partial sums of i.\,i.\,d.
	random variables is of order $n^{-1/2},$ thus the above bound is asymptotically optimal up to a constant.
	\par
	Theorem~\ref{wclt} implies a weak law of large numbers for $X_n$ and asserts that a typical deviation of $X_n$
	from $E(X_n)$ is of order $\frac{1}{\sqrt{n}}E(X_n).$ The main purpose of the following Chernoff type bounds is to estimate the probability
	of large deviations, namely the ones of the order of magnitude $E(X_n).$ The result is merely an instance of Corollary~2.6 in \cite{LDPJ} formulated using the notation of Theorem~\ref{wclt}.
	\begin{corollary}
	\label{wsldp}
	For any $t\geq 0,$
	\beq
	P(X_n\geq \mu_n+t)\leq \exp\Bigl\{-\frac{t^2(1-\Delta_n/4K_n)}{2\Delta_n(\mu_n+t/3)(1-\mu_n/K_n)}\Bigr\}
	\feq
	and
	\beq
	P(X_n\leq \mu_n-t) \leq \exp\Bigl\{-\frac{t^2(1-\Delta_n/4K_n)}{2\Delta_n\mu_n}\Bigr\},
	\feq
	where $\Delta_n$ is introduced in \eqref{amount1} and $K_n=\binom{n}{\ell}.$
	\end{corollary}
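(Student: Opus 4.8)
The plan is to obtain both tail bounds as a direct instance of Corollary~2.6 in \cite{LDPJ}, applied to the decomposition $X_n=\sum_{i=1}^{K_n}X_{n,i}$ introduced in the proof of Theorem~\ref{wclt}. Recall that $K_n=\binom{n}{\ell}$ enumerates the $\ell$-subintervals $I_1,\ldots,I_{K_n}$ of $[n],$ and that $X_{n,i}$ is the $\{0,1\}$-valued indicator of the event that $v$ occurs at $I_i,$ so that $X_n$ is a sum of Bernoulli variables with mean $\mu_n=E(X_n).$ First I would record the dependency structure of this family: each $X_{n,i}$ is a deterministic function of the letters $\{w_j:j\in I_i\}$ only, and since the letters $(w_j)_{j\in[n]}$ are independent, the indicators $X_{n,i}$ and $X_{n,m}$ are independent whenever $I_i\cap I_m=\emptyset.$ Hence $(X_{n,i})_{i=1}^{K_n}$ admits a dependency graph $G_n$ on the vertex set $\{1,\ldots,K_n\}$ whose edges join $i$ and $m$ precisely when $I_i\cap I_m\neq\emptyset.$

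Next I would compute the maximal degree of $G_n.$ Fixing any index $i^*,$ the number of $\ell$-subintervals disjoint from $I_{i^*}$ equals $\binom{n-\ell}{\ell},$ namely the number of ways to choose $\ell$ positions among the $n-\ell$ indices lying outside $I_{i^*}.$ Therefore the number of neighbors of $i^*$ in $G_n$ (excluding $i^*$ itself) is $\binom{n}{\ell}-\binom{n-\ell}{\ell}-1=\Delta_n,$ in agreement with \eqref{amount1}. This count is independent of the choice of $i^*,$ so $G_n$ is regular and $\Delta_n$ is exactly the maximal dependency degree that enters the cited inequality.

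With these ingredients the argument reduces to a verbatim application of Corollary~2.6 in \cite{LDPJ}: the summands $X_{n,i}$ are Bernoulli, their number is $K_n=\binom{n}{\ell},$ their sum has mean $\mu_n,$ and their dependency graph has maximal degree $\Delta_n.$ Substituting $K_n,$ $\mu_n,$ and $\Delta_n$ into the upper- and lower-tail estimates provided by that corollary yields the two displayed bounds, including the correction factors $1-\Delta_n/4K_n$ and $1-\mu_n/K_n$ (note $\mu_n/K_n=\binom{k}{d}/k^\ell\leq 1$ and $\Delta_n<K_n,$ so both factors are positive).

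I do not expect a genuine analytic obstacle here, since the entire content lies in correctly identifying the dependency graph and its degree $\Delta_n$; the inequalities themselves are quoted, not re-derived. The only point requiring care will be matching the normalization conventions of \cite{LDPJ}, in particular confirming that its parameters correspond to our $K_n,$ $\mu_n,$ and $\Delta_n,$ after which the conclusion follows by direct substitution.
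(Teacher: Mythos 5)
Your proposal is correct and coincides with the paper's own argument: the paper likewise obtains Corollary~\ref{wsldp} as a direct instance of Corollary~2.6 in \cite{LDPJ}, applied to the decomposition $X_n=\sum_{i=1}^{K_n}X_{n,i}$ from the proof of Theorem~\ref{wclt}, with $K_n=\binom{n}{\ell}$ and $\Delta_n$ the degree of the dependency graph of the interval indicators as in \eqref{amount1}. Your explicit identification of the dependency graph and the count $\binom{n}{\ell}-\binom{n-\ell}{\ell}-1$ is exactly what the paper leaves implicit (and your reading of $\Delta_n$ as the number of indicators \emph{not} independent of a fixed $X_{n,i^*}$ is the correct one needed for the cited inequality).
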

	We will now state a direct consequence of Theorem~\ref{wclt} in terms of the weak avoidance penalty function $c^v_{k,n}(x).$
	Our main motivation for including this result is the subsequent Theorem~\ref{inv1}. Recall the notation of Theorem~\ref{wclt}.
	\begin{lemma}
	\label{cvclt}
	\item [(a)] Let $(\theta_n)_{n\in\nn}$ be a sequence of positive reals such that $\lim_{n\to\infty} \theta_n=+\infty$ and
	$\lim_{n\to\infty} \frac{\mu_n}{\theta_n}=\gamma$ for some $\gamma\in [0,+\infty).$ Then, the following holds for any constant
	$t\in\rr:$
	\beqn
	\label{j}
	\lim_{n\to\infty} \frac{\theta_n}{\mu_n}\log E\bigl(e^{\frac{tX_n}{\theta_n}}\bigr)
	=\lim_{n\to\infty} \frac{\theta_n}{\mu_n}\log E\Bigl[\Bigl(1+\frac{t}{\theta_n}\Bigr)^{X_n}\Bigr]= t
	\feqn
	\item [(b)] The following holds for any constant $t\in\rr:$
	\beqn
	\label{j3}
	\lim_{n\to\infty} \frac{1}{\sqrt{n}}\log E\bigl(e^{\frac{tX_n\sqrt{n}}{n^\ell}}\bigr)=\lim_{n\to\infty} \frac{1}{\sqrt{n}}\log E\Bigl[\Bigl(1+\frac{t\sqrt{n}}{n^\ell}\Bigr)^{X_n}\Bigr]=J_{k,v}t,
	\feqn
	where $J_{k,v}$ are strictly positive constants introduced in \eqref{jvk}.
	\end{lemma}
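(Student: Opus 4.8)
The plan is to handle all four limits through a single device. For $n$ large enough that $1+a_n>0$ I write the power expressions as exponentials, $(1+a_n)^{X_n}=e^{s_nX_n}$ with $s_n:=\log(1+a_n)$, and note that in every case $a_n\to 0$ (namely $a_n=t/\theta_n$ in (a) and $a_n=t\sqrt n/n^\ell$ in (b)), so $s_n=a_n(1+o(1))$. It therefore suffices to analyse $\log E\bigl(e^{s_nX_n}\bigr)$ for a deterministic $s_n\to 0$ and, at the end, to check that replacing $s_n$ by $a_n$ perturbs the normalised quantity by $o(1)$. The whole argument rests on the exact splitting
\[
\log E\bigl(e^{s_nX_n}\bigr)=s_n\mu_n+\log E\bigl(e^{s_n(X_n-\mu_n)}\bigr),
\]
in which the first (mean) term produces the stated limit and the second (fluctuation) term is shown to be negligible after normalisation. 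I will use $\mu_n=\binom{n}{\ell}\binom{k}{d}k^{-\ell}$ from \eqref{ex}, the order $\sigma_n=\Theta(\mu_n/\sqrt n)$ from Theorem~\ref{wclt}, and the fact that $\binom{n}{\ell}/\mu_n=k^\ell/\binom{k}{d}$ is constant in $n$.

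For part (a) the decisive point is that $X_n/\theta_n$ is uniformly bounded: $0\le X_n\le\binom{n}{\ell}$ and $\binom{n}{\ell}/\theta_n=(k^\ell/\binom{k}{d})(\mu_n/\theta_n)\to (k^\ell/\binom{k}{d})\gamma<\infty$, so $W_n:=(X_n-\mu_n)/\theta_n$ satisfies $|W_n|\le C$ for a constant $C$ and all large $n$, with $E(W_n)=0$ and $\mathrm{Var}(W_n)=\sigma_n^2/\theta_n^2=(\sigma_n^2/\mu_n^2)(\mu_n/\theta_n)^2\to 0$. The elementary bound $|e^{tw}-1-tw|\le \tfrac12t^2e^{|t|C}w^2$ for $|w|\le C$ then gives $E(e^{tW_n})=1+O(\mathrm{Var}(W_n))$, whence $\log E(e^{tW_n})=O(\sigma_n^2/\theta_n^2)$ and
\[
\frac{\theta_n}{\mu_n}\log E\bigl(e^{tW_n}\bigr)=O\Bigl(\frac{\sigma_n^2}{\mu_n\theta_n}\Bigr)=O\Bigl(\frac1n\cdot\frac{\mu_n}{\theta_n}\Bigr)\longrightarrow 0.
\]
Since $\tfrac{\theta_n}{\mu_n}s_n\mu_n=\theta_ns_n\to t$ (using $\theta_n\to\infty$, for both $s_n=t/\theta_n$ and $s_n=\log(1+t/\theta_n)$), both limits in (a) equal $t$.

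For part (b) the mean term is immediate:
\[
\frac{1}{\sqrt n}\,s_n\mu_n=\frac{t\mu_n}{n^\ell}\bigl(1+o(1)\bigr)\longrightarrow t\lim_{n\to\infty}\frac{\mu_n}{n^\ell}=\frac{t\binom{k}{d}}{k^\ell\ell!}=J_{k,v}\,t,
\]
the value being read off from $\mu_n\sim\binom{k}{d}n^\ell/(k^\ell\ell!)$, the $-\tfrac12a_n^2$ correction in $s_n$ contributing only $O(n^{1/2-\ell})$. The fluctuation term is the heart of the matter and the main obstacle: here $a_n\sigma_n=\Theta(1)$ is merely bounded, so $a_n(X_n-\mu_n)=\Theta(\sqrt n)$ is not uniformly bounded and the Taylor bound of part (a) is unavailable. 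What I must show is $\log E\bigl(e^{a_n(X_n-\mu_n)}\bigr)=o(\sqrt n)$, in fact $O(1)$. The lower bound $\ge 0$ is Jensen's inequality; for the upper bound I would invoke the Chernoff estimates of Corollary~\ref{wsldp}. Taking $t>0$ without loss of generality (the case $t<0$ is symmetric via the lower-tail bound, and $t=0$ is trivial), in the representation
\[
E\bigl(e^{a_n(X_n-\mu_n)}\bigr)=1+a_n\int_0^\infty e^{a_nu}\,P(X_n-\mu_n>u)\,du+(\text{bounded lower part}),
\]
the tail exponent is comparable to $u^2/[\Delta_n(\mu_n+u/3)]$ (the correction factors in Corollary~\ref{wsldp} tending to positive constants), where $\Delta_n=\binom{n}{\ell}-\binom{n-\ell}{\ell}-1$ is a polynomial of degree $\ell-1$, so $\Delta_n\mu_n=\Theta(n^{2\ell-1})=\Theta(\sigma_n^2)$. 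Hence at the scale $u=\Theta(\sigma_n)=\Theta(n^{\ell-1/2})$ one has $a_nu=\Theta(1)$ and exponent $\Theta(1)$, while at the extreme scale $u=\Theta(n^\ell)$ the decay $\exp\{-\Theta(u/\Delta_n)\}=\exp\{-\Theta(n)\}$ overwhelms the growth $e^{a_nu}=\exp\{\Theta(\sqrt n)\}$. Thus the integral is $\Theta(1)$, $\log E\bigl(e^{a_n(X_n-\mu_n)}\bigr)=O(1)$, and division by $\sqrt n$ sends the fluctuation term to $0$. Combined with the mean term this yields $J_{k,v}t$, and the $(1+t\sqrt n/n^\ell)^{X_n}$ form follows identically through $s_n=\log(1+a_n)$.

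The single delicate point is thus the uniform control of the fluctuation moment generating function in (b): one must certify that it grows subexponentially in $\sqrt n$, which is exactly what the large-deviation bounds of Corollary~\ref{wsldp} (equivalently, uniform integrability of $e^{\lambda Y_n}$ on a fixed $\lambda$-interval, with $Y_n=(X_n-\mu_n)/\sigma_n$) provide. Every other ingredient — the boundedness of $X_n$, the mean \eqref{ex}, and the variance order from Theorem~\ref{wclt} — is elementary.
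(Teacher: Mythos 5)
Your proposal is correct in substance, and the two parts relate to the paper's proof differently. Part (a) is essentially the paper's own argument --- a second-order Taylor bound made usable by the uniform boundedness of $X_n/\theta_n$, combined with the variance order from Theorem~\ref{wclt} --- but your centered version $W_n=(X_n-\mu_n)/\theta_n$ is a genuine streamlining: because the error term is $O(\sigma_n^2/\theta_n^2)$ rather than $O(\mu_n^2/\theta_n^2)$, it covers $\gamma=0$ and $\gamma\in(0,\infty)$ in one stroke, whereas the paper must split into two cases and handle $\gamma>0$ separately via the law of large numbers. Part (b) is where you truly diverge. The paper controls the fluctuation term by asserting convergence of the moment generating functions of $(X_n-\mu_n)/\sigma_n$ to $e^{t^2/2}$; since convergence in distribution does not by itself imply MGF convergence, the paper must lean on the cited Kozakiewicz/Janson machinery at this point. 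You instead prove the weaker but sufficient bound $\log E\bigl(e^{a_n(X_n-\mu_n)}\bigr)=O(1)$ directly from the Chernoff estimates of Corollary~\ref{wsldp} by tail integration, using $\Delta_n=\Theta(n^{\ell-1})$ and $\Delta_n\mu_n=\Theta(n^{2\ell-1})=\Theta(\sigma_n^2)$ --- correctly so, despite the typo in \eqref{amount1}, where $n^{2\ell-1}$ should read $n^{\ell-1}$ (note $\Delta_n\leq\binom{n}{\ell}$). Your regime analysis checks out: the Gaussian regime contributes $e^{O(1)}\cdot a_n\cdot\Theta(\sigma_n)=O(1)$, the linear-tail regime contributes $e^{-\Theta(n)}$, the lower part is bounded by $1$, and Jensen gives the matching lower bound; dividing by $\sqrt n$ kills the fluctuation term. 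This route is more self-contained than the paper's and avoids its one delicate citation.

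One point deserves flagging, though you inherited it from the paper rather than created it: your mean term correctly yields the limit $t\lim_{n\to\infty}\mu_n/n^\ell=t\binom{k}{d}/(k^\ell\ell!)$ for the scaling $t\sqrt n/n^\ell$ appearing in \eqref{j3}, but you then write ``$=J_{k,v}t$'' without argument. By \eqref{jvk}, $J_{k,v}=\lim_{n\to\infty}\mu_n/(\sigma_n\sqrt n)$, and the two constants coincide only if $\sigma_n\sqrt n/n^\ell\to1$, which is not established anywhere (Theorem~\ref{wclt} gives only $\sigma_n=\Theta(\mu_n/\sqrt n)$) and is generally false: for $v=12$, $k=2$ one computes $\sigma_n^2\sim n^3/48$, so $J_{k,v}=\sqrt3/2$ while $\binom{k}{d}/(k^\ell\ell!)=1/8$. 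The same conflation occurs in the paper itself: its proof of (b) computes the limit at the scale $1/\sigma_n$, where the value $J_{k,v}t$ is correct, and then passes silently to the scale $\sqrt n/n^\ell$ of the statement, where the correct constant is the one you computed. So your value is the right one for \eqref{j3} as written; just do not label it $J_{k,v}$, or state explicitly that the identification presupposes $\sigma_n\sim n^{\ell-1/2}$.
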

	\begin{proof}
	Observe that all the expectations in the statement of the lemma are well-defined for all $t\in\rr$
	because $1\leq X_n\leq \binom{n}{l}.$ Let $s=e^t.$ We will use the parameter $s$ so defined in both parts, (a) and (b), of the proof.
	\item [(a)]  We will consider separately two cases, $\gamma=0$ and $\gamma\in (0,\infty).$
	\item [] Case I: $\gamma=0.$ Using the second-order Taylor series with the remainder in the Lagrange form
	\beqn
	\label{lf}
	e^y=1+y+\frac{e^{y^*}y^2}{2},\quad \mbox{\rm with}~y=\frac{X_n t}{\theta_n}>0,\, |y^*|\in [0,|y|],
	\feqn
	we obtain:
	\beq
	&&
	\frac{\theta_n}{\mu_n}\log \bigl\{E(e^{tX_n/\theta_n})\bigr\}=
	\frac{\theta_n}{\mu_n}\log \Bigl\{E\Bigl(1+\frac{t X_n}{\theta_n}+e^{t_n^*X_n/\theta_n}\frac{(X_n t)^2}{2(\theta_n)^2}\Bigr)\Bigr\}
	\feq
	for some random (because of the dependence on $X_n$) $t_n^*\in [0,|t|].$
	Note that in view of \eqref{ex} and the condition $\lim_{n\to\infty}\frac{\mu_n}{\theta_n}<\infty,$ with probability one,
	\beqn
	\label{mkv}
	\sup_{n\in \nn}e^{t_n^*X_n/\theta_n}\leq \sup_{n\in \nn}e^{|t|\binom{n}{\ell}/\theta_n}<M_{k,v}(t)
	\feqn
	for some (deterministic) constant $M_{k,v}(t)>0$ which depends on the parameters $k,v$ and $t.$
	Furthermore, by Theorem~\ref{wclt}, $E(X_n^2)= \sigma_n^2+\mu_n^2\sim \mu_n^2.$ Therefore,
	\beqn
	\label{uta}
	\lim_{n\to\infty} \frac{\theta_n}{\mu_n}\log E(e^{tX_n/\theta_n})=t.
	\feqn
	Recall the constant $M_{k,v}(t)$ in \eqref{mkv}. For any $r\in \nn,$ we have
	\beq
	\Bigl|e^{rt/\theta_n}-\Bigl(1+\frac{t}{\theta_n}\Bigr)^r\Bigr|&=&
	\Bigl|e^{rt/\theta_n}-\Bigl(1+\frac{t}{\theta_n}\Bigr)^r\Bigr|\leq rM_{k,v}(t)
	\Bigl|e^{\frac{t}{\theta_n}}-1-\frac{t}{\theta_n}\Bigr|
	\\
	&\leq& \frac{M_{k,v}(t)t^2}{2} \frac{r}{\theta_n^2},
	\feq
	where we used the mean-value theorem applied to the function $f(y)=y^r$ in the first step and \eqref{lf} in the second one. Since,
	\beq
	\frac{\theta_n}{\mu_n}\cdot \Bigl|\frac{1}{k^n}\sum_{r\geq 0}f_r^v(k,n)\frac{r}{\theta_n^2}\Bigr|
	=\frac{1}{\theta_n}
	\to 0~\mbox{\rm as $n$ tends to $0$},
	\feq
	we get \eqref{j} for $\gamma=0$ by utilizing \eqref{uta}.
	\item [] Case II: $\gamma\in(0,\infty).$  In this case, \eqref{uta} follows directly from the law of large numbers $X_n/\mu_n\Rightarrow 1$
	in probability, as $n\to\infty,$ which is implied by Theorem~\ref{wclt}. The rest of the proof of \eqref{j} is the same as in Case I.
	\\
	$\mbox{}$
	\item [(b)] By Theorem~\ref{wclt}, for any $t\in\rr$ we have:
	\beq
	\lim_{n\to\infty} e^{-t\mu_n/\sigma_n}E(e^{tX_n/\sigma_n})=e^{\frac{t^2}{2}}.
	\feq
	The convergence of the moment generating functions of $\frac{X_n-\mu_n}{\sigma_n}$ can be verified using, for instance,  a general Theorem~3 in \cite{ccurtiss}, it is also transparent from the proofs in \cite{janson}. It follows that
	\beq
	\lim_{n\to\infty} \Bigl(-\frac{\mu_n t}{\sigma_n}+\log \big\{E(e^{tX_n/\sigma_n})\bigr\}\Bigr)=\frac{t^2}{2},
	\feq
	and hence
	\beq
	\lim_{n\to\infty} \frac{1}{\sqrt{n}}\log \big\{E(e^{tX_n/\sigma_n})\bigr\}=J_{k,v}t.
	\feq
	The last formula is an analogue of \eqref{uta} in part (a) and plays a similar role, the remainder of the argument is similar to its counterpart in (a).
	\end{proof}
	Recall $\qq^{v,x}_{k,n}$ from \eqref{qword} and let $\ee^{v,x}_{k,n}$ denote the expectation with respect to $\qq^{v,x}_{k,n}.$
	Then for any $z>0$ and $x\in (0,1)$ we have
	\beqn
	\label{eqq}
	\ee^{v,x}_{k,n}(z^{X_n})=\frac{1}{c^v_{k,n}(x)}\sum_{w \in A}(1-x)^{occ_v(w)}z^{occ_v(w)}=\frac{E[(z(1-x))^{X_n}]}{E[(1-x)^{X_n}]}.
	\feqn
	Two interesting regimes in this model arise when it is assumed that $x=x_n$ depends on $n$ and either $x_n=o(1)$ or $1-x_n=o(1).$ 
	Both the regimes can be considered as a perturbation of a uniform distribution, over $S_n$ in the former case and over the pattern-avoiding 
	set $\{w\in [k]^n:occ_v(w)=0\}$ in the latter. In the context of permutations, similar regimes for the particular case when the pattern is the inversion $21,$ were recently studied in \cite{nora, mall,starr}. In view of \eqref{eqq}, Lemma~\ref{cvclt} implies the following:
	\begin{theorem}
	\label{inv1}
	\item [(a)] Let $(\theta_n)_{n\in\nn}$ and $(\rho_n)_{n\in\nn}$ be two sequences of positive reals such that
	$\lim_{n\to\infty} \theta_n=+\infty,$ $\lim_{n\to\infty} \frac{\mu_n}{\theta_n}=\lambda$ for some $\lambda\in [0,+\infty),$ and
	$\lim_{n\to\infty} \frac{\theta_n}{\rho_n}=\alpha$ for some $\alpha\in [0,+\infty).$ Then the following holds for any $t\in\rr:$
	\beq
	\lim_{n\to\infty} \frac{\theta_n}{\mu_n}\log\ee^{v,\frac{1}{\rho_n}}_{k,n}(e^{\frac{tX_n}{\theta_n}})= t.
	\feq
	In particular, by virtue of \eqref{ex},
	\beqn
	\label{j1}
	\lim_{n\to\infty} \ee^{v,\frac{1}{\rho_n}}_{k,n}(e^{\frac{tX_n}{n^\ell}})=\exp\Bigl[\frac{t}{k^\ell\ell !}\binom{k}{d}\Bigr]
	\feqn
	if $\lim_{n\to\infty} \frac{n^\ell}{\rho_n} \in [0,+\infty).$
	\item [(b)] The following holds for any $t\in\rr$ and a sequence of positive reals $(\rho_n)_{n\in\nn} $ such that
	$\lim_{n\to\infty} \frac{n^\ell}{\rho_n\sqrt{n}}=\beta$ for some $\beta\in [0,+\infty):$
	\beqn
	\label{j5}
	\lim_{n\to\infty} \frac{1}{\sqrt{n}}\log\ee^{v,\frac{1}{\rho_n}}_{k,n}(e^{\frac{tX_n\sqrt{n}}{n^\ell}})=J_{k,v}t,
	\feqn
	where $J_{k,v}$ are strictly positive constants introduced in \eqref{jvk}.
	\item [(c)] The following holds for any $t\in\rr$ and a sequence of positive reals $(\rho_n)_{n\in\nn} $ such that
	\beq
	\lim_{n\to\infty} \frac{n^\ell}{\rho_n}=\gamma
	\feq 
	for some $\gamma\in [0,+\infty):$
	\begin{itemize}
	\item [(i)]  We have:
	\beqn
	\label{e1} 
	\lim_{n\to\infty} \ee^{v,\frac{1}{\rho_n}}_{k,n}\Bigl(\frac{X_n}{n^\ell}\Bigr)=\frac{1}{k^\ell\ell !}\binom{k}{d}.
	\feqn
	\item [(ii)] Let $\qq_n(r)=\qq^{v,\frac{1}{\rho_n}}_{k,n}(X_n=r)$ and 
	\beq
	\hh_n=-\sum_{r\geq 0}\qq_n(r)\log \qq_n(r)
	\feq
	be the entropy of $X_n$ under the law $\qq^{v,\frac{1}{\rho_n}}_{k,n}.$ Then 
	\beq
	\lim_{n\to\infty} \frac{\hh_n}{n}=\log\frac{k}{d-1}+\frac{\gamma}{k^\ell\ell !}\binom{k}{d}.
	\feq
	\end{itemize}
	\end{theorem}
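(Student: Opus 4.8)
The plan is to reduce every assertion to the already-established Lemma~\ref{cvclt} through the tilting identity \eqref{eqq}. Writing $x_n=\frac{1}{\rho_n}$ and $y_n=1-x_n$, for any $z>0$ we have $\ee^{v,x_n}_{k,n}(z^{X_n})=\frac{E[(zy_n)^{X_n}]}{E[y_n^{X_n}]}$, where $E$ denotes the expectation under the uniform law on $[k]^n$. The key observation is that multiplying the tilting variable $z$ by the constant factor $y_n$ merely shifts the exponential parameter appearing in Lemma~\ref{cvclt}: if $z=e^{t/\theta_n}$, then $zy_n=e^{s_n/\theta_n}$ with $s_n=t+\theta_n\log(1-x_n)$, and since $\rho_n\to\infty$ and $\theta_n/\rho_n\to\alpha$ one has $\theta_n\log(1-x_n)=-\theta_n/\rho_n+O(\theta_n/\rho_n^2)\to-\alpha$, hence $s_n\to t-\alpha$; the denominator corresponds to the parameter $s_n'=\theta_n\log(1-x_n)\to-\alpha$.

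For part (a) I would apply Lemma~\ref{cvclt}(a) to the numerator and denominator separately. Since that lemma is stated for a fixed constant but we need it along the convergent sequences $s_n\to t-\alpha$ and $s_n'\to-\alpha$, I would first upgrade it to convergent parameters by a monotonicity sandwich: because $X_n\ge 0$, the map $s\mapsto E(e^{sX_n/\theta_n})$ is nondecreasing, so for large $n$ and any $\varepsilon>0$ the quantity $\frac{\theta_n}{\mu_n}\log E(e^{s_nX_n/\theta_n})$ is trapped between its values at $s-\varepsilon$ and $s+\varepsilon$; letting $n\to\infty$ and then $\varepsilon\to0$ identifies the limit with the value at the constant $s$. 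Subtracting the denominator limit from the numerator limit gives $(t-\alpha)-(-\alpha)=t$, which is the claim. The special case \eqref{j1} is then immediate with $\theta_n=n^\ell$: by \eqref{ex}, $\mu_n/n^\ell\to\frac{1}{k^\ell\ell!}\binom kd=:\lambda$, so $\log\ee^{v,x_n}_{k,n}(e^{tX_n/n^\ell})=\frac{\mu_n}{\theta_n}\cdot\frac{\theta_n}{\mu_n}\log(\cdots)\to\lambda t$, and exponentiating yields the stated limit. Part (b) is identical in structure, now with $z=e^{t\sqrt n/n^\ell}$, the shift $s_n\to t-\beta$ (respectively $-\beta$) computed from $\frac{n^\ell}{\sqrt n}\log(1-x_n)\to-\beta$, and Lemma~\ref{cvclt}(b) (again extended to convergent parameters by the same sandwich) supplying the factor $J_{k,v}$; the difference $J_{k,v}(t-\beta)-J_{k,v}(-\beta)=J_{k,v}t$ is the assertion.

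For part (c)(i) I would read off the first moment from the Laplace-transform limit already proved in \eqref{j1}. Under $\qq^{v,x_n}_{k,n}$ the variable $X_n/n^\ell$ is bounded deterministically, $0\le X_n/n^\ell\le\binom n\ell/n^\ell\le 1/\ell!$, and \eqref{j1} shows its moment generating function converges to $e^{\lambda t}$ for every $t\in\rr$, i.e.\ to the transform of the constant $\lambda$. By a continuity theorem for moment generating functions, convergence of transforms of uniformly bounded random variables forces convergence in distribution to $\lambda$ together with convergence of the mean, whence $\ee^{v,x_n}_{k,n}(X_n/n^\ell)\to\lambda=\frac{1}{k^\ell\ell!}\binom kd$, which is \eqref{e1}.

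Finally, for the entropy in (c)(ii) I would follow the template of the proof of Theorem~\ref{entropy}, now keeping the tilt. From $\qq_n(r)=y_n^r f_r^v(k,n)/c^v_{k,n}(x_n)$ one gets $-\log\qq_n(r)=-r\log y_n-\log f_r^v(k,n)+\log c^v_{k,n}(x_n)$, and averaging under $\qq_n$ yields the decomposition $\hh_n=-\log(1-x_n)\,\ee^{v,x_n}_{k,n}(X_n)-\sum_{r\ge0}\qq_n(r)\log f_r^v(k,n)+\log c^v_{k,n}(x_n)$. I would then divide by $n$ and treat the three pieces: the first is controlled by part (c)(i) together with $-\log(1-x_n)\sim x_n=1/\rho_n$ and the hypothesis $n^\ell/\rho_n\to\gamma$, and is the source of the additive term $\frac{\gamma}{k^\ell\ell!}\binom kd$; the last is handled by Proposition~\ref{th1words} (with the parameter $x_n\to0$) and contributes $\log k$; and the middle piece is evaluated, exactly as in Theorem~\ref{entropy}, from $\frac1n\log f_r^v(k,n)\to\log(d-1)$ (Theorem~\ref{thm:fnrlim_word}) via a discrete dominated-convergence argument, contributing $-\log(d-1)$. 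Summing gives $\log\frac{k}{d-1}+\frac{\gamma}{k^\ell\ell!}\binom kd$. The main obstacle is precisely this last interchange of the limit with the $\qq_n$-weighted sum: since the law $\qq_n$ moves with $n$ and concentrates away from fixed $r$, the dominated-convergence step must be justified carefully via uniform control of $\frac1n\log f_r^v(k,n)$ over the relevant range of $r$, and the joint $x_n\to0$, $n\to\infty$ asymptotics of $\log c^v_{k,n}(x_n)$ and of the mean term must be tracked to the appropriate order to produce the stated $\gamma$-dependent contribution.
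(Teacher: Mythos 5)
Parts (a), (b), and (c)(i) of your proposal are correct and follow the same route as the paper: the tilting identity \eqref{eqq} combined with Lemma~\ref{cvclt}. You are in fact more careful than the paper at one point. The paper's proof says only ``plug $x=\frac{1}{\rho_n}$ and $z=e^{t/\theta_n}$ into \eqref{eqq} and use \eqref{j}'', but the numerator and denominator then involve the moving parameters $s_n=t+\theta_n\log(1-\rho_n^{-1})\to t-\alpha$ and $s_n'=\theta_n\log(1-\rho_n^{-1})\to-\alpha$, whereas Lemma~\ref{cvclt} is stated for a fixed constant $t$; your monotonicity sandwich (using $X_n\geq 0$) legitimately upgrades the lemma to convergent parameter sequences and fills this small gap, and the same remark applies to your use of \eqref{j3} in part (b). Your treatment of (c)(i), via the deterministic bound $X_n/n^\ell\leq 1/\ell!$ and convergence of moment generating functions, is the paper's bounded-convergence argument in slightly different clothing.

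Part (c)(ii) is another matter. You correctly isolate the dangerous step --- interchanging the pointwise limit $\frac{1}{n}\log f_r^v(k,n)\to\log(d-1)$ with the $\qq_n$-weighted sum --- but you leave it as an obstacle ``to be justified carefully,'' and in fact it cannot be justified: the limit of Theorem~\ref{thm:fnrlim_word} holds for each \emph{fixed} $r$, while by your own part (c)(i) the measure $\qq_n$ concentrates at $r=\Theta(n^\ell)$, where $\frac{1}{n}\log f_r^v(k,n)\to\log k$. Indeed, since $\sum_r f_r^v(k,n)=k^n$ with at most $\binom{n}{\ell}+1$ nonzero terms, one has $\max_r f_r^v(k,n)\geq k^n/\bigl(\binom{n}{\ell}+1\bigr)$, and a short truncation argument shows $\sum_r\qq_n(r)\frac{1}{n}\log f_r^v(k,n)\to\log k$, not $\log(d-1)$. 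Your middle term is also off by a factor of $n$: since $-\log(1-\rho_n^{-1})\sim\rho_n^{-1}$ and $\ee^{v,\frac{1}{\rho_n}}_{k,n}(X_n)=\Theta(n^\ell)$, the normalized term is $\Theta\bigl(\frac{n^\ell}{n\rho_n}\bigr)\to 0$ under $n^\ell/\rho_n\to\gamma$, so it cannot produce the additive $\gamma$-term you attribute to it. In fact no argument can: $X_n$ takes at most $\binom{n}{\ell}+1$ values, so $\hh_n\leq\log\bigl(\binom{n}{\ell}+1\bigr)=O(\log n)$ and hence $\hh_n/n\to 0$, contradicting the strictly positive limit asserted in (c)(ii). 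The statement is false as printed; the paper's own proof commits exactly the interchange you flagged (and moreover asserts $\frac{1}{n}\ee^{v,\frac{1}{\rho_n}}_{k,n}(X_n)\log(1-\rho_n^{-1})\sim\frac{\mu_n}{\rho_n}$, dropping the factor $\frac{1}{n}$), and the same defect affects Theorem~\ref{entropy}. So your instinct about where the difficulty lies was exactly right, but it is a genuine obstruction rather than a technicality, and your proposal for (c)(ii) is incomplete --- necessarily so, since the claimed limit does not hold.
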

	\begin{proof}
	For part (a), plug $x=\frac{1}{\rho_n}$ and $z=e^{t/\theta_n}$ into \eqref{eqq} and use \eqref{j}. For part (b), substitute $z=e^{\frac{t\sqrt{n}}{n^\ell}}$  and use \eqref{j3}. Part (i) in (c) follows then from the bounded convergence theorem and \eqref{j1} which implies that the distribution of $\frac{X_n}{n^\ell}$ under the law $\ee^{v,\frac{1}{\rho_n}}_{k,n}$  converges to the degenerate distribution at $\frac{1}{k^\ell\ell !}\binom{k}{d}.$ 
	Finally, 
	\beq
	\hh_n&=&-\sum_{r\geq 0} \pp_n(r)\log \frac{f_r^v(k,n)(1-x)^r}{c_{k,n}^v(x)}
	\\
	&=&
	-\sum_{r\geq 0} \pp_n(r)\log f_r^v(k,n) -\ee^{v,\frac{1}{\rho_n}}_{k,n}(X_n)\log(1-\rho_n^{-1})+\log c_{k,n}^v(\rho_n^{-1}),
	\feq
	which implies the claim in (ii) of part (c). Indeed, $\frac{1}{n}\sum_{r\geq 0} \pp_n(r)\log f_r^v(k,n)$ converges to $\log(d-1)$ by  Theorem~\ref{thm:fnrlim_word} and a discrete version of the bounded convergence theorem, 
	$\frac{1}{n}\ee^{v,\frac{1}{\rho_n}}_{k,n}(X_n)\log(1-\rho_n^{-1}) \sim \frac{\mu_n}{\rho_n}$ by \eqref{j1}, and 
	$\frac{1}{n}\log c_{k,n}^v(\rho_n^{-1})$ converges to $\log k$ by virtue of \eqref{j}. The proof of the theorem is complete. 
	\end{proof}
	The results in Theorem~\ref{inv1} shed some light on the asymptotic behavior of $X_n$ under $\qq^{v,x_n}_{k,n}$ for $x_n=o(1).$
	More specifically, the corollary suggests that the intensity sequence $x_n=1/\rho_n$ with $\rho_n$ which is at least $\Theta(\mu_n)$
	yields a perturbative ``light avoidance regime" in that the results in Lemma~\ref{cvclt} and Theorem~\ref{inv1} formally correspond to their counterparts in the corollary with $\rho_n=+\infty.$ In particular, \eqref{j1} shows that $\mu_n$ remains the proper scaling for $X_n$ for any $x_n$ in this regime, namely the distribution of $X_n/\mu_n$ under $\qq^{v,x_n}_{k,n}$ converges to that of the constant one as $n\to\infty.$ Furthermore, by the G\"{a}rtner-Ellis theorem \cite{ldpbook},
	the result in \eqref{j5} for moment generating functions implies Corollary~\ref{wldp} given below.     
	\begin{corollary}
	\label{wldp}
	Let $\rho_n$ be as defined in the statement of part (b) of Theorem~\ref{inv1}. Then the following holds for any Borel set $B\subset \rr:$
	\beq
	\lim_{n\to\infty} \frac{1}{\sqrt{n}}\log\qq^{v,\frac{1}{\rho_n}}_{k,n}\Bigl(\frac{X_n}{n^\ell}\in B\Bigr)=-\infty.
	\feq
	\end{corollary}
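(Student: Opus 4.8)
The plan is to read the assertion off the large deviation principle produced by the G\"artner--Ellis theorem \cite{ldpbook}, applied to the rescaled variables $Y_n:=X_n/n^\ell$ under the measures $\qq^{v,\frac{1}{\rho_n}}_{k,n}$ with speed $\sqrt{n}.$ The single input required is the limiting scaled cumulant generating function of $Y_n,$ and this is precisely part~(b) of Theorem~\ref{inv1}: writing the exponent in \eqref{j5} as $\frac{tX_n\sqrt{n}}{n^\ell}=t\sqrt{n}\,Y_n,$ that statement becomes
\beqn
\label{wldp-cgf}
\Lambda(t)&:=&\lim_{n\to\infty}\frac{1}{\sqrt{n}}\log\ee^{v,\frac{1}{\rho_n}}_{k,n}\bigl(e^{t\sqrt{n}\,Y_n}\bigr)=J_{k,v}\,t,\qquad t\in\rr.
\feqn
Hence $\Lambda$ exists as a finite limit for every $t\in\rr$ and is linear in $t.$

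First I would compute the Fenchel--Legendre transform of the linear map in \eqref{wldp-cgf}. Since $\Lambda(t)=J_{k,v}t,$ its convex conjugate is the degenerate good rate function
\beqn
\label{wldp-rate}
\Lambda^*(y)&=&\sup_{t\in\rr}\bigl(ty-J_{k,v}t\bigr)=\sup_{t\in\rr}t\,(y-J_{k,v})=\left\{\begin{array}{ll}0,&y=J_{k,v},\\ +\infty,&y\neq J_{k,v}.\end{array}\right.
\feqn
Next I would check the hypotheses of the G\"artner--Ellis theorem. As $\Lambda$ is finite on all of $\rr,$ the origin lies in the interior of its effective domain, which both yields exponential tightness of $(Y_n)$ at speed $\sqrt{n}$ and makes $\Lambda$ \emph{essentially smooth} (it is differentiable everywhere and, having full domain, vacuously steep) as well as lower semicontinuous. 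The theorem therefore delivers the full large deviation principle for $(Y_n)$ at speed $\sqrt{n}$ with rate function $\Lambda^*;$ in particular its upper bound is valid for every closed set.

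With these ingredients the conclusion follows at once. For an arbitrary Borel set $B\subset\rr,$ monotonicity of $\qq^{v,\frac{1}{\rho_n}}_{k,n}$ together with the G\"artner--Ellis upper bound applied to the closed set $\overline{B}$ gives
\beqn
\label{wldp-ub}
\limsup_{n\to\infty}\frac{1}{\sqrt{n}}\log\qq^{v,\frac{1}{\rho_n}}_{k,n}\bigl(Y_n\in B\bigr)
&\leq&\limsup_{n\to\infty}\frac{1}{\sqrt{n}}\log\qq^{v,\frac{1}{\rho_n}}_{k,n}\bigl(Y_n\in\overline{B}\bigr)\\
&\leq&-\inf_{y\in\overline{B}}\Lambda^*(y).
\feqn
By \eqref{wldp-rate} the function $\Lambda^*$ equals $+\infty$ at every point other than $J_{k,v},$ so $\inf_{y\in\overline{B}}\Lambda^*(y)=+\infty$ and the right-hand side of \eqref{wldp-ub} is $-\infty.$ Since each term on the left is at most $0,$ a $\limsup$ equal to $-\infty$ forces the limit itself to be $-\infty,$ which is exactly the assertion of the corollary.

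The main obstacle is conceptual rather than computational. The rate function \eqref{wldp-rate} is degenerate, assuming only the values $0$ and $+\infty;$ this reflects the fact that the genuine, Gaussian-scale fluctuations of $Y_n$ established in Theorem~\ref{wclt} live at the faster speed $n,$ so that at the slower speed $\sqrt{n}$ any deviation is super-exponentially improbable. Because $\Lambda^*$ vanishes only at the single value $J_{k,v},$ the G\"artner--Ellis lower bound carries no information, and the entire argument must rest on the upper bound in \eqref{wldp-ub}. The one point that must therefore be verified with care is the validity of that upper bound for arbitrary closed sets, which, as indicated above, is secured by the finiteness of $\Lambda$ in a neighborhood of the origin (equivalently, the exponential tightness of $(Y_n)$ at speed $\sqrt{n}$).
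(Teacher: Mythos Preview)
Your approach matches the paper's: the sentence immediately preceding the corollary says that it follows from \eqref{j5} via the G\"artner--Ellis theorem, and your write-up simply spells this out, correctly verifying that $\Lambda(t)=J_{k,v}t$ is finite, differentiable, and essentially smooth, so that the full LDP at speed $\sqrt{n}$ holds with the rate function $\Lambda^*$ you compute.

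There is, however, a genuine gap in your final step. You assert that $\inf_{y\in\overline{B}}\Lambda^*(y)=+\infty$ for \emph{every} Borel set $B,$ but your own formula gives $\Lambda^*(J_{k,v})=0.$ Whenever $J_{k,v}\in\overline{B}$ the infimum is therefore $0,$ and the upper bound yields only $\limsup\le 0,$ not $-\infty.$ This is not a repairable technicality: taking $B=\rr$ gives $\qq^{v,1/\rho_n}_{k,n}\bigl(X_n/n^\ell\in B\bigr)=1$ for every $n,$ so the limit in the statement equals $0,$ not $-\infty.$ The corollary as literally stated (for \emph{every} Borel $B$) is thus too strong; the G\"artner--Ellis argument you outline delivers the conclusion precisely for those $B$ whose closure avoids the zero of $\Lambda^*,$ and the result should be read with that implicit restriction.
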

	It is reasonable to expect that a large deviation principle for $X_n/n^\ell$ under $\qq^{v,\frac{1}{\rho_n}}_{k,n}$ holds
	with a finite rate function and with respect to the usual scaling sequence $n$ rather than $\sqrt{n}$ (in our context, cf. Corollary~\ref{wsldp} where
	$\frac{\mu_n^2}{\Delta_n \mu_n}=\frac{\mu_n}{\Delta_n}=\Theta(n)$). However, proving such a result would be beyond the reach of methods
	we employed in this section.
	\par
	We conclude the section with another corollary to Theorem~\ref{wclt}, a limit theorem that concerns with a Poisson approximation of  $X_n$ in the case when $k=k_n$ is a rapidly enough increasing function of $n.$ The result is an analogue for random words of \cite[Theorem~3.1]{crane} for random permutations.
	The proof of the theorem relies on a Poisson approximation of the sum of random indicators $X_n=\sum_i X_{n,i}$ via a modification of the Chen-Stein method which is due to \cite{AGG}, and follows the bulk of the argument in \cite{crane}. Recall that the total variation distance $d_{TV}(X,Y)$ 
	between two $\nn_0$-valued random variables $X$ and $Y$ is defined as 
	\beq
	d_{TV}(X,Y)=\sup_{A\subset \nn_0}|P(X\in A)-P(Y\in A)|=\frac{1}{2}\sum_{r=0}^\infty |P(X=r)-P(Y=r)|.
	\feq
	The following summary of results in \cite{AGG} suffices for our purpose (cf. Theorem~4.2 in \cite{crane}):
	\begin{theorem}
	[\cite{AGG}]
	\label{aggth}
	Let $N\in\nn$ and $(Y_i)_{i\in [N]}$ be a collection of identically distributed (but possibly dependent) Bernoulli variables
	with $P(Y_i=1)=p\in (0,1)$ and $(Y_i=0)=1-p.$  For $i,j\in [N]$ let $p_{i,j}=E(Y_iY_j).$ Set $Y=\sum_{i=1}^N Y_i$ and $\lambda=Np.$
	For any $i\in [N]$ let $D_i\subset [N]$ be a set of indices such that
	\beq
	Y_i~\mbox{\rm is independent of}~\sigma_i,
	\feq
	where $\sigma_i$ is the $\sigma$-algebra generated by $\{Y_j:j\in D_i\},$ and define
	\beqn
	\label{b12}
	b_1=\sum_{i=1}^N p^2|D_i| \qquad \mbox{\rm and} \qquad b_2=\sum_{i=1}^N\,\sum_{j\in D_i\backslash \{i\}} p_{ij}.
	\feqn
	Let $W$ be a Poisson random variable with parameter $\lambda,$ that is $P(W=r)=\frac{\lambda^re^{-\lambda}}{r!},$ $r\in\nn_0.$  
	Then,
	\beq
	d_{TV}(Y,W)\leq 2 (b_1+b_2).
	\feq
	\end{theorem}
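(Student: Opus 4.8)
The plan is to prove this by the Chen--Stein method, whose two ingredients are a functional characterization of the Poisson law and a decomposition of the Stein operator tailored to the local dependence structure encoded by the neighborhoods $D_i$. First I would recall Stein's characterization: a random variable $Z$ is distributed as $\pi_\lambda:=\mathrm{Pois}(\lambda)$ if and only if $E[\lambda g(Z+1)-Zg(Z)]=0$ for every bounded $g:\nn_0\to\rr$. For a fixed target set $A\subset\nn_0$ I would introduce the associated Stein equation
\[
\lambda g(j+1)-jg(j)=\one{j\in A}-\pi_\lambda(A),\qquad j\in\nn_0,
\]
and let $g_A$ denote its unique bounded solution. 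The only genuinely analytic input is the uniform ``magic factor'' estimate $\sup_A\sup_{j}|g_A(j+1)-g_A(j)|\le\min(1,\lambda^{-1})\le 1$, which I would quote from the Stein--Chen theory underlying \cite{AGG}.

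Evaluating the Stein expression at $j=Y$ gives $E[\lambda g_A(Y+1)-Yg_A(Y)]=P(Y\in A)-\pi_\lambda(A)$, so it remains to bound the left-hand side. Writing $\lambda=\sum_{i=1}^N p$ and, for each $i$, decomposing $Y=Z_i+W_i$ with $Z_i=\sum_{j\in D_i}Y_j$ and $W_i=\sum_{j\notin D_i}Y_j$ (here $D_i$ is taken to contain $i$), I would add and subtract $p\,E[g_A(W_i+1)]$ and $E[Y_ig_A(W_i+1)]$ inside each summand. This splits the $i$-th term into a telescoping piece $p\,E[g_A(Y+1)-g_A(W_i+1)]$, a piece $-E[Y_i(g_A(Y)-g_A(W_i+1))]$, and a remainder $p\,E[g_A(W_i+1)]-E[Y_ig_A(W_i+1)]$. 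The remainder is the ``$b_3$'' term of the general Chen--Stein bound, and here it vanishes identically: since $W_i$ is a function of $\{Y_j:j\notin D_i\}$ and $Y_i$ is independent of those coordinates, $E[Y_ig_A(W_i+1)]=p\,E[g_A(W_i+1)]$.

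For the two surviving pieces I would telescope using the magic factor. Because $Y+1-(W_i+1)=Z_i\ge 0$, the first piece is bounded by $\|\Delta g_A\|_\infty\,p\,E[Z_i]=\|\Delta g_A\|_\infty\,p^2|D_i|$; and on the event $\{Y_i=1\}$ one has $Y-(W_i+1)=\sum_{j\in D_i\setminus\{i\}}Y_j$, so the second piece is bounded by $\|\Delta g_A\|_\infty\sum_{j\in D_i\setminus\{i\}}E[Y_iY_j]=\|\Delta g_A\|_\infty\sum_{j\in D_i\setminus\{i\}}p_{ij}$. Summing over $i$ and invoking $\|\Delta g_A\|_\infty\le 1$ reproduces exactly the quantities $b_1$ and $b_2$ of \eqref{b12}, whence $|P(Y\in A)-\pi_\lambda(A)|\le b_1+b_2$ for every $A$. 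Taking the supremum over $A\subset\nn_0$ yields $d_{TV}(Y,W)\le b_1+b_2\le 2(b_1+b_2)$, as claimed.

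The main obstacle is the magic-factor bound $\sup_A\|\Delta g_A\|_\infty\le\min(1,\lambda^{-1})$: proving it requires solving the Stein recursion explicitly and estimating the resulting alternating sums uniformly in $A$, and it is the only step that is not mere bookkeeping. Since the statement is quoted verbatim from \cite{AGG}, I would cite their Theorem~1 for this estimate rather than reprove it; the remainder of the work is the neighborhood decomposition above, the only delicate point being that the \emph{exact} independence of $Y_i$ from the coordinates outside $D_i$ makes the $b_3$-type remainder vanish, so that no third term survives in the final bound.
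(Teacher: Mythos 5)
Your proof is correct, but note that the paper itself does not prove this statement at all: Theorem~\ref{aggth} is imported from \cite{AGG} (``the following summary of results in \cite{AGG} suffices for our purpose''), so there is no internal proof to compare against. What you have written is essentially the proof of Theorem~1 of \cite{AGG} specialized to the case where the third error term $b_3$ vanishes identically: Stein's characterization of the Poisson law, the Stein equation with the magic-factor bound $\sup_A\|\Delta g_A\|_\infty\le\min(1,\lambda^{-1})$, the decomposition $Y=Z_i+W_i$ along the neighborhoods $D_i$, exact cancellation of the $b_3$-type remainder by independence, and the two telescoping estimates. The bookkeeping checks out: $p\,\bigl|E[g_A(Y+1)-g_A(W_i+1)]\bigr|\le\|\Delta g_A\|_\infty\,p\,E(Z_i)=\|\Delta g_A\|_\infty\,p^2|D_i|$, and on $\{Y_i=1\}$ one has $Y-(W_i+1)=\sum_{j\in D_i\setminus\{i\}}Y_j$, producing $\sum_{j\in D_i\setminus\{i\}}p_{ij}$; summing over $i$ and taking $\sup_A$ gives $d_{TV}(Y,W)\le b_1+b_2$, which is in fact sharper than the stated $2(b_1+b_2)$ (the factor $2$ in \cite{AGG} comes from the convention $\|\mu-\nu\|=2\sup_A|\mu(A)-\nu(A)|$, while this paper's $d_{TV}$ is the $\sup_A$ version, so the theorem's constant is simply slack; with the full magic factor you would even get the extra factor $\frac{1-e^{-\lambda}}{\lambda}$). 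Quoting the magic-factor lemma rather than reproving it is the right call, since it is the only genuinely analytic ingredient and the statement is anyway attributed to \cite{AGG}.

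One point you should make explicit rather than silent: your argument uses the hypothesis that $Y_i$ is independent of $\sigma(\{Y_j:j\notin D_i\})$, i.e., that $D_i$ is a \emph{dependency neighborhood} containing $i$, whereas the statement as printed says $Y_i$ is independent of $\sigma(\{Y_j:j\in D_i\})$. As printed, the theorem is false: take $D_i=\emptyset$ for all $i$, so that $b_1=b_2=0$ in \eqref{b12}, while $Y_1=Y_2=\cdots=Y_N$ gives $Y=NY_1$, which is not close to Poisson. So the printed condition is a typo for $j\notin D_i$, and your reading is the correct one --- indeed it is the one the paper itself uses, since in the proof of Theorem~\ref{wpois} the identity $b_1=K_n\Delta_np_n^2$ with $\Delta_n$ from \eqref{amount1} requires $D_j=\{m:I_j\cap I_m\neq\emptyset\}$ (the displayed $D_j=\{m:I_j\cap I_m=\emptyset\}$ there is the same slip). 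Your proof is complete modulo the quoted Stein bound, but state the corrected hypothesis; as it stands you prove a (true) statement that differs from the (false) literal one.
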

	We will apply Theorem~\ref{aggth} with $Y_i=X_{n,i},$ where $X_{n,i}$ are indicators introduced in the course of the proof of Theorem~\ref{wclt} 
	assuming that $k=k_n$ and $\ell=\ell_n.$ Note that under the conditions we impose, 
	\beq
	\mu_n=E(X_n)=\binom{n}{\ell_n}\binom{k_n}{d_n}\frac{1}{k_n^\ell}
	\feq
	goes to zero as $n$ tends to infinity. We have:
	\begin{theorem}
	\label{wpois}
	Suppose that three sequences of natural numbers $(k_n)_{n\in\nn},$ $(\ell_n)_{n\in\nn},$ and $(d_n)_{n\in\nn}$ satisfy the following conditions: 
	\begin{itemize}
	\item [(i)] $d_n\leq \ell_n$ and $d_n\leq k_n$ for all $n\in\nn.$ 
	\item [(ii)] $\delta:=\liminf_{n\to\infty}\frac{d_n}{\ell_n}>0.$ 
	\item [(iii)] There exist constants $A>0$ and $\beta>\frac{2}{2+\delta}$ such that $\ell_n \geq An^\beta$ for all $n\in \nn.$  
	\end{itemize}
	Consider an arbitrary sequence of patterns $v_n\in [k_n]^{\ell_n},$ $n\in\nn,$ with $d_n$ distinct letters used to form $v_n.$ 
	Let $X_n=occ_{v_n}(W_n),$ where $W_n$ is drawn at random from $[k_n]^n.$ Then 
	\beq
	\lim_{n\to\infty} d_{TV}(X_n,Q_n)=0,
	\feq
	where $Q_n$ is a Poisson random variable with parameter $\mu_n.$ In particular, 
	\beq
	\lim_{n\to\infty}\Bigl|\frac{f_r^{v_n}(k_n,n)}{k_n^n}-\frac{\mu_n^r e^{-\mu_n}}{r!}\Bigr|=0,
	\feq
	for any integer $r\geq 0.$ 
	\end{theorem}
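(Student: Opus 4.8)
The plan is to realize $X_n$ as a sum of dependent indicators and apply the Chen--Stein bound of Theorem~\ref{aggth}. Write $X_n=\sum_{i=1}^{N}X_{n,i}$, where $N=\binom{n}{\ell_n}$, the index $i$ enumerates the $\ell_n$-subsets $I_i$ of positions of $[n]$ exactly as in the proof of Theorem~\ref{wclt}, and $X_{n,i}$ is the indicator that $v_n$ occurs at $I_i$. Since the letters of $W_n$ are i.i.d.\ uniform on $[k_n]$, each $X_{n,i}$ is Bernoulli with the same success probability $p=\binom{k_n}{d_n}k_n^{-\ell_n}$, and $X_{n,i}$ depends only on the letters indexed by $I_i$; hence $X_{n,i}$ is independent of the $\sigma$-algebra generated by $\{X_{n,j}:I_j\cap I_i=\emptyset\}$. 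I would therefore invoke Theorem~\ref{aggth} with $Y_i=X_{n,i}$, $\lambda=Np=\mu_n$, and dependency neighborhood $D_i=\{j:I_j\cap I_i\neq\emptyset\}$, so that $Q_n$ is Poisson with parameter $\mu_n$, as required.

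Before estimating $b_1$ and $b_2$ I would record that the hypotheses force $\mu_n\to 0$. Indeed $\binom{k_n}{d_n}k_n^{-\ell_n}\le k_n^{d_n-\ell_n}/d_n!\le 1/d_n!$ because $d_n\le\ell_n$, so $\mu_n\le\binom{n}{\ell_n}/d_n!\le n^{\ell_n}/(\ell_n!\,d_n!)$; a Stirling estimate together with $d_n\ge\delta'\ell_n$ for any $\delta'<\delta$ (valid for large $n$) and $\ell_n\ge An^\beta$ gives, to leading order, $\log\mu_n\lesssim \ell_n\log n\,[\,1-\beta(1+\delta)\,]+O(\ell_n)$, which tends to $-\infty$ since $\beta>\tfrac{1}{1+\delta}$, a consequence of $\tfrac{2}{2+\delta}>\tfrac{1}{1+\delta}$. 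The bound on $b_1$ is then immediate: $|D_i|=\binom{n}{\ell_n}-\binom{n-\ell_n}{\ell_n}\le N$, whence $b_1=Np^2|D_i|\le (Np)^2=\mu_n^2\to 0$.

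The crux is the estimate of $b_2=\sum_i\sum_{j\in D_i\setminus\{i\}}p_{ij}$, with $p_{ij}=E(X_{n,i}X_{n,j})$. I would bound the pairwise probability by conditioning on the shared letters: if $|I_i\cap I_j|=s$, then fixing an occurrence on $I_i$ pins down the $s$ common letters, and the chance of simultaneously realizing $v_n$ on $I_j$ is at most $\binom{k_n}{d_n}k_n^{-(\ell_n-s)}=p\,k_n^{s}$; combined with the trivial marginal bound this yields $p_{ij}\le\min\{p,\,p^2k_n^{s}\}$. Since the number of $j$ with $|I_i\cap I_j|=s$ is $\binom{\ell_n}{s}\binom{n-\ell_n}{\ell_n-s}$, summation over $i$ gives
\[
b_2\le N\sum_{s=1}^{\ell_n-1}\binom{\ell_n}{s}\binom{n-\ell_n}{\ell_n-s}\,\min\bigl\{p,\,p^2k_n^{s}\bigr\}.
\]
The main obstacle is the asymptotic evaluation of this sum. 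In contrast with the fixed-dimension variance computation of Theorem~\ref{wclt}, where the $s=1$ term dominates, here the factor $k_n^{s}$ pushes the dominant contribution toward large overlaps (near $s\approx\ell_n-d_n$), and the magnitude of that contribution depends delicately on $k_n$. I would split the sum at the threshold $s^\ast$ determined by $p\,k_n^{s^\ast}\approx 1$ (where the two bounds in the minimum cross), argue that the summand is unimodal with peak near $s^\ast$, and estimate the peak term by Stirling. Carrying out this optimization — balancing the growth of $\binom{\ell_n}{s}\binom{n-\ell_n}{\ell_n-s}$ against the decay of $\mu_n$ and $p$, uniformly over the admissible range of $k_n$ — is precisely where condition~(iii) enters: the threshold $\beta>\tfrac{2}{2+\delta}$ is exactly what forces $b_2\to 0$.

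With $b_1+b_2\to 0$, Theorem~\ref{aggth} yields $d_{TV}(X_n,Q_n)\le 2(b_1+b_2)\to 0$, the first assertion of the theorem. The second follows at once: by \eqref{fgP} we have $P(X_n=r)=k_n^{-n}f_r^{v_n}(k_n,n)$ while $P(Q_n=r)=\mu_n^re^{-\mu_n}/r!$, so for each fixed $r\geq 0$
\[
\Bigl|\tfrac{f_r^{v_n}(k_n,n)}{k_n^n}-\tfrac{\mu_n^re^{-\mu_n}}{r!}\Bigr|=\bigl|P(X_n=r)-P(Q_n=r)\bigr|\le d_{TV}(X_n,Q_n)\longrightarrow 0 .
\]
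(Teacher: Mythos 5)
Your setup coincides with the paper's own: the same indicators $X_{n,i}$, the same application of Theorem~\ref{aggth} with $\lambda=N p=\mu_n$, the same bound $b_1\le \mu_n^2$ (together with an essentially correct verification that $\mu_n\to 0$, using $\beta>\frac{2}{2+\delta}>\frac{1}{1+\delta}$), and the same reduction of the second assertion to $d_{TV}(X_n,Q_n)\to 0$. The genuine gap is that the heart of the theorem --- proving $b_2\to 0$ under conditions (i)--(iii) --- is announced but never carried out. You write that you ``would split the sum at the threshold $s^{\ast}$,'' ``argue that the summand is unimodal with peak near $s^{\ast}$,'' and ``estimate the peak term by Stirling,'' and then simply assert that $\beta>\frac{2}{2+\delta}$ ``is exactly what forces $b_2\to 0$.'' That assertion is the entire content of the theorem; without the computation the proof is incomplete. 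Moreover, the plan as sketched is doubtful: the combinatorial weight $\binom{\ell_n}{s}\binom{n-\ell_n}{\ell_n-s}$ is a hypergeometric-type factor with mode near $s\approx \ell_n^2/n$, while the crossover point $s^{\ast}$ of $\min\{p,\,p^2k_n^{s}\}$ depends on $k_n$, which is bounded below by $d_n$ but unbounded above; the product need not be unimodal, its peak need not sit near $s^{\ast}$, and any peak analysis must be uniform over the admissible $k_n$ --- none of this is established.

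A second, related weakness lies in the pairwise bound itself. Your estimate $p_{ij}\le\min\{p,\,p^2k_n^{s}\}$ is valid, but in the large-overlap regime $s>\ell_n-d_n$ the quantity $p^2k_n^{s}=p\binom{k_n}{d_n}k_n^{-(\ell_n-s)}$ exceeds $p$ once $k_n$ is large, so there your bound degenerates to the trivial $p_{ij}\le p$. The paper instead keeps the sharper conditional bound $\binom{k_n}{(\ell_n-s)\wedge d_n}k_n^{-(\ell_n-s)}\le \frac{1}{((\ell_n-s)\wedge d_n)!}\le \frac{1}{d_n!}+\frac{1}{(\ell_n-s)!}$, which is uniform in $k_n$; the factor $\frac{1}{(\ell_n-s)!}$ is precisely what tames the large-overlap terms. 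With it, Vandermonde's identity converts the sum over overlaps into $\mu_n K_n E\bigl(\frac{1}{\Lambda_n!}\bigr)$ for a hypergeometric variable $\Lambda_n$ concentrated near $\ell_n$ (Hoeffding's inequality), and a single Stirling computation yields $\log(b_1+b_2)\le 2\ell_n\log n-(2+\gamma)\ell_n\log\ell_n+O(n)\to-\infty$ for any $\gamma<\delta$ with $(2+\gamma)\beta>2$ --- this is exactly where (ii) and (iii) enter. To repair your argument you must either carry out the split-and-peak optimization rigorously and uniformly in $k_n$, or replace $\binom{k_n}{d_n}$ in the conditional bound by $\binom{k_n}{(\ell_n-s)\wedge d_n}$ and exploit $\binom{k}{m}k^{-m}\le\frac{1}{m!}$ as the paper does.
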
	
	\begin{remark}
	\label{rwpois}
	We believe that the lower bound for $\beta$ in the statement of the theorem is an artifact of the proof and can be improved. 
	In the most favorable to us case $\delta=1,$ the conditions of the theorem require $\beta>\frac{2}{3}.$ This is compared to the lower bound 
	$\beta>\frac{1}{2}$ obtained in \cite{crane} for permutations. 
	\end{remark}
	\begin{proof}[Proof of Theorem~\ref{wpois}]
	Fix any $n\in\nn,$ and let $K_n=\binom{n}{\ell_n}$ and $p_n=E(X_{n,j})=\frac{1}{k_n^{\ell_n}}\binom{k_n}{d_n}$ for this particular value of $n.$ Note that $\mu_n=E(X_n)=K_np_n.$ Recall the intervals $I_j$ from the proof of Theorem~\ref{wclt}, assuming that $k=k_n$ and $\ell=\ell_n,$ define for $j\in [N],$
	\beq
	Y_j=X_{n,j}\qquad \mbox{\rm and} \qquad D_j=\{m\in [K_n]:I_j\cap I_m=\emptyset\}.
	\feq
	Let $(\ell_n-i) \wedge d$ denote $\min\{\ell_n-i,d_n\}.$ Observe that if $I_j\cap I_m=i,$ then 
	\beq
	E(Y_jY_m)&=&E\bigl(Y_jE(Y_m|Y_j)\bigr)\leq E\left[Y_j\frac{1}{k_n^{\ell_n-i}}\binom{k_n}{(\ell_n-i) \wedge d_n}\right]
	\\
	&=& \frac{1}{k_n^{\ell_n} k_n^{\ell_n-i}}\binom{k_n}{d_n}\binom{k_n}{(\ell_n-i)\wedge d_n}. 
	\feq
	Therefore, for $b_1$ and $b_2$ introduced in \eqref{b12} we have: 
	\beq
	b_1=K_n\Delta_n p_n^2\leq (K_np_n)^2=\mu_n^2,
	\feq 
	where $\Delta_n$ is defined in \eqref{amount1}, and 
	\beq 
	b_2&\leq&  \frac{1}{k_n^{\ell_n}}\binom{k_n}{d_n}  \sum_{i=1}^{\ell_n-1} \binom{n}{2\ell_n-i}
	\binom{2\ell_n-i}{\ell_n}\binom{\ell_n}{i}\binom{k_n}{(\ell_n-i) \wedge d_n}\frac{1}{k_n^{\ell_n-i}}
	\\
	&=&
	 \frac{1}{k_n^{\ell_n}}\binom{k_n}{d_n} \binom{n}{\ell_n} \sum_{i=1}^{\ell_n-1} 
	\binom{n-\ell_n}{\ell_n-i}\binom{\ell_n}{i}\binom{k_n}{(\ell_n-i) \wedge d_n}\frac{1}{k_n^{\ell_n-i}}.
	\feq
	Therefore,
	\beq
	b_1+b_2 \leq
	\mu_n^2+ \mu_n\sum_{i=0}^{\ell_n}
	\binom{n-\ell_n}{\ell_n-i}\binom{\ell_n}{i}\binom{k_n}{(\ell_n-i) \wedge d_n}\frac{1}{k_n^{\ell_n-i}}.
	\feq 
	Since 
	\beq
	\binom{k_n}{(\ell_n-i) \wedge d_n}\frac{1}{k_n^{\ell_n-i}}
	\leq \frac{1}{((\ell_n-i) \wedge d_n)!}\leq \frac{1}{d_n!}+\frac{1}{(\ell_n-i)!},
	\feq
	we obtain that 
	\beq
	b_1+b_2\leq
	\mu_n^2+\frac{\mu_n}{d_n!}\binom{n}{\ell_n}+ \mu_n\sum_{m=0}^{\ell_n} \binom{n-\ell_n}{m} \binom{\ell_n}{\ell_n-m}\frac{1}{m!},
	\feq 
	where we used Vandermonde's identity for the second term and change of variables $m=\ell_n-i$ for the third one. 
	Since 
	\beq
	\mu_n=\frac{K_n}{k_n^{\ell_n}}\binom{k_n}{d_n}\leq \frac{K_n}{k_n^{d_n}}\binom{k_n}{d_n}\leq \frac{K_n}{d_n!},
	\feq 
	we obtain that 
	\beq
	b_1+b_2\leq 2\Bigl(\frac{K_n}{d_n!}\Bigr)^2+\mu_nK_n E\Bigl(\frac{1}{\Lambda_n!}\Bigr),
	\feq
	where $\Lambda_n$ is a random variable with hypergeometric distribution, $P(\Lambda_n=m)=\frac{\binom{n-\ell_n}{m} \binom{\ell_n}{\ell_n-m}}{\binom{n}{\ell_n}}$ 
	for $m=0,\ldots,\ell_n.$ By Hoeffding's inequality for partial sums of bounded random variables, 
	\beq
	P\Bigl(\Lambda_n-\ell_n\frac{n-\ell_n}{n}\leq -\veps\ell_n \Bigr) \leq e^{-2\veps^2\ell_n}
	\feq 
	for any $\veps>0.$ Thus for any given $\veps>0$ and $n$ large enough, 
	\beq
	P\bigl(\Lambda_n\leq (1-2\veps)\ell_n \bigr) \leq e^{-2\veps^2\ell_n}.
	\feq 
	Therefore, for all an arbitrary $\veps>0$ and all $n$ large enough, 
	\beq
	b_1+b_2&\leq& 2\Bigl(\frac{K_n}{d_n!}\Bigr)^2+\mu_n K_n e^{-2\veps^2\ell_n}+ \frac{\mu_n K_n}{\Gamma((1-2\veps)\ell_n)}
	\leq 2\Bigl(\frac{K_n}{d_n!}\Bigr)^2+2\mu_n K_n e^{-2\veps^2\ell_n}
	\\
	&\leq&
	2\Bigl(\frac{K_n}{d_n!}\Bigr)^2+2\frac{K_n^2 e^{-2\veps^2\ell_n}}{d_n!}\leq \frac{4K_n^2 e^{-2\veps^2\ell_n}}{d_n!},
	\feq
	where $\Gamma(\,\cdot\,)$ is the gamma function. Finally, using Stirling's formula we obtain that 
	\beq
	&& \log \Bigl(\frac{4K_n^2 e^{-2\veps^2\ell_n}}{d_n!}\Bigr)
	\\
	&&
	\quad =2\bigl\{n\log n-(n-\ell_n)\log(n-\ell_n)-\ell_n\log \ell_n -\veps^2\ell_n\bigr\}-d_n\log d_n+d_n+O(n)
	\\
	&&
	\quad =2\Bigl\{\ell_n\log \frac{n}{\ell_n}-(n-\ell_n)\log\Bigl(1-\frac{\ell_n}{n}\Bigr)\Bigr\}-d_n\log d_n+O(n)
	\\
	&&
	\quad =2\ell_n\log \frac{n}{\ell_n}-d_n\log d_n+O(n).
	\feq
	By the conditions of the theorem, $\delta=\liminf_{n\to\infty}\frac{d_n}{\ell_n}>0.$ 
	Therefore, for any $\gamma\in (0,\delta)$ and $n$ large enough we have:
	\beq
	\log(b_1+b_2)\leq \log \Bigl(\frac{4K_n^2 e^{-2\veps^2\ell_n}}{d_n!}\Bigr) \leq 2\ell_n\log n-(2+\gamma)\ell_n\log\ell_n+O(n).
	\feq
	The proof of the theorem is complete. 
	\end{proof}
		
	\section{Permutation patterns}
	\label{astrid}
	In this section, we discuss an extension of some of our results about counting occurrences of a pattern in words to permutations.
	The section is divided into two subsections. Subsection~\ref{ldpsw} is devoted to Stanley-Wilf type limits for permutations, and
	Section~\ref{wa-gf} adapts the concept of weak avoidance to permutations. The main results of this section are Theorem~\ref{thm:fnrlim} and Proposition~\ref{th1}. The latter is a counterpart of Proposition~\ref{th1words} and the former is a modification for permutations of Theorem~\ref{thm:fnrlim_word}. Extensions of the CLT-related results in Section~\ref{new} to random permutations are readily available due to the CLT for permutations proved by B\'{o}na in \cite{bona}. This is briefly discussed in the concluding paragraph of Section~\ref{wa-gf}, the details are left to the reader.
	\par
	We begin with notation. Permutations are bijections from a set $[n]$ to itself. 
	For $n\in\nn,$ let $S_n$ denote the symmetric group of order $n,$ the group of permutations of the integers in $[n]$.
	Occasionally, when confusion is not likely to occur, we will identify permutations in $S_n$ with the words representing
	the image of the permutation. For instance, for permutations $\pi=\pi(1)\cdots\pi(n)\in S_n$ and
	$\nu=\nu(1)\cdots\nu(m)\in S_m$ we refer to the permutation $$\pi\nu:=\pi(1)\cdots\pi(n)\nu(1)\cdots\nu(m)\in S_{n+m}$$
	as the concatenation of the permutations $\pi$ and $\nu.$
	\par
	Fix any $k\in \nn$ and $\xi\in S_k.$ We refer to $\xi$ as a pattern, it remains fixed throughout the rest of the paper.
	For a permutation $\pi\in S_n$ with $n\geq k,$ an occurrence of the pattern $\xi$ in $\pi$ is a sequence of $k$ indices
	$1\leq i_1<i_2<\dots<i_k\leq n $ such that the word $\pi(i_1)\cdots \pi(i_k)\in [n]^k$ is order-isomorphic to the word $\xi,$
	that is
	\beq
	\pi(i_p)<\pi(i_q)\Longleftrightarrow  \xi_p<\xi_q\qquad \forall\,1\leq p,q\leq k.
	\feq
	For a permutation $\pi\in S_n$ with $n\geq k$ we denote by $occ_\xi(\pi)$ the number of occurrences of the pattern $\xi$ in $\pi.$ For example,
	if $\xi=12$ and $\pi=51324,$ then $13,$ $12,$ $14,$ $34,$ and $24$ are order-isomorphic to $12,$ and $occ_\xi(\pi)=5.$ If $occ_\xi(\pi)=m,$ we
	say that $\pi$ contains $\xi$ (exactly) $m$ times. For a given $r\in \nn_0,$ let $f_r^\xi(n)$ denote the number of permutations in $S_n$ that contain $\xi$ exactly $r$ times. That is,
	\beq
	f^\xi_r(n) = \#\{ \pi\in S_n : occ_\xi(\pi) = r \} ,\qquad r\geq 0.
	\feq
	For example, if $\xi=12$ then $f_0^\xi(3)=1$ (only $321$ counts), $f_1^\xi(3)=2$ ($312$ and $231$ count),
	$f_2^\xi(3)=2$ ($132$ and $213$ count), and $f_3^\xi(3)=1$ (only $123$ counts).
	\par 
	As in Section~\ref{astrid}, $a_n\sim b_n,$ $a_n=O(b_n)$ and $a_n=o(b_n)$ for sequences $a_n$ and $b_n$ with elements that might depend on $k,r,\xi$ and 
	other parameters, means that, respectively, $\lim_{n\to\infty}\frac{a_n}{b_n}=1,$ $\limsup_{n\to\infty}\bigl|\frac{a_n}{b_n}\bigr|<\infty,$ 
	and $\lim_{n\to\infty}\frac{a_n}{b_n}=0$ for all feasible values of the parameters when the latter are fixed. The notation 
	$a_n=\Theta(b_n)$ is used to indicate that both $a_n=O(b_n)$ and $b_n=O(a_n)$ hold true.   
	\subsection{Stanley-Wilf type limits}
	\label{ldpsw}
	The celebrated Stanley-Wilf conjecture proved in \cite{MT} states that $\lim_{n\to\infty}\frac{1}{n}\log f_0^\xi(n)$ exists
	and belongs to $(0,\infty).$ For $\pi\in S_n,$ let $Z_n=occ_\xi(\pi),$
	where $\pi$ is a permutation chosen at random uniformly over $S_n.$ Notice that
	\beq
	P(Z_n=r)=\frac{f_r^\xi(n)}{n!},\qquad r\in\nn_0.
	\feq 	
	In the language of random permutations, the Stanley-Wilf limit is   	
	\beq
	\lim_{n\to\infty}\frac{1}{n}\log [n!P(Z_n=0)]=\lim_{n\to\infty}\Bigl(\frac{1}{n}\log P(Z_n=0)+\log n -1\Bigr),
	\feq
	which yields the following weaker conclusion:
	\beq
	\lim_{n\to\infty}\frac{1}{n\log n}\log P(Z_n=0)=-1.
	\feq
	Thus the limit can be interpreted in terms of the asymptotic behavior of $P(Z_n=0)$ as a local large deviation result with respect
	to the scaling sequence $n\log n.$ The probability $P(Z_n=0)$ is very small since according to the CLT obtain by B\'{o}na in \cite{bona},
	$Z_n$ is tightly concentrated around $E(Z_n)=\frac{1}{k!}\binom{n}{k}.$ The following theorem extends this large deviation result to $P(Z_n=r)$ with an arbitrary fixed $r\in\nn.$
	\begin{theorem}
	\label{thm:fnrlim}
	For any $r\in\nn,$ $\lim_{n\to\infty}(f_r^\xi(n))^{\frac{1}{n}}$ exists and is equal to $\lim_{n\to\infty} (f_0^\xi(n))^{\frac{1}{n}}.$
	\end{theorem}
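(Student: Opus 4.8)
The plan is to sandwich $(f_r^\xi(n))^{1/n}$ between the two bounds $\limsup_{n\to\infty}(f_r^\xi(n))^{1/n}\le L$ and $\liminf_{n\to\infty}(f_r^\xi(n))^{1/n}\ge L$, where $L:=\lim_{n\to\infty}(f_0^\xi(n))^{1/n}$ denotes the Stanley--Wilf limit, which exists and is finite by \cite{MT}. These two inequalities together force $(f_r^\xi(n))^{1/n}\to L$, so they are all that is required. Write $k=|\xi|\ge 2$, and note the elementary fact that, since $(f_0^\xi(m))^{1/m}\to L$ and $(n-j)/n\to 1$, we also have $(f_0^\xi(n-j))^{1/n}\to L$ for every fixed $j$; this will be used to pass $n$-th roots through a bounded shift of the index.

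For the upper bound I would argue by deletion. If $\pi\in S_n$ satisfies $occ_\xi(\pi)=r$, then selecting one index from each of the $r$ occurrences produces a set $S$ of at most $r$ positions whose removal destroys every occurrence; after standardizing, the resulting permutation $\pi'\in S_{n-|S|}$ avoids $\xi$, because deletion can never create a new occurrence. Thus every such $\pi$ is recovered from some $\xi$-avoiding $\pi'$ of size $n-j$, $0\le j\le r$, by reinserting $j$ entries, and the number of insertion data (a choice of $j$ new positions and $j$ new values among $n$) is at most $n^{2j}\le n^{2r}$. Hence $f_r^\xi(n)\le (r+1)\,n^{2r}\max_{0\le j\le r}f_0^\xi(n-j)$, and taking $n$-th roots gives $\limsup_{n\to\infty}(f_r^\xi(n))^{1/n}\le L$, since $\bigl((r+1)n^{2r}\bigr)^{1/n}\to 1$.

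For the lower bound I would manufacture many permutations with exactly $r$ occurrences by gluing a fixed gadget onto a $\xi$-avoider, the delicate point being the suppression of \emph{cross} occurrences. For $\alpha\in S_a,\beta\in S_b$ let $\alpha\oplus\beta\in S_{a+b}$ place $\beta$ above and to the right of $\alpha$, and let $\alpha\ominus\beta$ place $\beta$ below and to the right. An occurrence of $\xi$ meeting two distinct blocks of a direct (resp. skew) sum forces a sum-decomposition point (resp. skew-decomposition point) of $\xi$, i.e. an index $j\in\{1,\dots,k-1\}$ with $\xi(\{1,\dots,j\})=\{1,\dots,j\}$ (resp. $\xi(\{1,\dots,j\})=\{k-j+1,\dots,k\}$). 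The crucial lemma is that no $\xi$ of size $k\ge 2$ admits both types of point: if $j$ were a sum point and $j'$ a skew point, then the value $1$ lies in positions $1,\dots,j$ but not in $1,\dots,j'$, giving $j'<j$, while the value $k$ lies in positions $>j$ but within $1,\dots,j'$, giving $j<j'$ --- a contradiction. Consequently $\xi$ is sum-indecomposable or skew-indecomposable.

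In the sum-indecomposable case set $\sigma=\xi\oplus\cdots\oplus\xi$ ($r$ copies). In the multi-block permutation $\pi\oplus\sigma=\pi\oplus\xi\oplus\cdots\oplus\xi$, any occurrence meeting two distinct blocks would induce a sum-decomposition point of $\xi$, which is impossible; hence every occurrence lies inside a single block, giving $occ_\xi(\pi\oplus\sigma)=occ_\xi(\pi)+r\cdot 1=r$ whenever $\pi$ avoids $\xi$. Since $\pi\mapsto\pi\oplus\sigma$ is injective on $\xi$-avoiders $\pi\in S_{n-rk}$, we obtain $f_r^\xi(n)\ge f_0^\xi(n-rk)$; the skew-indecomposable case is identical with $\ominus$ in place of $\oplus$. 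Taking $n$-th roots yields $\liminf_{n\to\infty}(f_r^\xi(n))^{1/n}\ge L$, which closes the argument. I expect the main obstacle to be precisely this control of cross occurrences: the sum/skew dichotomy lemma is what makes a single clean construction work for \emph{every} pattern, and it simultaneously settles existence by exhibiting an explicit $\sigma$ with $occ_\xi(\sigma)=r$, guaranteeing $f_r^\xi(n)>0$ for all large $n$.
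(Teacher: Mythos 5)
Your proof is correct, and although it runs on the same two mechanisms as the paper's argument --- deletion for the upper bound, gluing value-shifted copies of $\xi$ for the lower bound --- both steps are executed in a genuinely different way. The paper proceeds by induction on $r$: for the upper bound it deletes a single entry (the leftmost letter of the leftmost occurrence), obtaining $f_m^\xi(n)\le n\sum_{j=0}^{m-1}f_j^\xi(n-1)$, and for the lower bound it appends one shifted copy of $\xi$ to a permutation with exactly $m-1$ occurrences, giving $f_{m-1}^\xi(n)\le f_m^\xi(n+k)$; cross occurrences across the concatenation are excluded not by your dichotomy lemma but by a symmetry normalization borrowed from Arratia \cite{Arr}: replacing $\xi$ by its complement if necessary, one assumes the letter $k$ precedes $1$ in $\xi$, which forbids any sum-decomposition point and hence any straddling occurrence. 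Your version eliminates the induction entirely. Deleting a transversal of all $r$ occurrences at once and counting reinsertion data, together with the $r$-fold sum (or skew-sum) construction, yields the two-sided sandwich
\begin{equation*}
f_0^\xi(n-rk)\;\le\; f_r^\xi(n)\;\le\;(r+1)\,n^{2r}\max_{0\le j\le r}f_0^\xi(n-j),
\end{equation*}
which is strictly more information than the theorem asserts, since it pins $f_r^\xi$ to $f_0^\xi$ up to polynomial factors. Your sum/skew-indecomposability dichotomy (no pattern of size $k\ge 2$ admits both a sum- and a skew-decomposition point, via the contradiction $j'<j<j'$) is a clean intrinsic substitute for the complementation trick: note that the paper's normalization ``$k$ precedes $1$'' is precisely a sufficient condition for sum-indecomposability, so the two devices play the same role, yours avoiding the appeal to symmetry of $f_r^\xi$ under complementation at the cost of proving the extra lemma. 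The individual steps all check out: transversal deletion cannot create occurrences, $occ_\xi(\xi)=1$ because the only length-$k$ subsequence of $\xi$ is $\xi$ itself, an occurrence meeting two blocks of a direct (skew) sum does force a sum (skew) decomposition point, and the passage $(f_0^\xi(n-j))^{1/n}\to L$ for fixed $j$ is legitimate since $L<\infty$ by \cite{MT} and $f_0^\xi\ge 1$.
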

	\begin{proof}
	The proof by induction on $r$. By Corollary~2 in \cite{MT}, $c:=\lim_{n\to \infty} \bigl(f_0^\xi(n)\bigr)^{\frac{1}{n}}$ exists and is finite.
	Assume that for some $m\in\nn$ the claim holds for $r=0,1,\ldots,m-1.$ To complete the proof, we need to show that under this assumption it holds also
	for $r=m.$
	\par
	To this end, let $\pi$ be an arbitrary permutation in $S_n$ that contains the pattern $\xi$ exactly $m$ times. By removing the leftmost letter in the leftmost occurrence of $\xi$ in $\pi$ and renaming the remaining letters, we obtain a permutation $\pi'$ in $S_{n-1}$ that contains $\xi$
	at most $m-1$ times. Thus,
	\beq
	f_m^\xi(n)\leq n\sum_{j=0}^{m-1}f_j^\xi(n-1).
	\feq
	It follows that
	\begin{align}
	\limsup_{n\to\infty}(f_m^\xi(n))^{1/n}\leq \lim_{n\to\infty}\Bigl(n\sum_{j=0}^{m-1}
	f_j^\xi(n-1)\Bigr)^{1/(n-1)}=c.
	\label{eqrr1}
	\end{align}
	On the other hand, consider an arbitrary permutation $\pi\in S_n$ that contains $\xi$ exactly $m-1$ times
	and the concatenation $\pi' = \pi \xi' \in S_{n+k},$ where $\xi'$ is obtained by adding $n$ to each letter in $\xi.$ For instance, if $n=5,$
	$\pi=13542,$ and $\xi=12,$ then $\xi'=67$ and $\pi'=1354267.$ Without loss of generality, we may assume that the letter
	$k$ precedes $1$ in $\xi$ (the idea is borrowed from \cite{Arr}). Because of this assumption, the new permutation
	$\pi'$ contains $\xi$ exactly $m$ times. We can therefore conclude that $f_{m-1}^\xi(n)\leq f_m^\xi(n+k).$
	This inequality along with the induction hypothesis imply that
	\beq
	c = \lim_{n\to\infty}(f_{m-1}^\xi(n))^{1/n}\leq
	\liminf_{n\to\infty}\bigl(f_m^\xi(n+k)\bigr)^{1/n} =  \liminf_{n\to\infty}\bigl(f_m^\xi(n)\bigr)^{1/n}.
	\feq
	In view of \eqref{eqrr1}, this completes the proof of the theorem.
	\end{proof}
	\subsection{Weak avoidance of permutation patterns}
	\label{wa-gf}	
	Similarly to \eqref{cw}, with any pattern $\xi\in S^k$ one can associate a sequence of weak avoidance penalty functions
	$c^\xi_n:[0,1]\to [0,n!],$ $n\in\nn,$ by setting
	\beqn
	\label{cw1}
	c^v_{k,n}(x) = \sum_{\pi \in S_n}\, \prod_{1\leq j_1 < \cdots < j_k \leq n} \left( 1 + x V_{j_1,\cdots, j_k} (\xi,\pi) \right),
	\feqn
	where
	\beq
	V_{j_1, \cdots, j_k}(\xi, \pi) =
	\left\{
	\begin{array}{cl}
	-1 & \quad \mbox{\rm if} \quad  \bigl(\,\pi(j_q) < \pi(j_r)  ~\Longleftrightarrow ~ \xi(q)<\xi(r)\quad \forall\,1\leq q,r\leq k\,\bigr)\\
	0 & \quad \mbox{\rm otherwise.}
	\end{array}
	\right.
	\feq
	Notice that $c^\xi_n(0) = n!$ and $c^\xi_n(1) = f_0^\xi(n)$.
	Similarly to \eqref{th2words}, we have
	\beqn
	\label{th3words}
	c^v_n(x) =\sum_{\pi \in S_n} (1-x)^{occ_\xi(\pi)}=\sum_{r\geq 0} f_r^\xi(n)(1-x)^r.
	\feqn
	For certain particular cases the polynomials $c^v_n(1-x),$ generating functions of the sequence $f_r^\xi(n),$ $n\in\nn,$ 
	have been studied in \cite{janson1,nakamura} through the analysis of certain recursive functional equations that they satisfy.
	\par
	The analogue of the $\qq^{v,x}_{k,n}$ measure introduced in \eqref{qword} is the probability measure $\pp^{v,x}_n$ on $S_n$ defined by
	\beq
	\pp^{\xi,x}_n(A)=\frac{1}{c^v_n(x)}\sum_{\pi \in A}(1-x)^{occ_\xi(\pi)},\qquad A\subset S_n.
	\feq
	In the case of inversions, i.\,e. for $\xi=21,$ $\pp^{\xi,x}_n$ is a Mallow's distribution. Mallow's permutations have been
	studied by several authors, see, for instance, recent \cite{crane, mall, pitman} and references therein.
	\par
	The next proposition establishes the existence of $\lim_{n\to\infty}\bigl(c_n^{x}(\xi)\bigr)^{1/n}.$
	The proof is based on a standard sub-additivity argument, and follows the same line of argument as the one in \cite{Arr}.
	Unfortunately, we were unable to verify that the limit is necessarily finite (cf. Proposition~\ref{th1words} together with \eqref{wlima} for words). 	
	\begin{proposition}
	\label{th1}
	$\lim_{n\to\infty}\bigl(c^\xi_n(x)\bigr)^{\frac{1}{n}}$ exists for all $x\in[0,1].$
	\end{proposition}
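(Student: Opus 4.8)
The plan is to realize $c^\xi_n(x)$ as a supermultiplicative sequence and invoke Fekete's lemma, exactly paralleling the standard Stanley--Wilf argument of \cite{Arr} but keeping track of the weights $(1-x)^{occ_\xi}$. First I would reduce to a convenient pattern using symmetry: the reversal map $\pi\mapsto\pi^{r}$, $\pi^{r}(i)=\pi(n+1-i)$, is a bijection of $S_n$ with $occ_\xi(\pi^{r})=occ_{\xi^{r}}(\pi)$, so $c^\xi_n(x)=c^{\xi^{r}}_n(x)$ for every $n$ and the two Stanley--Wilf type limits coincide. Consequently I may assume without loss of generality that the largest letter $k$ precedes the smallest letter $1$ in $\xi$ (replacing $\xi$ by $\xi^{r}$ if necessary). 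This normalization is precisely the device used in the proof of Theorem~\ref{thm:fnrlim}, and it guarantees that $\xi$ is \emph{sum-indecomposable}: if $\xi=\alpha\oplus\beta$ were a direct sum with both parts nonempty, then $1$ would sit in the $\alpha$-block and $k$ in the $\beta$-block, forcing $1$ to precede $k$.

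Next I would establish the supermultiplicative bound $c^\xi_{m+n}(x)\ge c^\xi_m(x)\,c^\xi_n(x)$. For $\sigma\in S_m$ and $\tau\in S_n$ consider the direct sum $\sigma\oplus\tau\in S_{m+n}$, defined by $(\sigma\oplus\tau)(i)=\sigma(i)$ for $i\le m$ and $(\sigma\oplus\tau)(m+j)=m+\tau(j)$ for $1\le j\le n$; the assignment $(\sigma,\tau)\mapsto\sigma\oplus\tau$ is injective. The crucial point is that, since $\xi$ is sum-indecomposable, no occurrence of $\xi$ in $\sigma\oplus\tau$ can use indices from both blocks: such a straddling occurrence would exhibit $\xi$ as a direct sum of the (nonempty) pattern read off the first block and that read off the second, a contradiction. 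Hence every occurrence lies entirely in one block, giving the exact identity $occ_\xi(\sigma\oplus\tau)=occ_\xi(\sigma)+occ_\xi(\tau)$. Multiplying the weights and summing over all pairs then yields
\[
c^\xi_{m+n}(x)=\sum_{\pi\in S_{m+n}}(1-x)^{occ_\xi(\pi)}\ \ge\ \sum_{\sigma\in S_m}\sum_{\tau\in S_n}(1-x)^{occ_\xi(\sigma)+occ_\xi(\tau)}=c^\xi_m(x)\,c^\xi_n(x),
\]
where the inequality uses that the image of the direct-sum map is a subset of $S_{m+n}$ and all summands are nonnegative.

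Finally, since $c^\xi_n(x)\ge f_0^\xi(n)\ge 1$, the sequence $a_n:=\log c^\xi_n(x)$ is nonnegative and superadditive, so Fekete's subadditivity lemma (in its superadditive form) shows that $\lim_{n\to\infty}\frac1n\log c^\xi_n(x)=\sup_{n}\frac1n\log c^\xi_n(x)$ exists in $[0,+\infty]$, and hence $\lim_{n\to\infty}\bigl(c^\xi_n(x)\bigr)^{1/n}$ exists for every $x\in[0,1]$. I expect the main obstacle to be not the convergence itself but the impossibility of bounding the limit from above: at $x=0$ one has $c^\xi_n(0)=n!$, so the limit is genuinely $+\infty$, and for $x\in(0,1)$ the weights $(1-x)^{occ_\xi}$ do not decay fast enough to force a finite value through this argument. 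This is exactly the gap flagged in the remark preceding the proposition, and closing it (deciding finiteness for $x$ bounded away from $1$) appears to require a genuinely different, non-subadditive input.
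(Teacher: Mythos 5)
Your proof is correct and is essentially the paper's own argument: the set $S^m_{n+m}$ in the paper's proof is exactly the image of your direct-sum map, the normalization that $k$ precedes $1$ in $\xi$ (which the paper asserts without proof, crediting \cite{Arr}, and which you justify via reversal symmetry) rules out straddling occurrences in both versions, and both conclude via supermultiplicativity of $c^\xi_n(x)$ and Fekete's lemma, with the limit possibly $+\infty$. Your closing observation also matches the paper, which explicitly states it could not verify finiteness of the limit and poses it as a conjecture.
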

	\begin{proof}
	For $\pi\in S_n$ and $i,j\in\nn$ such that $1\leq i<j\leq n,$ let
	\beq
	\pi_{i,j}=\pi_i(i)\cdots \pi_i(j),\qquad \mbox{\rm where}~\pi_i(r):=\pi(r)-i+1.
	\feq
	That is $\pi_{i,j}\in [n]^{j-i+1}$ and $\pi_{i,j}(r)=\pi(i-1-r)-(i-1)$ for all $r\in[n].$
	Further, for any $m,n\in\nn$ such that $m\leq n$ let
	\beq
	S_n^m=\{\pi\in S_n:\pi_{1,m}\in S_m\}.
	\feq
	Note that $\pi\in S_n^m$ implies $\pi_{m+1,n}\in S_{n-m}.$ In other words,
	\beqn
	\label{bij}
	\pi\to (\pi_{1,j},\pi_{j+1,n})~\mbox{\rm is a bijection between}~S^j_n~\mbox{\rm and}~S_j\times S_{n-j}.
	\feqn
	Without loss of generality, we can and will assume that $\xi^{-1}(k)<\xi^{-1}(1),$ that is $k$ appears before $1$ in $\xi.$
	Under this assumption, we have
	\beqn
	\label{impa}
	\bigl\{\pi\in S_n^m, j_1\leq m,j_k>m\}\Longrightarrow U_{j_1,\cdots, j_k} (\xi,\pi)=0.
	\feqn
	In view of \eqref{bij} and \eqref{impa}, for any $n,m\in\nn$ and $x\in [0,1]$ we have
	\beq
	&&
	c_{n+m}^\xi(x) = \sum_{\pi \in S_{n+m}}\, \prod_{1\leq j_1 < \cdots < j_k \leq n+m} \bigl( 1 + x U_{j_1,\cdots, j_k} (\xi,\pi)\bigr)
	\\
	&&
	\quad
	\geq  \sum_{\pi \in S_{n+m}^m}\, \prod_{1\leq j_1 < \cdots < j_k \leq n+m} \bigl( 1 + x U_{j_1,\cdots, j_k} (\xi,\pi) \bigr)
	\\
	&&
	\quad = \sum_{\pi_1 \in S_m}\, \prod_{1\leq j_1 < \cdots < j_k \leq m}\bigl( 1 + x U_{j_1,\cdots, j_k} (\xi,\pi_1) \bigr)
	\sum_{\pi_2 \in S_n}\, \prod_{1\leq j_1 < \cdots < j_k \leq n} \bigl( 1 + x U_{j_1,\cdots, j_k} (\xi,\pi_2) \bigr)
	\\
	&&
	\quad
	=c^\xi_m(x)c^\xi_n(x).
	\feq
	Hence, $-\log c^\xi_n(x),$ $n\in\nn,$ is a subadditive sequence, and by Fekete's subadditive lemma,
	$\lim_{n \to \infty} \bigl(c^\xi_n(x)\bigr)^{\frac{1}{n}}$ exists for all $x \in [0,1].$
	\end{proof}
	\begin{example}
	\label{mahonian}	
	Consider $\xi=21.$ Then the number of occurrences of $\xi$ in a permutation $\pi$ is the number of inversions in $\pi,$
	and $f_r^{21}(n)$ are Mahonian numbers \cite{Bbook}. The identity in \eqref{th3words} together with Netto's formula for the generating function of the
	sequence $\{f_r^{21}(n):r\geq 0\}$ (see, for instance, \cite[p.~43]{Bbook} or \cite[Seq A008302]{Slo}) give
	$c_n^{21}(x)=\prod_{j=1}^n\frac{1-(1-x)^j}{x}.$ In particular, $\lim_{n\to\infty}\bigl(c_n^{21}(x)\bigr)^{1/n}=x^{-1}$ for all $x\neq0$. Note that
	$f_0^{21}(n)=1$  for all $n\in\nn,$ and hence by virtue of Theorem~\ref{thm:fnrlim}, $\lim_{n\to\infty} \bigl(f_r^{21}(n)\bigl)^{1/n}=1$ for all $r\in\nn.$
	Interestingly enough, in contrast to Example~\ref{inv}, the asymptotic behavior of $c_n^{21}(x_n)$ for a sequence $x_n$ such that $x_n\sim 1$ as $n\to\infty,$ does depend on the rate of convergence of $x_n.$
	\end{example}
	\begin{conj*}
	\label{c3}
	$\lim_{n \to \infty} \bigl(c^\xi_n(x)\bigr)^{\frac{1}{n}}<\infty$ for all patterns $\xi\in \cup_k S_k$ and all $x\in (0,1).$
	\end{conj*}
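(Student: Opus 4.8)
The plan is to supply the upper bound that Proposition~\ref{th1} leaves open: that result gives existence of $\lim_{n\to\infty}(c^\xi_n(x))^{1/n}$ via the supermultiplicative inequality $c^\xi_{n+m}(x)\geq c^\xi_n(x)c^\xi_m(x)$, which controls the limit only from below, so everything reduces to producing an exponential upper bound $c^\xi_n(x)\leq C^n$. I would attack $c^\xi_n(x)=\sum_{r\geq 0}f_r^\xi(n)(1-x)^r$ termwise, seeking a bound on $f_r^\xi(n)$ uniform enough in $r$ to beat the geometric weight $(1-x)^r$. The natural structural device is a defect-deletion argument: if $occ_\xi(\pi)=r$, then choosing one index from each occurrence produces a set $S$ of at most $r$ positions whose removal destroys every occurrence, so the restriction of $\pi$ to $[n]\setminus S$ is a $\xi$-avoiding permutation of size $n-|S|$, and $\pi$ is recovered from it by reinserting $|S|\leq r$ marked points. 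Combining this with Marcus--Tardos (Corollary~2 in \cite{MT}), which together with supermultiplicativity of $f_0^\xi$ gives $f_0^\xi(m)\leq c^m$ for $c=\lim_m(f_0^\xi(m))^{1/m}$, would yield an estimate of the shape $f_r^\xi(n)\leq c^n\sum_{s=0}^r N(n,s)$, where $N(n,s)$ counts the reinsertions of $s$ points.

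The main obstacle is precisely the size of $N(n,s)$. Reinserting a single point requires specifying both its position and its rank, so $N(n,s)$ is of order $n^{2s}$, and the resulting bound $f_r^\xi(n)\lesssim c^n n^{2r}$ is far too weak: substituted into the partition function it gives $\sum_r c^n n^{2r}(1-x)^r$, whose terms already grow with $r$ once $n^2(1-x)>1$, i.e. for every large $n$. Splitting the sum at a threshold $R$ does not rescue it, because forcing the tail $\sum_{r>R}f_r^\xi(n)(1-x)^r\leq n!\,(1-x)^R$ to be exponentially small demands $R=\Theta(n\log n)$, at which point the head already contributes $n^{2R}=e^{\Theta(n(\log n)^2)}$. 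In short, the polynomial-in-$n$ cost of each reinserted defect, accumulated over a number of defects that must be allowed to grow with $n$, swamps the exponential damping. This is exactly where the permutation argument parts ways with the word case: in Proposition~\ref{th1words} one obtains genuine submultiplicativity because discarding the cross-block factors leaves a product that factorizes over the two blocks, a step that exploits the free product structure of words; for permutations the values are globally constrained, the blocks cannot be decoupled, and only the lower bound of Proposition~\ref{th1} survives.

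To make the approach succeed one would need a qualitatively sharper input, for instance a bound $f_r^\xi(n)\leq B^n a_r$ with $B<\infty$ and $\sum_r a_r(1-x)^r<\infty$, in which the ``entropy'' of $r$ defects is bounded independently of $n$ rather than costing $n^{\Theta(r)}$. A plausible route is to show the defect set $S$ can always be chosen so that each removed point is reinserted within a bounded window, in both position and value, of the avoiding skeleton, reducing $N(n,s)$ to something like $(Ck)^s\binom{n}{s}$; alternatively one could aim for a near-submultiplicative inequality $c^\xi_{n+m}(x)\leq P(n)\,c^\xi_n(x)c^\xi_m(x)$ with subexponential $P$, which would close the gap through Fekete's lemma. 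Either refinement seems to require genuinely new enumerative control over permutations with a prescribed small number of pattern copies. The exactly solvable case $\xi=21$ of Example~\ref{mahonian}, where the explicit product formula yields the finite limit $1/x$, confirms the statement but gives little leverage for general $\xi$, and I expect this uniform-in-$r$ estimation of $f_r^\xi(n)$ to be the decisive difficulty.
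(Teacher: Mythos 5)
The first thing to say is that the statement you were asked to prove is not a theorem of the paper at all: it is posed as an open conjecture, and immediately before stating it the authors write that they ``were unable to verify that the limit is necessarily finite.'' Proposition~\ref{th1} establishes only the \emph{existence} of $\lim_{n\to\infty}\bigl(c^\xi_n(x)\bigr)^{1/n}$, via the supermultiplicative inequality $c^\xi_{n+m}(x)\geq c^\xi_n(x)c^\xi_m(x)$ and Fekete's lemma, which controls the limit from below only; since the trivial upper bound is $c^\xi_n(x)\leq n!$, the limit could a priori be $+\infty$. So there is no proof in the paper to compare yours against, and your proposal --- which, to your credit, candidly concedes it does not close the argument --- leaves the statement exactly where the paper leaves it: unproven. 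The concrete gap is the one you name yourself: the absence of any uniform-in-$r$ estimate of the form $f_r^\xi(n)\leq B^n a_r$ with $B<\infty$ and $\sum_{r\geq 0} a_r(1-x)^r<\infty$.

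Within those limits, your analysis of the obstruction is correct and consistent with the paper's own remarks. The defect-deletion step is sound: a transversal of the $r$ occurrences has size at most $r$, its removal leaves a $\xi$-avoiding skeleton, and Marcus--Tardos combined with the Arratia-type supermultiplicativity of $f_0^\xi$ gives $f_0^\xi(m)\leq c^m$; reinserting $s$ points costs $n^{\Theta(s)}$ (position and rank for each), so one only gets $f_r^\xi(n)\lesssim c^n n^{2r}$, and your computation that this cannot beat the weight $(1-x)^r$ is accurate --- forcing the tail $\sum_{r>R}f_r^\xi(n)(1-x)^r\leq n!\,(1-x)^R$ to be exponentially small requires $R=\Theta(n\log n)$, at which point the head contributes $e^{\Theta(n(\log n)^2)}$. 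Your structural explanation of why the word-case argument of Proposition~\ref{th1words} does not transfer is also the one the paper gives in the paragraph following the conjecture: for words, discarding the cross-block factors (each at most $1$) yields genuine submultiplicativity, whereas for permutations the occurrences across the two blocks are negatively correlated and the inequality reverses, so only the lower bound survives. Your two suggested repairs --- a bounded-window reinsertion bound of the shape $N(n,s)\leq (Ck)^s\binom{n}{s}$, or a near-submultiplicative inequality $c^\xi_{n+m}(x)\leq P(n)\,c^\xi_n(x)c^\xi_m(x)$ with subexponential $P$ --- are reasonable targets, but neither is established here, and either would constitute exactly the new enumerative input the conjecture is asking for.
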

	It is interesting to notice that while for words we have $c_r^v(k,n+m)\leq c_r^v(k,n)c_r^v(k,m),$ the opposite is true for permutations, namely $c_r^\xi(n+m)\geq  c_r^\xi(n)c_r^\xi(m).$ The differences can be explained as follows. For words we have: 
	\beq 
	c_r^v(k,n+m)=E\Bigl[E\bigl[(1-x)^{X_{n+m}}\bigr]\,\Bigl|\,W_n\Bigr]=E\Bigl[(1-x)^{X_n}E\bigl[(1-x)^{X_{n+m}-X_n}\bigr]\,\Bigl|\,W_n\Bigr],
	\feq
	and, since letters can be repeated in words, the conditional expectation is less than the unconditional one $E\bigl[(1-x)^{X_m}\bigr].$ Indeed, any pattern occurrence in the first $n$ letters does not affect the last $m$ letters in $W_{n+m},$ but does increase the probability of having occurrences of the pattern spread over two intervals, $[1,n]$ and $[n+1,n=m].$ It turns out that with permutations, where letters cannot be re-used, the situation is different and the correlation between occurrences of the pattern in the beginning and continuation of a large permutation is negative in contrast to words. \par  
	We conclude with a remark concerning the extension of the results in Section~\ref{new} to permutations.
	The key elements in the proofs in Section~\ref{new} is the specific covariance structure (the dependence graph) of the indicators $X_{n,i}$ and the asymptotic relation $\frac{\mu_n}{\sigma_n}=\Theta(\sqrt{n})$ between the expectation and variance of $X_n.$ B\'{o}na's CLT for permutations \cite{bona} asserts that the key elements are similar for words and permutations, and thus  enables one to carry over the proofs of Corollaries~\ref{bess}, \ref{wsldp}, and \ref{wldp}, Lemma~\ref{cvclt}, and Theorem~\ref{wpois} to permutations nearly verbatim. We leave the details to the reader.
	
	\end{document}